\newcommand{\mainbound}{\frac{1}{8} B^{10}}
\newcommand{\RR}{\mathbb{R}}
\newcommand{\CC}{\mathbb{C}}
\newcommand{\QQ}{\mathbb{Q}}
\newcommand{\ZZ}{\mathbb{Z}}
\newcommand{\End}{\mathrm{End}}
\newcommand{\Tr}{\mathrm{Tr}}
\newcommand{\N}{\mathrm{N}}
\newcommand{\Jac}{\mathrm{Jac}}
\newcommand{\Jacof}[1]{\Jac(#1)}
\newcommand{\Hom}{\mathrm{Hom}}
\newcommand{\Real}{\mathrm{Re}}
\newcommand{\im}{\mathrm{Im}}
\newcommand{\vol}{\mathrm{vol}}
\newcommand{\covol}{\mathrm{covol}}
\newcommand{\oJ}{\overline{J}}
\newcommand{\resfield}{\mathfrak{K}}
\newcommand{\Mat}[1]{\mathrm{Mat}_{#1\times #1}} 
\newcommand{\OMp}{\mathcal{O}_{\mathfrak{p}}}
\newcommand{\Bone}{\mathcal{B}_1} 
\DeclareMathOperator{\Spec}{\mathrm{Spec}}
\newcommand{\calC}{{\mathcal C}}
\newcommand{\OO}{{\mathcal O}}
\newcommand{\frp}{{\mathfrak p}}
\newtheorem{theorem}{Theorem}[section]
\newtheorem{definition}[theorem]{Definition}
\newtheorem{conjecture}[theorem]{Conjecture}
\newtheorem{proposition}[theorem]{Proposition}
\newtheorem{corollary}[theorem]{Corollary}
\newtheorem{lemma}[theorem]{Lemma}
\newtheorem{remark}[theorem]{Remark}
\begin{document}
\title{A bound
on the primes of bad reduction for CM curves of genus~$3$}

\author{P{\i}nar K{\i}l{\i}\c{c}er}
\address{P{\i}nar K{\i}l{\i}\c{c}er, Bernoulli Institute for Mathematics, Computer  Science and AI, Nijenborgh 9, 9747 AG Groningen, The Netherlands} \email{p.kilicer@rug.nl}
\thanks{}

\author{Kristin Lauter}
\address{Kristin Lauter, Microsoft Research, Cryptography,
One Microsoft Way,
Redmond, WA, 98052 USA} \email{klauter@microsoft.com}
\thanks{}

\author{Elisa Lorenzo Garc\'ia}
\address{Elisa Lorenzo Garc\'ia, IRMAR, Université de Rennes 1,
Campus de Beaulieu,
35042 Rennes cedex, France}
\email{elisa.lorenzogarcia@univ-rennes1.fr}
\thanks{}

\author{Rachel Newton}
\address{Rachel Newton, Department of Mathematics and Statistics, 
University of Reading, Whiteknights, PO Box 220,
Reading RG6 6AX,
UK}
\email{r.d.newton@reading.ac.uk}
\thanks{Newton is supported by EPSRC grant EP/S004696/1.
Ozman is supported by Bogazici University Research Fund Grant Number 15B06SUP3 and by the BAGEP award of the Science Academy, 2016.
Streng is supported by NWO Vernieuwingsimpuls.
}

\author{Ekin Ozman}
\address{Ekin Ozman, Bogazici University, Faculty of Arts and Sciences, Mathematics Department, Bebek, Istanbul, 34342, Turkey}
\email{ekin.ozman@boun.edu.tr}

\author{Marco Streng}
\address{Marco Streng, Mathematisch Instituut,
Universiteit Leiden,
PO Box 9512,
2300 RA Leiden,
The Netherlands}
\email{streng@math.leidenuniv.nl}

\begin{abstract}
We give bounds on the primes of geometric
bad reduction for curves of genus three
of primitive CM type in terms of the CM orders.
In the case of elliptic curves,
there are no primes of geometric bad reduction because CM elliptic
curves are CM abelian varieties, which have potential good reduction everywhere.
However,
for genus at least two, the curve can have bad reduction at a prime although the Jacobian has good reduction.
Goren and Lauter gave the first bound in the case of genus two.

In the cases of hyperelliptic and Picard curves, our results
imply bounds on primes appearing in the denominators
of invariants and class polynomials,
which are important for algorithmic construction
of curves with given characteristic polynomials over finite fields.
\end{abstract}

\maketitle

\section{Introduction}
Generating
curves over finite fields with a given number of points on the curve or on its Jacobian is a hard and interesting problem, with valuable applications and connections to number theory.  The case of
elliptic curves, for example, has important applications in cryptography, and current solutions rely on computing Hilbert class polynomials associated to imaginary quadratic fields.  For curves of genus~2, already additional interesting problems arise when trying to compute the analogous class polynomials, the Igusa class polynomials, since the coefficients are not integral as in the
case of genus~1.  This leads to the question of understanding and bounding primes of bad reduction for curves of genus~2 whose Jacobians have complex multiplication (CM), and connections with arithmetic intersection theory (\cite{GorenLauter, LauterViray}).

The case of genus~3 is more complicated than the genus 2 case.
First, an abelian threefold can be non-simple \emph{without} being isogenous to a product of elliptic curves. Second, it is possible for a sextic CM field to have both primitive \emph{and} non-primitive CM types. Third, the rank of the endomorphism algebra can be larger in the genus~3 case than in the genus~2 one. Handling each of these complications requires new ideas.

In this paper, we prove the following result which gives a bound on primes of geometric bad reduction for CM
curves of genus~3 with primitive CM type
(here and in what follows, we say that a curve has CM
if its Jacobian does, and we refer to the CM type of
the Jacobian also as the CM type of the curve).

\begin{theorem}\label{thm:main}
Let $C/M$ be a 
smooth, projective, geometrically irreducible
curve of genus $3$ over a number field~$M$.
Suppose that the Jacobian $\Jacof{C}$ has CM by an order $\mathcal{O}$ 
inside a CM field $K$ of degree $6$
and that the CM type of $\Jacof{C}$ is primitive. 
Let $\mathfrak{p}$ be a prime of $M$ lying over a rational prime $p$
such that $C$
does not have potential good reduction at $\mathfrak{p}$.
Then the following upper bound holds on $p$. For every $\mu\in \mathcal{O}$ with $\mu^2$ totally real
and $K = \QQ(\mu)$, we have $p < \mainbound$ where $B =-\frac{1}{2}\Tr_{K/\QQ}(\mu^2)$.
\end{theorem}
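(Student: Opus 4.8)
The plan is to adapt to genus~$3$ the strategy of Goren and Lauter in genus~$2$, reducing the problem to a statement about supersingular reduction. Since $\Jacof{C}$ has CM it has potential good reduction everywhere by Serre--Tate, so after replacing $M$ by a finite extension --- which does not change the rational prime~$p$ --- I may assume that $C$ has semistable reduction at $\mathfrak{p}$ and that $\Jacof{C}$ has good reduction at $\mathfrak{p}$. By hypothesis $C$ still fails to have good reduction, so the special fibre $\overline{C}$ of the stable model is a singular stable curve of arithmetic genus~$3$; and because $\mathrm{Pic}^{0}(\overline{C})=\overline{\Jacof{C}}$ is an abelian variety it has trivial toric part, so the dual graph of $\overline{C}$ is a tree, i.e.\ $\overline{C}$ is of compact type. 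Consequently, as a principally polarized abelian variety, $\overline{\Jacof{C}}\cong\prod_{i}\Jacof{\Gamma_{i}}$, the product of the Jacobians of the normalizations of the positive-genus components $\Gamma_{i}$ of $\overline{C}$; enumerating the compact-type stable curves of genus~$3$ shows that there are at least two such components and that their genera form the multiset $\{1,1,1\}$ or $\{2,1\}$.

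\emph{The reduction is supersingular.} Reduction of endomorphisms is injective, so the primitivity of the CM type produces an embedding of the degree-$6$ field $K=\QQ(\mu)$ into $\End^{0}(\overline{\Jacof{C}})=\End^{0}(\prod_{i}\Jacof{\Gamma_{i}})$. Since $K$ is a field it embeds into $\End^{0}$ of a single isotypic factor $A^{n}$ with $n\cdot\dim A\le 3$, and no endomorphism algebra of an abelian variety of dimension $\le 2$ contains a degree-$6$ field (one checks this from the classification of such algebras). As $\overline{\Jacof{C}}$ is a product of at least two abelian varieties of positive dimension it is not simple, and we conclude $\overline{\Jacof{C}}\sim E^{3}$ for an elliptic curve $E/\overline{\mathbb{F}}_{p}$, with $K\hookrightarrow\End^{0}(E^{3})=M_{3}(\End^{0}(E))$. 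This leaves exactly two cases: either $E$ is ordinary with complex multiplication by an imaginary quadratic field contained in $K$, or $E$ is supersingular. I treat the supersingular case as the main one (the ordinary case is discussed below), so assume $E$ is supersingular. Then $\End^{0}(\overline{\Jacof{C}})\cong M_{3}(B_{p,\infty})$ for the quaternion algebra $B_{p,\infty}$ ramified exactly at $p$ and~$\infty$, and we obtain an embedding of the CM order $\OO$ --- in particular of $\ZZ[\mu]$ --- into a maximal order of $M_{3}(B_{p,\infty})$.

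\emph{The explicit bound.} The endomorphism $\mu$ cannot respect the decomposition $\overline{\Jacof{C}}\cong\prod_{i}\Jacof{\Gamma_{i}}$, since otherwise $K=\QQ(\mu)$ would embed into $\End^{0}$ of a single factor of dimension $\le 2$. Hence some off-diagonal component of $\mu$ is a nonzero isogeny between two of the elliptic factors; writing $\mu^{2}=-\beta$ with $\beta\in K^{+}$ totally positive and $\Tr_{K^{+}/\QQ}(\beta)=B$, and using $|\N_{K/\QQ}(\mu)|=\N_{K^{+}/\QQ}(\beta)\le(B/3)^{3}$ together with the analogous bounds for $\Tr(\mu^{2k})$, one bounds the degrees of these isogenies --- and, more intrinsically, the reduced-norm quadratic form on the order generated by $\mu$ --- by an explicit polynomial in~$B$. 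Finally, the reduced discriminant of any order in $M_{3}(B_{p,\infty})$ is a positive power of~$p$; combining this with the embedding $\ZZ[\mu]\hookrightarrow M_{3}(B_{p,\infty})$ and the size bounds above --- equivalently, by estimating the relevant optimal embedding through the Hermitian form attached to $\mu$ by the principal polarization of $\overline{\Jacof{C}}$ --- one gets $p\mid N$ for a positive integer $N$ bounded above by $\mainbound$, whence $p<\mainbound$.

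The technical heart, and the place where the constant is created, is this last step: one must choose the right sub-object of $\overline{\Jacof{C}}$ onto which to project $\mu$, bound the degrees of the resulting isogenies and the discriminant of $\ZZ[\mu]$ purely in terms of $B=-\tfrac12\Tr_{K/\QQ}(\mu^{2})$, and carry the constants through to the stated value $\tfrac18 B^{10}$. A secondary obstacle is the ordinary case isolated above, where the quaternion-order input must be replaced by the fact that the three elliptic components become isogenous CM elliptic curves modulo $\mathfrak{p}$ although their CM lifts are not isogenous, which again yields a polynomial bound in $B$ via estimates on differences of singular moduli; and one must take care to list all compact-type stable genus-$3$ curves (including those with rational components) used in the first step.
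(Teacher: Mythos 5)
There is a genuine gap, and it sits exactly where your sketch says ``one gets $p\mid N$ for a positive integer $N$ bounded above by $\mainbound$.'' The Goren--Lauter mechanism in $M_{3}(B_{p,\infty})$ does not produce a contradiction or a divisibility from the embedding of $\ZZ[\mu]$ together with small norms: what it actually yields (and what the paper proves in Proposition~\ref{prop:field}) is only that, for $p$ large, the entries of $\iota(\mu)$ pairwise commute and hence lie in a quadratic field $\Bone\subset B_{p,\infty}$. If $K$ contains an imaginary quadratic subfield $K_{1}$, this situation is perfectly consistent for arbitrarily large $p$: one can have $\OO\hookrightarrow M_{3}(\OO_{1})$ with $\OO_{1}\subset K_{1}$, and $K_{1}$ embeds into $B_{p,\infty}$ with elements of small norm whenever $p$ is non-split in $K_{1}$, so no discriminant obstruction ever appears. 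To kill this case one must use primitivity of the CM type in an essential way; in the paper this is the entire second half of the argument (Section~\ref{sec:primitive}): the Néron-model tangent space carries the CM type, primitivity forces $\iota(\sqrt{-\delta})$ to have two distinct eigenvalues mod $\mathfrak{p}$ (Proposition~\ref{prop:2evals}, with Lemma~\ref{lem:distinctmodp} guaranteeing $p\nmid\delta$), and then block-diagonalizing shows a root of the degree-$6$ minimal polynomial of $\mu$ would lie in a quadratic field. Your proposal invokes primitivity only to get the embedding $K\hookrightarrow\End^{0}(\oJ)$ (which needs no primitivity at all), and your proposed fix for the ordinary case via differences of singular moduli does not address this either; note that without primitivity the statement is an open conjecture even in genus $2$, so no argument that omits this step can close.

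The second, related, gap is that the degree bounds are asserted rather than obtained. An abstract isogeny $\oJ\sim E^{3}$ gives no control of its degree in terms of $B$, and the entries of $\iota(\mu)$ acquire denominators of that uncontrolled degree. The paper's first main contribution is precisely the choice of decomposition: $s=(z,\,wz):E^{2}\to A$ built from the entries of $\iota_{0}(\mu)$ itself (Lemma~\ref{lem:s}), so that the pulled-back principal polarization has an explicit Gram matrix (Lemma~\ref{lem:pol_matrix}) with $n=\alpha\gamma-\N(\beta)$, and the Rosati identities plus the single trace relation for $\mu^{2}$ rewrite $B$ as a sum of non-negative terms, giving $\N(x)\le B$, $\N(\beta)\le\tfrac14B^{3}$, $n\le\tfrac14B^{3}$, $\N(d)\le\tfrac18B^{7}$ and hence the exponent $10$. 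Your outline (bounding ``the degrees of these isogenies'' by norms of $\mu$-powers) does not explain how to couple the chosen isogeny to the polarization so that these quantities are bounded by $B$ alone; that coupling is the technical heart you would still need to supply, in addition to the missing primitivity step.
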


As in the case of genus two~\cite{GorenLauter}, in order to prove Theorem~\ref{thm:main}, we use the fact that bad reduction of $C$ gives an embedding of the CM order $\mathcal{O}$ into the endomorphism ring of the reduced Jacobian such that the Rosati involution induces complex conjugation on $\mathcal{O}$ (see Lemma~\ref{lem:dual}). We show that such an embedding cannot exist for sufficiently large primes. The proof of Theorem \ref{thm:main} is given in Section \ref{sec:proofmain}.

To deal with the new situation where the reduction is
a product of an elliptic curve with an abelian surface with no
natural decomposition, we needed to find a suitable and explicit
decomposition. Just the existence of a decomposition is not enough,
and our first main contribution is to find the `right' decomposition
(Lemma~\ref{lem:s}).

The second main new idea is using the primitivity of the CM type
in the case where there exist non-primitive CM types. For this we
use the reduction of the tangent space in Section~\ref{sec:primitive}.
Primitivity is crucial for our methods, but we do give
the following conjecture in the non-primitive case.
\begin{conjecture}\label{conj}
There is a constant $e\in\RR_{\geq 0}$ such that the following holds.
Let $C/M$ be a 
smooth, projective, geometrically irreducible
curve of genus $g\leq 3$ over a number field~$M$.
Suppose that $C$ has CM (not necessarily of primitive CM type)
by an order $\mathcal{O}$
in a CM field $K$ of degree $2g$.

Let $\mathfrak{p}$ be a prime of $M$ lying over a rational prime $p$
such that $C$
does not have potential good reduction at $\mathfrak{p}$.
Then the following upper bound holds on $p$. For every $\mu\in \mathcal{O}$ with $\mu^2$ totally real
and $K = \QQ(\mu)$, we have $p < B^e$ where $B =-\frac{1}{2}\Tr_{K/\QQ}(\mu^2)$.
\end{conjecture}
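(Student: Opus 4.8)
The plan is to follow the strategy announced in the introduction: convert the bad reduction of $C$ into an embedding of the CM order into the endomorphism ring of a reduced Jacobian, show that this forces the reduced Jacobian to be isogenous to the cube of a \emph{supersingular} elliptic curve, and then read off an explicit bound on $p$ from the arithmetic of the quaternion algebra $B_{p,\infty}$.

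First I would make everything local and geometric. After replacing $M$ by a finite extension I may assume that $\Jacof{C}$ has good reduction above $\mathfrak p$ (CM abelian varieties have potential good reduction) and that $C$ has stable reduction $\overline{C}$. Since $\Jacof{C}$ has good reduction, the dual graph of $\overline{C}$ is a tree, so $\overline{C}$ is of compact type and the reduction $\overline{J}$ of $\Jacof{C}$ is, as a principally polarized abelian variety, the product $\prod_i \Jacof{\overline{C}_i}$ of the Jacobians of the (smooth) components with the product polarization; here $\sum_i g(\overline{C}_i)=3$ and no component has genus $3$, since that would give potential good reduction of $C$. Hence $\overline{J}$ is isogenous either to a product $E_1\times E_2\times E_3$ of elliptic curves or to $E\times\Jacof{\overline{C}_2}$ with $\overline{C}_2$ of genus $2$. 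Lemma~\ref{lem:dual} now supplies an embedding $\iota\colon\mathcal O\hookrightarrow\End(\overline{J})$ under which the Rosati involution of the product polarization induces complex conjugation on $\mathcal O$.

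Next I would pin down $\overline{J}$. Because $K$ is a field of degree $6$ it embeds in no nontrivial product ring and in no division algebra that properly contains it (there is no simple supersingular abelian threefold, and a non‑supersingular simple abelian variety of dimension $\le 3$ has endomorphism algebra a CM field of degree $\le 6$); running through the possibilities for the isogeny factors of $\overline{J}$ then forces $\overline{J}\sim E^3$ for a single elliptic curve $E/\overline{\mathbb F}_p$, so that $\End^0(\overline{J})\cong\Mat{3}(\End^0 E)$. Over $\overline{\mathbb F}_p$ this algebra is $\Mat{3}(K_0)$ for an imaginary quadratic field $K_0$ (if $E$ is ordinary) or $\Mat{3}(B_{p,\infty})$ (if $E$ is supersingular). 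In the ordinary case $K$ must contain the centre $K_0$, so $K$ has an imaginary quadratic subfield; here I would invoke the reduction of the tangent space of Section~\ref{sec:primitive}: the action of $K_0$ on the $3$‑dimensional space $\mathrm{Lie}(\overline{J})$ factors through a single prime of $K_0$ above $p$, which forces the reduction of the CM type $\Phi$ to be induced from a CM type of $K_0$, i.e.\ $\Phi$ imprimitive — contradicting the hypothesis, apart from finitely many $p$ dividing a controlled discriminant (harmless for the final bound). Hence $E$ is supersingular, $\End(E)$ is a maximal order $\mathcal O_B$ in $B_{p,\infty}$ of reduced discriminant $p$, and $K$ splits $B_{p,\infty}$.

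Finally I would extract the numerical bound, and this is where I expect the main obstacle to lie. When $\overline{J}=E\times\Jacof{\overline{C}_2}$ the principal polarization on the abelian surface $\Jacof{\overline{C}_2}\sim E^2$ is \emph{not} a product polarization, so I need an isogeny $\Jacof{\overline{C}_2}\to E^2$ whose degree is bounded \emph{independently of $p$} in order to realize $\overline{J}$, up to a controlled isogeny, as $E^3$ carrying a controlled polarization — a naive choice of such an isogeny would have degree growing with $p$ and would destroy the bound, so the real content is the "right" decomposition provided by Lemma~\ref{lem:s}. Granting this, I obtain an embedding of $\mathcal O$ into $\Mat{3}(\mathcal O_B)$ (after a controlled modification) under which $\iota(\mu)$ is skew for the conjugate‑transpose involution twisted by a controlled Hermitian form, and the identity $\mathrm{Trd}_{\Mat{3}(B_{p,\infty})/\QQ}\bigl(\iota(\mu)^2\bigr)=\Tr_{K/\QQ}(\mu^2)=-2B$ bounds the reduced norms of all entries of $\iota(\mu)$ by a constant multiple of $B$. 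Since $\iota(\mu)$ nevertheless generates the degree‑$6$ field $K$ inside $\Mat{3}(B_{p,\infty})$ while $\mathcal O_B$ is maximal of reduced discriminant $p$, a local analysis at $p$ — comparing $p$ against the reduced norm of a suitable element built from $\iota(\mu)$, or against the discriminant of the order it generates — forces $p$ to divide, hence to be at most, an explicit polynomial in $B$; tracking the constants through Lemma~\ref{lem:s} and this comparison should give $p<\mainbound$. The remaining work is then the bookkeeping of those constants and the proof that the decomposition of Lemma~\ref{lem:s} can be taken with $p$‑independent degree.
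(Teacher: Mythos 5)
The statement you are trying to prove is Conjecture~\ref{conj}, which the paper does \emph{not} prove: it is stated precisely to cover the case where the CM type of $C$ is allowed to be non-primitive, and the accompanying remark says explicitly that this case is an open problem even for $g=2$, and that ``a proof in the case where the CM type is non-primitive cannot use the tangent space in the way we use it in our proof.'' Your proposal, however, invokes primitivity of the CM type in an essential way: in the ordinary case you rule out the embedding by arguing that the action on $\mathrm{Lie}(\overline{J})$ would force the CM type to be induced from an imaginary quadratic subfield, ``contradicting the hypothesis.'' But primitivity is not a hypothesis of the conjecture, so this step has nothing to contradict. The same issue recurs implicitly in the supersingular case: the final contradiction in the paper's argument for Theorem~\ref{thm:main} (Section~\ref{sec:primitive}, Proposition~\ref{prop:2evals}) also rests on primitivity, via the two distinct eigenvalues of $\iota(\sqrt{-\delta})$ on the tangent space. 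Without primitivity, $\End(\Jac(C)_{\overline{M}})\otimes\QQ$ is strictly larger than $K$ (rank $2g^2$ rather than $2g$), the reduction $\overline{J}$ can genuinely admit an embedding of $\mathcal{O}$ of the kind you construct, and no bound on $p$ follows from the embedding obstruction alone. So what you have sketched is, at best, a re-derivation of the already-known primitive cases (Goren--Lauter for $g=2$ and Theorem~\ref{thm:main} for $g=3$), not a proof of the conjecture.

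Even restricted to the primitive case, the sketch defers the two points that carry the actual content of the paper's argument: you ``grant'' that the isogeny $\Jac(\overline{C}_2)\to E^2$ can be chosen with degree controlled polynomially in $B$ (this is exactly Lemma~\ref{lem:s} together with Lemma~\ref{lem:pol_matrix}, where $n=\alpha\gamma-\beta\beta^{\vee}$ is bounded via the trace identity \eqref{eq:trace}), and the concluding ``local analysis at $p$'' is vague, whereas the paper's mechanism is concrete: the norms of the entries $x,\beta,d$ are bounded by powers of $B$, so by the Goren--Lauter lemma (Lemma~\ref{lem:quaternionfield}) they commute once $p>\mainbound$, forcing $\iota(\mu)$ into $\Mat{3}(\Bone)$ with $[\Bone:\QQ]\le 2$, which is then contradicted using the tangent space and primitivity. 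To address the conjecture itself you would need a genuinely new idea replacing the tangent-space/primitivity step, for instance exploiting the extra endomorphisms available in the non-primitive case, as the paper's remark suggests; your current outline does not contain such an idea.
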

\begin{remark}
The case $g=1$ is true even with $e=0$, as CM elliptic curves
have potential good reduction everywhere.
The case of primitive CM types is Goren-Lauter~\cite{GorenLauter}
for $g=2$ and Theorem~\ref{thm:main} for $g=3$.
The case of non-primitive CM types is an open problem
even for $g=2$ as far as we know.

We do have numerical evidence in the case $g=2$.
Br\"oker-Lauter-Streng~\cite[Lemma 6.4, Tables 1 and~2]{Broker-Lauter-Streng} 
give CM hyperelliptic curves
$C_{-3}$, $C_{-6}^{1}$, $C_{-6}^2$,  $C_{-8}$,
$C_{-15}$, $C_{-20}^{i}$, $C_{-20}^{-i}$
as well as explicit CM orders~$\mathcal{O}$,
and each time the denominators of the absolute
Igusa invariants have only small prime factors.
For example, we have $(I_4I_6/I_{10})(C_{-15}) = - 3^{2} \cdot 5^{3} \cdot 79\ /\ 2^{7}$.

A proof in the case where the CM type is non-primitive
cannot use the tangent space in the way we use it
in our proof.
On the other hand, in the case of non-primitive CM types there are
more endomorphisms that one
could use. This is because (for $g\leq 3$) the endomorphism ring
$\End(J_{\overline{M}})$
has rank $2g^2$ over $\ZZ$, whereas in the case
of primitive CM types we have
$\End(J_{\overline{M}})\cong \mathcal{O}$ of rank~$2g$. Here, and throughout, $\overline{M}$ denotes an algebraic closure of $M$.
\end{remark}

The following proposition, which is proven in Section \ref{sec:geom}, turns the bound of Theorem~\ref{thm:main}
into an intrinsic bound, depending only on the discriminants
of the orders involved.
\begin{proposition}\label{prop:geomnumbers}
Let $\mathcal{O}\subset K$ be an order in a sextic CM field.
\begin{enumerate}
\item If $K$ contains no imaginary quadratic subfield,
then there exists $\mu$ as in Theorem~\ref{thm:main}
satisfying $0 < -\frac{1}{2}\Tr_{K/\QQ}(\mu^2) \leq (\frac{6}{\pi})^{2/3}|\Delta(\OO)|^{1/3}$, where $\Delta(\OO)$ is the discriminant of the order $\OO$.
\item If $K$ contains an imaginary quadratic subfield~$K_1$,
let $K_+$ be the totally real cubic subfield and let \mbox{$\mathcal{O}_i = K_i\cap \mathcal{O}$} where $i\in\{1,+\}$.
Then there exists $\mu$ as in Theorem~\ref{thm:main}
with $0 < -\frac{1}{2}\Tr_{K/\mathbb{Q}}(\mu^2) \leq |\Delta(\OO_1)|(1+2\sqrt{|\Delta(\OO_+)|})$.
\end{enumerate}
\end{proposition}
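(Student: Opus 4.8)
The plan is to handle both cases through the same mechanism. If $\mu\in\mathcal{O}$ has $\mu^2$ totally real and $\QQ(\mu)=K$, then $\overline{\mu}^2=\overline{\mu^2}=\mu^2$ forces $\overline{\mu}=\pm\mu$, and $\overline{\mu}=\mu$ is impossible (it would put $\QQ(\mu)$ inside the totally real cubic subfield $K_+$), so $\overline\mu=-\mu$; thus $\mu$ lies in the rank-$3$ lattice $L=\{x\in\mathcal{O}:\overline x=-x\}$, and, fixing representatives $\phi_1,\phi_2,\phi_3$ of the three conjugate pairs of embeddings $K\hookrightarrow\CC$,
\[
B=-\tfrac12\Tr_{K/\QQ}(\mu^2)=\sum_{k=1}^{3}\bigl(\im\phi_k(\mu)\bigr)^2
\]
for every $\mu\in L$, a positive definite quadratic form. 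So in each case I want a vector of $L$ that is short for this form \emph{and} generates $K$, and then to bound $B$.

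In case~(1) the generation condition is automatic: for $0\neq\mu\in L$ the field $\QQ(\mu)$ is not totally real (some $\phi_k(\mu)$ is non-zero purely imaginary), hence is a CM subfield of $K$, so of degree $2$ or $6$; degree~$2$ is excluded, so $\QQ(\mu)=K$. Thus it is enough to bound the first minimum of $L$, and Minkowski's convex body theorem applied to a Euclidean ball gives a non-zero $\mu\in L$ with $B(\mu)\le(6/\pi)^{2/3}\covol(L)^{2/3}$, where $\covol(L)$ is computed for the Euclidean structure above. It remains to show $\covol(L)\le|\Delta(\mathcal{O})|^{1/2}$. Set $\mathcal{O}_+=\mathcal{O}\cap K_+$; since $a\mu\in K^-$ for $a\in K_+$ and $\mu\in L$, one has $\Tr_{K/\QQ}(a\mu)=0$, so the trace form is block diagonal on the finite-index sublattice $\mathcal{O}_+\oplus L\subseteq\mathcal{O}$. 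Evaluating the blocks (using $\Tr_{K/\QQ}=2\Tr_{K_+/\QQ}$ on $K_+$, and $\phi_k(\mu)\phi_k(\nu)=-\im\phi_k(\mu)\im\phi_k(\nu)$ for $\mu,\nu\in L$) gives
\[
64\,|\Delta(\mathcal{O}_+)|\,\covol(L)^2=m^2\,|\Delta(\mathcal{O})|,\qquad m:=[\mathcal{O}:\mathcal{O}_+\oplus L].
\]
Finally $m\le 8$, because $\mathcal{O}$ is stable under complex conjugation (the Rosati involution induces it), so $2x=(x+\overline x)+(x-\overline x)\in\mathcal{O}_+\oplus L$ for all $x\in\mathcal{O}$, whence $\mathcal{O}/(\mathcal{O}_+\oplus L)$ is an $\mathbb{F}_2$-vector space; and the image of $\mathcal{O}_+$ in $\mathcal{O}/2\mathcal{O}\cong\mathbb{F}_2^6$ is $3$-dimensional (since $\mathcal{O}_+\cap 2\mathcal{O}=2\mathcal{O}_+$), so that image has codimension $\le 3$. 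As $|\Delta(\mathcal{O}_+)|\ge1$, this gives $\covol(L)^2\le|\Delta(\mathcal{O})|$ and hence the bound of~(1).

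In case~(2) I would build $\mu$ from the subfields. Let $\eta$ generate the rank-$1$ lattice $\mathcal{O}_1\cap K_1^-$; a direct computation with the conductor and discriminant of $\mathcal{O}_1$ shows $\eta^2=-d$ with $0<d\le|\Delta(\mathcal{O}_1)|$. For any $\beta\in\mathcal{O}_+$ with $\beta\notin\QQ$ put $\mu=\eta\beta\in\mathcal{O}$: then $\mu^2=-d\beta^2\in K_+$ is totally real, and $\QQ(\mu)=K$, since $\QQ(\mu)$ contains $\beta^2$, hence $\QQ(\beta^2)=\QQ(\beta)=K_+$ and $\beta$, hence $\eta=\mu/\beta$ and so $K_1$; moreover $B(\mu)=-\tfrac12\Tr_{K/\QQ}(-d\beta^2)=d\cdot\Tr_{K_+/\QQ}(\beta^2)$. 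To choose $\beta$, write $\sigma_1,\sigma_2,\sigma_3$ for the real embeddings of $K_+$ and apply Minkowski's theorem to the symmetric convex box $\{x:|\sigma_1(x)|<1,\ |\sigma_2(x)|,|\sigma_3(x)|\le|\Delta(\mathcal{O}_+)|^{1/4}\}$, of volume $\ge 2^3\covol(\mathcal{O}_+)=8\sqrt{|\Delta(\mathcal{O}_+)|}$: this yields a non-zero $\beta\in\mathcal{O}_+$ with $|\sigma_1(\beta)|<1$ (so $\beta\notin\ZZ$, hence $\beta\notin\QQ$) and $|\sigma_2(\beta)|,|\sigma_3(\beta)|\le|\Delta(\mathcal{O}_+)|^{1/4}$, whence $\Tr_{K_+/\QQ}(\beta^2)=\sum_k\sigma_k(\beta)^2\le 1+2\sqrt{|\Delta(\mathcal{O}_+)|}$. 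Multiplying by $d\le|\Delta(\mathcal{O}_1)|$ gives the bound of~(2).

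The step I expect to be most delicate is the covolume bookkeeping in case~(1): identifying $\mathcal{O}_+\oplus L$ as the right sublattice, getting the block determinants right, and bounding the index $m=[\mathcal{O}:\mathcal{O}_+\oplus L]$ — the last point genuinely uses $\mathcal{O}=\overline{\mathcal{O}}$, which holds here because the Rosati involution induces complex conjugation on the CM order. In case~(2) the only subtlety is that $\mu$ must generate $K$ rather than just $K_1$, which is why the box is taken strictly thinner than $1$ in the $\sigma_1$-direction; and $d\le|\Delta(\mathcal{O}_1)|$, while elementary, splits into cases according to the parity of the discriminant and the conductor of $\mathcal{O}_1$.
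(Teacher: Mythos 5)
Your construction is correct, and it splits naturally against the paper: case (2) is essentially the paper's own proof, while case (1) takes a genuinely different route. In case (2) the paper also applies Minkowski to the box $\{|x_1|<1,\,|x_2|,|x_3|<R\}$ in $\OO_+\subset\RR^3$ and then multiplies by a totally imaginary element of $\OO_1$; it simply uses $\sqrt{\Delta(\OO_1)}\in\OO_1$, so your generator $\eta$ of $\OO_1\cap K_1^-$ with $\eta^2=-d$, $d\le|\Delta(\OO_1)|$, is only a marginal refinement. In case (1) the paper stays in the full six-dimensional lattice $\OO\subset\CC^3$, applies Minkowski to the body $\{x:\ |\Real(x_i)|<1 \text{ for all } i,\ \sum_i \im(x_i)^2<R^2\}$ against $\covol(\OO)=2^{-3}|\Delta(\OO)|^{1/2}$, and antisymmetrizes the resulting generator, $\mu=\gamma-\overline{\gamma}$; you instead apply Minkowski with a Euclidean ball directly in the rank-$3$ lattice $L=\OO\cap K^-$, which obliges you to prove $\covol(L)\le|\Delta(\OO)|^{1/2}$. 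Your bookkeeping for that step is right: the trace form is block diagonal on $\OO_+\oplus L$, the two blocks give $64\,|\Delta(\OO_+)|\,\covol(L)^2=m^2|\Delta(\OO)|$ with $m=[\OO:\OO_+\oplus L]$, and $2\OO\subset\OO_+\oplus L$ together with the image of $\OO_+$ in $\OO/2\OO$ gives $m\le 8$, whence the same constant $(6/\pi)^{2/3}$. What the paper's version buys is that it avoids any index computation (the factors of $2$ are absorbed by the box and by $\gamma\mapsto\gamma-\overline{\gamma}$); what yours buys is a cleaner conceptual statement, namely a bound on the first minimum of the trace form on the purely imaginary part of $\OO$, at the cost of the $m\le 8$ lemma.

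Two caveats, neither fatal. First, conjugation-stability: you invoke $\overline{\OO}=\OO$ to bound $m$, and this is not among the hypotheses of the proposition as stated (it does hold for the full CM order of the Jacobian, where the proposition is applied); note that the paper's own case (1) needs the same assumption to conclude $\mu=\gamma-\overline{\gamma}\in\OO$, so you are no worse off, and you at least flag the issue. Second, your Minkowski steps should be run as the paper runs them, with the body enlarged by $\epsilon$ and a final appeal to the integrality of $-\tfrac12\Tr_{K/\QQ}(\mu^2)$ (resp.\ of $\Tr_{K_+/\QQ}(\beta^2)$): your case-(2) box has volume exactly $2^3\covol(\OO_+)$ and is not compact (the face $|\sigma_1(x)|<1$ must stay open to exclude $\beta=\pm1$), so neither the strict-inequality nor the compact-body form of Minkowski's theorem applies verbatim to it.
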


Our next result is a consequence of Theorem~\ref{thm:main} in the special cases of hyperelliptic and Picard curves.
A hyperelliptic curve of genus $3$ over a subfield $M$ of $\CC$
is a curve with an affine model of the form $C : y^2 = F(x,1)$
such that~$F$ is a separable binary form over $M$
of degree~$8$. 
A hyperelliptic curve invariant of weight~$k$ for genus~$3$ 
is a polynomial $I$ over~$\ZZ$ in the coefficients of~$F$
satisfying $I(F\circ A) = \det(A)^k I(F)$
for all $A\in\mathrm{GL}_2(\CC)$.
For example, the discriminant $\Delta$ of $F$
(not to be confused with $\Delta(\mathcal{O})$)
is an invariant of weight $56$.
Shioda~\cite{Shioda} gives a set of invariants
that uniquely determines the isomorphism class of $C$ over $\CC$. 

A Picard curve of genus $3$ over a field $M$ of characteristic $0$
is a smooth plane projective curve given by an affine model $C : y^3 = f(x)$ such that
$f$ is a monic separable polynomial over $M$
of degree $4$.
Such a curve can be written
as follows (uniquely up to scalings $(x,y) \mapsto (u^3x,u^4 y)$ with $u\in M^*$, 
which change $a_l$ into $u^{3l} a_l$):
\begin{equation}
\label{eq:picard}
y^3 = f(x)=x^4 + a_{2}x^2 + a_{3} x + a_{4}.
\end{equation}
We define the ring of invariants to be the
graded ring generated over $\ZZ[\frac{1}{3}]$
by the symbols $a_2$, $a_3$ and $a_4$ of respective weights $2$, $3$, $4$.
It contains the discriminant $\Delta$ of
$f(x)$,
which is an invariant of weight $12$.

The following consequence of Theorem~\ref{thm:main}
is derived in Section~\ref{sec:invariants}.
\begin{theorem}\label{thm:invariantdenom}
Let $C/M$ be a hyperelliptic (respectively Picard)
curve of genus $3$ over a number field~$M$.
Suppose that $C$ has CM by an order $\mathcal{O}$
inside a CM field $K$ of degree $6$
and that the CM type of $C$ is primitive.
Let $l\in\ZZ_{>0}$ and let
$j = u/\Delta^l$ be a quotient of invariants of hyperelliptic (respectively Picard)
curves, such that the numerator $u$ has weight $56l$ (respectively $12l$).
 Let $\mathfrak{p}$ be a prime over a prime number $p$ such that 
$\mathrm{ord}_{\mathfrak{p}}(j(C))< 0$.
Then $p$ satisfies the bound of Theorem~\ref{thm:main}.
\end{theorem}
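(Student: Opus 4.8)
The plan is to deduce this from Theorem~\ref{thm:main} by showing that the hypothesis $\mathrm{ord}_{\mathfrak{p}}(j(C))<0$ forces $C$ to fail to have potential good reduction at~$\mathfrak{p}$. First I would record that $j=u/\Delta^l$ is a weight-$0$ invariant: since $u$ has weight $56l$ and $\Delta$ has weight $56$, for any binary octic $F$ defining $C$ and any $A\in\mathrm{GL}_2(\overline{M})$ one has $j(F\circ A)=\det(A)^{56l}u(F)/(\det(A)^{56}\Delta(F))^{l}=j(F)$. Hence $j(C):=j(F)$ is a well-defined element of~$M$ (a model $F/M$ exists because $C$ is defined over $M$, and $\Delta(F)\neq 0$ because $C$ is smooth), and it is unchanged under base change. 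Fixing a $\mu$ as in Theorem~\ref{thm:main}, it suffices to prove $p<\mainbound$; so I would argue by contradiction, assuming $p\geq\mainbound$ and deriving $\mathrm{ord}_{\mathfrak{p}}(j(C))\geq 0$, which contradicts the hypothesis.

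The first step under this assumption is a reduction to odd~$p$. Since $\mu\in\mathcal{O}$ generates the CM field $K=\QQ(\mu)$ of degree~$6$, the element $-\mu^2$ is a totally positive algebraic integer and $\QQ(\mu^2)$ is the totally real cubic subfield of~$K$; thus $B=-\tfrac12\Tr_{K/\QQ}(\mu^2)$ equals the sum of the three positive conjugates of $-\mu^2$, whose product $\N_{\QQ(\mu^2)/\QQ}(-\mu^2)$ is a positive integer, so the arithmetic--geometric mean inequality gives $B\geq 3$. Hence $p\geq\tfrac18\,3^{10}>2$, so $p$ is odd. The contrapositive of Theorem~\ref{thm:main}, applied to our~$\mu$, then turns $p\geq\mainbound$ into the statement that $C$ has potential good reduction at~$\mathfrak{p}$: there exist a finite extension $M'/M$ and a prime $\mathfrak{p}'$ of $M'$ above $\mathfrak{p}$ with $C_{M'}$ of good reduction at~$\mathfrak{p}'$.

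The final step is to convert good reduction at the odd prime $\mathfrak{p}'$ into an integral model of unit discriminant. Writing $\mathcal{C}$ for the smooth proper $\mathcal{O}_{M',\mathfrak{p}'}$-model of $C_{M'}$, the hyperelliptic involution extends to an automorphism $\iota$ of $\mathcal{C}$; as $\#\langle\iota\rangle=2$ is prime to the residue characteristic, the quotient $\mathcal{C}/\langle\iota\rangle$ is a smooth proper curve over $\mathcal{O}_{M',\mathfrak{p}'}$ with generic fibre $\mathbb{P}^1$, hence with special fibre $\mathbb{P}^1$ as well, so the reduction of $C_{M'}$ is again hyperelliptic of genus~$3$ with $8$ distinct Weierstrass points. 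Choosing coordinates with no Weierstrass point at $x=\infty$, we obtain a binary octic $F$ over $\mathcal{O}_{M',\mathfrak{p}'}$ defining $C_{M'}$ with $\mathrm{ord}_{\mathfrak{p}'}(\Delta(F))=0$; since every hyperelliptic invariant is a polynomial with $\ZZ$-coefficients in the coefficients of $F$, we get $u(F)\in\mathcal{O}_{M',\mathfrak{p}'}$, so
\[
\mathrm{ord}_{\mathfrak{p}'}(j(C))=\mathrm{ord}_{\mathfrak{p}'}(u(F))-l\,\mathrm{ord}_{\mathfrak{p}'}(\Delta(F))=\mathrm{ord}_{\mathfrak{p}'}(u(F))\geq 0 .
\]
As $\mathrm{ord}_{\mathfrak{p}'}$ restricts on $M$ to a positive multiple of $\mathrm{ord}_{\mathfrak{p}}$, this yields $\mathrm{ord}_{\mathfrak{p}}(j(C))\geq 0$, the desired contradiction, and hence $p<\mainbound$.

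The one step that is more than bookkeeping is the passage from good reduction to an integral Weierstrass model of unit discriminant — equivalently, the fact that the reduction of a hyperelliptic curve at an odd prime of good reduction is again hyperelliptic of the same genus. This is classical in the theory of models of hyperelliptic curves (it can also be extracted from the characterization of good reduction via the minimal discriminant), and the quotient-by-the-involution argument above makes it self-contained; the only subtlety is ensuring the model has degree exactly~$8$ rather than~$7$, which is handled by moving the Weierstrass points off $x=\infty$.
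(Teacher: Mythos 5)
Your proposal is correct and follows essentially the same route as the paper: reduce to odd $p$ via a lower bound on $B$ (your AM--GM argument even gives $B\geq 3$, slightly stronger than the paper's Lemma~\ref{lem:Bnot1}), and show that potential good reduction at an odd prime yields an integral octic model with unit discriminant, hence $\mathrm{ord}_{\mathfrak{p}}(j(C))\geq 0$, so Theorem~\ref{thm:main} applies. The one substantive step you argue by hand --- that good reduction at odd residue characteristic gives a model $y^2=F(x)$ over $\OMp$ with separable reduction, via extending the hyperelliptic involution to the smooth model and quotienting --- is exactly what the paper instead cites from Liu (Example 10.1.26, Proposition~\ref{prop:hyper}), and your sketch of it, though it glosses a few details (tameness giving that the quotient commutes with reduction, choosing a coordinate avoiding the branch locus over a possibly small residue field, and why the defining constant is a unit), is sound.
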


\begin{remark}\label{rem:picardcase} In the Picard curve case, subsequent work 
of K{\i}l{\i}\c{c}er, Lorenzo Garc\'ia and Streng~\cite{KLS} using the results of Section~\ref{sec:primitive}
gives a much stronger analogue of Theorem~\ref{thm:invariantdenom}
for the alternative invariants
$a_2a_4/a_3^2
$ and $a_2^3/a_3^2
$.
\end{remark}

In the Picard curve case, we define $j_1 = a_2^{6}/\Delta$, $j_2 = a_2^3a_3^2/\Delta$, $j_3 = a_2^4a_4/\Delta$, $j_4 = a_3^4/\Delta$, $j_5=a_4^3/\Delta$, $j_6 = a_2a_3^2a_4/\Delta$ and $j_7 = a_2^2a_4^2/\Delta$.
Over an algebraic closure $\overline{M}$ of $M$, any Picard curve has a model in one of the following forms:
\begin{align*}
y^3 &= x^4 + Ax^2 + Ax + B, & & A = j_1^{\phantom{1}}j_2^{-1},\ B = j_1^{\phantom{1}}j_2^{-2}j_3^{\phantom{1}}=j_3^{\phantom{1}}j_4^{-1} & & \mbox{if $j_2\not=0$},\\
y^3 &= x^4 + Ax^2 + Bx + B, & & A = j_6^{\phantom{1}}j_5^{-1},\ B = j_4^{\phantom{1}}j_5^{-1} & & \mbox{if $j_4j_5\not=0$},\\
y^3 &= x^4 + x^2 + A, & & A = j_3^{\phantom{1}}j_1^{-1} & & \mbox{if $j_1\not=0$, $j_2=0$},\\
y^3 &= x^4 + x, & &  & & \mbox{if $j_1=j_5=0$},\\
y^3 &= x^4 + 1, & &  & & \mbox{if $j_1=j_4=0$}.
\end{align*}

We use the same notation $j_l$ also in the hyperelliptic
case, but there we take it to mean 
the following quotients
of Shioda invariants
appearing in Weng~\cite[(5)]{Weng}:
$j_1 = I_2^7/\Delta$,
$j_3 = I_2^5I_4/\Delta$, \mbox{$j_5 = I_2^4I_6/\Delta$}, $j_7=I_2^3I_8/\Delta$
and $j_9 = I_2^2 I_{10}/\Delta$.
Note that these
invariants
satisfy the hypothesis of Theorem~\ref{thm:invariantdenom}. 

Now suppose that $K$ is a sextic CM field
containing a primitive $4$th root of unity
and consider invariants of hyperelliptic curves.
Alternatively, let $K$ be a sextic CM field
containing a primitive $3$rd root of unity
and consider invariants of Picard curves.
Let $j=u/\Delta^l$ and $j'=u'/\Delta^l$ be quotients of invariants
of hyperelliptic (respectively
Picard) curves, such that the numerators $u$ and $u'$ have weight 56l
(respectively 12l).
We define the class polynomials $H_{K,j}$ and $\widehat{H}_{K,j,j'}$
by

$$H_{K,j} = \prod_{C} (X-j(C)), \;\; \;\; \widehat{H}_{K,j,j'} = \sum_{C} j'(C) \prod_{D\not\cong C} (X-j(D)) \in \CC[X]$$
where the products and sum range over
isomorphism classes of curves $C$ and $D$ over $\CC$ with
CM by $\mathcal{O}_K$ of primitive CM type,
which are indeed hyperelliptic (resp.~Picard) by
Weng \cite[Theorem 4.5]{Weng} (resp. Koike-Weng~\cite[Lemma 1]{KoikeWeng}).
The polynomial $\widehat{H}_{K,j,j'}$ is the modified Lagrange interpolation of the roots of $H_{j'}$ introduced in
\cite[Section~3]{GHKRW}.
These polynomials have rational coefficients
as they are fixed by $\mathrm{Aut}(\CC)$.
Moreover, the polynomials
$H_{j_l}$ and $\widehat{H}_{j_1,j_l}$, where $l$ ranges over $\{3, 5, 7, 9\}$ in the
hyperelliptic case and over $\{2,3\}$ in the Picard case, 
can be used for the CM method for constructing
curves over finite fields.
See \cite[Section~3]{GHKRW} 
as well as \cite{Weng} 
(resp.~\cite{KoikeWeng}) for how to use these polynomials.

The polynomials
$H_{K, j}$ and $\widehat{H}_{K,j,j'}$ can be approximated
using the methods of
Weng~\cite{Weng} and Balakrishnan-Ionica-Lauter-Vincent~\cite{BILV}
in the hyperelliptic case and the methods of Koike-Weng~\cite{KoikeWeng}
and
Lario-Somoza~\cite{LarioSomoza}
in the Picard case.
The (rational) coefficients
of the polynomials
can then be recognized from such approximations
using continued fractions or the LLL algorithm.
However, to be absolutely sure of the coefficients, 
one would need a bound on the denominators.
We view the following result as a first step
towards obtaining such a bound. 
It is an immediate consequence of
Theorem~\ref{thm:invariantdenom}.
\begin{theorem}\label{thm:classpoly}
Let $K$ be a sextic CM field containing a primitive $4$th root 
of unity and let
$p$ be a prime number that divides the denominator
of a class polynomial $H_{j}$ or $\widehat{H}_{j,j'}$
with quotients of hyperelliptic curve invariants $j$ and $j'$
as in Theorem~\ref{thm:invariantdenom}.
Then $p$ satisfies the bound of Theorem~\ref{thm:main}. The statement remains true if one replaces `4th' by `3rd' and `hyperelliptic' by `Picard'.\qed
\end{theorem}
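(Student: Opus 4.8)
The plan is to deduce Theorem~\ref{thm:classpoly} immediately from Theorem~\ref{thm:invariantdenom}, using the elementary fact that if a rational prime $p$ occurs in the denominator of a coefficient of $H_{K,j}$ or $\widehat{H}_{K,j,j'}$, then some prime $\mathfrak{p}$ above $p$ occurs in the denominator of one of the invariant values $j(C)$ or $j'(C)$ of one of the CM curves $C$ over which the product (resp.\ sum) is taken. Once that reduction is in place, Theorem~\ref{thm:invariantdenom} applied to that curve yields exactly the claimed bound on $p$.

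First I would set up a common field of definition. The product and the sum defining $H_{K,j}$ and $\widehat{H}_{K,j,j'}$ range over the finitely many $\CC$-isomorphism classes of curves $C$ with CM by the maximal order $\mathcal{O}_K$ of primitive CM type; by Weng~\cite[Theorem~4.5]{Weng} each such $C$ is hyperelliptic of genus~$3$. Since the Jacobian of $C$, being a CM abelian threefold, is defined over a number field by the theory of complex multiplication, so is $C$. I would therefore fix a number field $M$ such that each of these finitely many curves $C$ admits a model over $M$; then all the invariant values $j(C)$ and $j'(C)$ lie in $M$. For each such $C$, base-changed to $M$, the hypotheses of Theorem~\ref{thm:invariantdenom} are satisfied: it is a hyperelliptic genus~$3$ curve over the number field $M$ with CM by the order $\mathcal{O}_K$ in the sextic CM field $K$, the CM type is primitive, and both $j = u/\Delta^l$ and $j' = u'/\Delta^l$ are quotients of invariants whose numerators have weight $56l$.

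Next, consider $H_{K,j}$ and suppose $p$ divides its denominator, i.e.\ some coefficient $c\in\QQ$ has $\mathrm{ord}_p(c)<0$ (the coefficients lie in $\QQ$ because $H_{K,j}$ is fixed by $\mathrm{Aut}(\CC)$). Up to sign, $c$ is an elementary symmetric polynomial in the finite family $\{j(C)\}$, hence a polynomial with coefficients in $\ZZ$ in the $j(C)$. Choose a prime $\mathfrak{p}$ of $M$ over $p$; since $\mathrm{ord}_{\mathfrak{p}}$ restricted to $\QQ$ is a positive multiple of $\mathrm{ord}_p$, we have $\mathrm{ord}_{\mathfrak{p}}(c)<0$. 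If every $j(C)$ were $\mathfrak{p}$-integral, then so would be every monomial occurring in $c$, and hence $c$ itself, a contradiction; thus $\mathrm{ord}_{\mathfrak{p}}(j(C))<0$ for some $C$. Applying Theorem~\ref{thm:invariantdenom} to this $C/M$ with the prime $\mathfrak{p}$ shows that $p$ satisfies the bound of Theorem~\ref{thm:main}. For $\widehat{H}_{K,j,j'}$ the argument is identical in structure: up to sign its coefficients are the expressions $\sum_C j'(C)\, e_k\!\left(\{\,j(D): D\not\cong C\,\}\right)$, which are polynomials with coefficients in $\ZZ$ in the $j(C)$ together with the $j'(C)$; so $p$ dividing the denominator forces $\mathrm{ord}_{\mathfrak{p}}(j(C))<0$ or $\mathrm{ord}_{\mathfrak{p}}(j'(C))<0$ for some $C$ and some prime $\mathfrak{p}\mid p$ of $M$, and I would then invoke Theorem~\ref{thm:invariantdenom} with the invariant $j$ in the former case and with $j'$ in the latter, the latter being legitimate since $j'$ also has the admissible form $u'/\Delta^l$ with $\mathrm{wt}(u')=56l$. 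In every case $p$ satisfies the bound of Theorem~\ref{thm:main}.

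I do not expect any genuine obstacle: all the substantive work is already carried out in Theorem~\ref{thm:invariantdenom}. The only two points that need a sentence of justification are the valuation-theoretic step passing from ``$p$ divides a rational denominator'' to ``$\mathfrak{p}$ occurs in the denominator of an algebraic invariant value'', and the standard input (complex multiplication theory together with Weng's theorem) that the curves over which the product and the sum are taken are genuinely hyperelliptic of genus~$3$, are of primitive CM type, and are defined over number fields, so that Theorem~\ref{thm:invariantdenom} indeed applies to each of them.
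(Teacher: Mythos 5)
Your proposal is correct and is exactly the argument the paper intends: the paper gives no separate proof, declaring Theorem~\ref{thm:classpoly} an immediate consequence of Theorem~\ref{thm:invariantdenom}, and your write-up simply makes that deduction explicit (curves with CM by $\mathcal{O}_K$ of primitive type are hyperelliptic and defined over a number field, coefficients are integral polynomial expressions in the $j(C)$ and $j'(C)$, so a prime in a denominator of a coefficient forces $\mathrm{ord}_{\mathfrak{p}}(j(C))<0$ or $\mathrm{ord}_{\mathfrak{p}}(j'(C))<0$ for some $\mathfrak{p}\mid p$). No gaps; this matches the paper's approach.
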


\subsection{Applications, further work and open problems}
\subsubsection*{Sharper upper bounds, and exponents.} We believe that the exponent $10$ in Theorem~\ref{thm:main}
is not optimal. For instance, in \cite{BCLLMNO}, for the special case of reduction to a product of 3 elliptic curves with $K$ not containing any proper CM subfield, one gets an exponent of $6$. 
In the general case, it may be possible
to get smaller exponents using variants of our proof,
for example
with a different choice of isogeny $s$ in Section~\ref{sec:embprb}, or by considering bounds
in Section~\ref{sec:coeffbound}
coming not just from the matrix of $\mu$,
but also from other elements.

We also believe that it is now possible to
combine our proofs with the
techniques of Goren and Lauter~\cite{GorenLauter2}
to get not only a bound on the primes 
in the denominator of Theorems \ref{thm:invariantdenom}
and~\ref{thm:classpoly}, but also a bound on the
valuations at those primes.
Together, these bounds will give a bound
on the denominator itself,
which is required if one wants to prove
that the output of a 
class-polynomial-computing algorithm is correct.
This was done for genus $2$ by Streng~\cite{Streng}.
As in the case of genus~$2$,
the resulting bounds will be so large that the algorithm
is purely theoretical and cannot be run in practice.
However, we view our results as a first step towards
a denominator formula such as that of
Lauter and Viray~\cite{LauterViray},
which is small and explicit enough for yielding
proven-correct CM curves, as shown by
Bouyer and Streng~\cite{BouyerStreng, StrengImplementation}.

\subsubsection*{Denominators for general curves of genus $3$}

Theorem \ref{thm:invariantdenom}
(and hence~\ref{thm:classpoly}) is only for hyperelliptic and Picard curves.
The reason why it
follows
from Theorem~\ref{thm:main} (as shown in Section~\ref{sec:invariants})
is that 
the primes dividing the denominator $\Delta^l(C)$ of $j(C)$
are exactly the primes of bad reduction for~$C$.
In other words, it is because the zero locus of $\Delta$
in the compactification of the moduli space
of hyperelliptic/Picard curves parametrizes only singular curves.
In the case of the moduli space of all curves of genus three,
the locus of bad reduction has codimension greater than $1$, hence is
not the vanishing locus of an invariant.
In particular, no generalization of Theorem~\ref{thm:invariantdenom}
would follow directly from Theorem~\ref{thm:main}
or even from Conjecture~\ref{conj}.

The most direct generalization
of Theorem~\ref{thm:invariantdenom} to
arbitrary primitive CM curves of genus $3$ would have the
discriminant invariant of plane quartics as the denominator
of~$j$.
Numerical experiments of
K{\i}l{\i}{\c{c}}er, Labrande, Lercier, Ritzenthaler, Sijsling
and Streng~\cite{KLLRSS}
suggest that this generalization would be false.

\subsubsection*{Lower bounds} Habegger and Pazuki
\cite[Theorems 1.3 and~4.5(ii)]{HabeggerPazuki} give lower bounds on the denominators
of absolute invariants of CM curves of genus~$2$.
It would be interesting to see whether a similar result is true for hyperelliptic
or Picard curves of genus~$3$.

\subsection{Acknowledgements}
We thank
Irene Bouw, Bas Edixhoven, Everett Howe, Christophe Ritzenthaler and Chia-Fu Yu
for useful discussions and for pointing out some of the references.
We are grateful to the anonymous referees for many helpful suggestions. Part of this work was carried out at
Carl von Ossietzky University of Oldenburg, the
Istanbul Center for Mathematical Sciences,
the Lorentz Center,
the
Max Planck Institute for Mathematics,
UC San Diego, and
the
University of Warwick.

\section{Notation and strategy}

For the reader's convenience, we define some well-known concepts
that are essential for our approach.
By a \emph{curve} over a field~$M$, we mean a smooth, projective,
geometrically irreducible curve over $M$ unless we say otherwise.
\begin{definition}\label{def:hasCM}
Let $\mathcal{O}$ be an order in a \emph{CM field} $K$ of degree
$2g$ over $\QQ$, that is, an imaginary quadratic extension
of a totally real number field.
We say that a curve $C$ of genus $g$ over a number field
$M$ has \emph{complex multiplication} by $\mathcal{O}$
if there exists an embedding $\phi$ of $\mathcal{O}$ into
the endomorphism ring of the Jacobian $\Jacof{C}_{\overline{M}}$ of $C$
over the algebraic closure.
\end{definition}
\begin{definition}\label{def:primitivetype}Let $K$ be as in Definition~\ref{def:hasCM}.
A \emph{complex multiplication type (CM type)} of $K$
is a set of~$g$ non-conjugate embeddings $K \hookrightarrow \CC$.
We say that a CM type is primitive if its restriction to any strict CM subfield of $K$ is not a CM type.
\end{definition}
\begin{definition}\label{def:CMtypeof}
Given $J$ and $\phi$ as in Definition~\ref{def:hasCM} with $\overline{M}\subset \CC$,
we obtain
a CM type by diagonalizing the action of $K$ via $\phi$
on the tangent space of $\Jacof{C}_{\overline{M}}$ at~$0$,
and we call this the \emph{CM type of $C$}.
\end{definition}

Now let $C$ be a curve of genus~$3$ defined over a number field $M$ and such that its Jacobian $J = \Jacof{C}$ has complex multiplication by an order $\mathcal{O}$ of a sextic CM field~$K$. Let us assume that the CM type is primitive.  We fix a totally imaginary generator $\mu\in\mathcal{O}$ of $K$ over~$\QQ$. Thus, $\mu^2$
 is a totally negative element
 of $\mathcal{O}$
 that  generates the totally real subfield $K_+$ of $K$.

Let $\mathfrak{p}\mid p$ be a prime such that $C$ does not have
potential good reduction at $\mathfrak{p}$.
In other words, $\mathfrak{p}$ is a prime of geometric bad reduction for $C$,
in the sense that even after extension
of the base field, the curve
$C$ still has bad reduction at all primes above~$\mathfrak{p}$.
As noted in \cite[Section~4.2]{BCLLMNO}, this is equivalent to the
stable reduction of $C$ being non-smooth,
where this type of reduction is simply called ``bad reduction''.
As~$J$ has complex multiplication, it has potential good reduction
at every prime by a result of Serre and Tate~\cite{SerreTate}.
Without loss of generality of our main results, we extend the
field $M$ so that $C$ has a stable model for the reduction
at $\mathfrak{p}$ and $J$ has
good reduction at $\mathfrak{p}$. Let $\overline{J} = (J\ \mathrm{mod}\ \mathfrak{p})$.

By Corollary $4.3$ in \cite{BCLLMNO}, we know that,
possibly after extending the base field again,
there exists an isomorphism 
$\overline{J}\cong E\times A$ as principally polarized abelian varieties (p.p.a.v.) over the new base field, where $E$ is an elliptic
curve with its natural polarization and $A$ is a principally polarized
abelian surface.
This includes the case where there is an isomorphism
$\overline{J}\cong E_1\times E_2\times E_3$
as p.p.a.v.,
where $A \cong E_2\times E_3$ is a product of elliptic curves.
Let us write $\text{End}(E)=\mathcal{R}$ and $\mathcal{B}=\mathcal{R}\otimes \QQ$. 

We will see that there is an isogeny $s:E^2\rightarrow A$
(which is, in fact, already known
by \cite[Theorem $4.5$]{BCLLMNO}).
Once we fix an isogeny~$s$, there are natural embeddings 
\[\iota:\,\mathcal{O}\overset{\iota_0}{\hookrightarrow} \text{End}(E\times A)\overset{\iota_1}{\hookrightarrow} \text{End}(E^3)\otimes \mathbb{Q}\cong\Mat{3}(\mathcal{B}).\]

Step 1 is to show that for sufficiently large primes~$p$,
the entries of $\iota(\mu^2)$ lie in a
field~$\Bone\subset\mathcal{B}$ of degree $\leq 2$ over~$\QQ$.
This is obvious in the case where $E$ is ordinary, and requires
work in the supersingular case. 
As in Goren-Lauter~\cite{GorenLauter}, we prove this by bounding
the coefficients of $\iota(\mu)$.
The main difficulty here was finding an appropriate isogeny~$s$,
as not every isogeny~$s$ allows us to find bounds.

Step 2 is to show that in the situation of Step~1, 
the field $\Bone$ embeds into $K$ and the CM type
is induced from $\Bone$, which contradicts the primitivity of the CM type. In order to show this, we use the tangent
space of the N\'eron model at the zero section.
No analogue of Step 2 was needed in the case of genus~$2$ because a quartic CM field containing an imaginary quadratic subfield
has no primitive CM types.

The special case of $\overline{J}\cong E_1\times E_2\times E_3$
as p.p.a.v.\ where $K$ does not contain an imaginary
quadratic field is the main result of \cite{BCLLMNO}.

The following bound will be convenient in
the sense that it allows us to formulate
Theorem~\ref{thm:main} and Proposition~\ref{prop:field}
without the need for case distinctions.
\begin{lemma}
\label{lem:Bnot1}
Let $B = -\frac{1}{2}\Tr_{K/\mathbb{Q}}(\mu^2)$. Then $B$ is an integer and $B\geq 2$.
\end{lemma}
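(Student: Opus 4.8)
The plan is to exploit the two defining properties of $\mu$: that $\mu \in \mathcal{O}$ (so $\mu$ is an algebraic integer) and that $\mu^2$ is a totally negative element generating the totally real cubic field $K_+$. First I would observe that $B = -\frac{1}{2}\Tr_{K/\QQ}(\mu^2)$ equals $-\Tr_{K_+/\QQ}(\mu^2)$, since $K/K_+$ is a degree-two extension on which the trace of an element of $K_+$ simply doubles it. Because $\mu^2 \in \mathcal{O} \cap K_+$ is an algebraic integer lying in $K_+$, its trace $\Tr_{K_+/\QQ}(\mu^2)$ is a rational integer, so $B$ is an integer. Moreover $\mu^2$ is totally negative, i.e.\ every real embedding of $K_+$ sends $\mu^2$ to a negative real number, so $\Tr_{K_+/\QQ}(\mu^2) < 0$ and hence $B \geq 1$.

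It remains to rule out $B = 1$, i.e.\ to show $\Tr_{K_+/\QQ}(\mu^2) \neq -1$. Here I would use that $K_+$ is a \emph{cubic} field and that the three real conjugates $\mu_1^2, \mu_2^2, \mu_3^2$ of $\mu^2$ are all strictly negative algebraic integers summing to $-1$; I also know $\mu^2$ generates $K_+$ over $\QQ$, so these three conjugates are distinct, and in particular $\mu^2 \notin \QQ$, which already forbids $(\mu_1^2,\mu_2^2,\mu_3^2) = (-\tfrac13,-\tfrac13,-\tfrac13)$ even before integrality. The key point is a bound on the product: by the AM--GM inequality applied to the positive reals $-\mu_i^2$, we get $0 < \prod_i(-\mu_i^2) \leq \big(\tfrac{1}{3}\sum_i(-\mu_i^2)\big)^3 = \tfrac{1}{27}$ when the sum is $1$. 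But $\prod_i(-\mu_i^2) = -\prod_i \mu_i^2 = -\N_{K_+/\QQ}(\mu^2)$ is a nonzero rational integer, hence has absolute value $\geq 1$, contradicting $\leq \tfrac{1}{27}$. Therefore $\Tr_{K_+/\QQ}(\mu^2) \neq -1$, so $B \geq 2$.

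The main obstacle, such as it is, is just making sure the degenerate cases of AM--GM are handled: equality forces all $-\mu_i^2$ equal, which is excluded since $\mu^2$ generates the cubic field $K_+$ and therefore cannot be rational. Everything else is bookkeeping with traces and norms in the tower $\QQ \subset K_+ \subset K$ together with the total negativity of $\mu^2$; no input about the curve $C$, the prime $\mathfrak{p}$, or the reduction is needed for this lemma.
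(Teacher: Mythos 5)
Your proposal is correct, and it shares the paper's first half verbatim in spirit: $B=-\Tr_{K_+/\QQ}(\mu^2)$ is an integer because $\mu^2\in\mathcal{O}\cap K_+$, and $B\geq 1$ by total negativity. Where you diverge is in ruling out $B=1$: the paper looks at $\Tr_{K_+/\QQ}(\mu^4)=a^2+b^2+c^2$, where $-a,-b,-c$ are the real conjugates of $\mu^2$ with $a+b+c=1$, notes each $a,b,c\in(0,1)$ so this trace is a non-negative integer strictly less than $1$, hence zero, forcing $a=b=c=0$, a contradiction; you instead use the norm, bounding $\prod_i(-\mu_i^2)=-\N_{K_+/\QQ}(\mu^2)$ by $\tfrac{1}{27}$ via AM--GM while integrality forces it to be at least $1$. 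Both arguments are elementary and complete; yours in fact proves slightly more, since the same AM--GM bound shows $-\Tr_{K_+/\QQ}(\mu^2)\geq 3$ for any totally negative algebraic integer generating a cubic field, whereas the paper's trace-of-$\mu^4$ trick is tailored exactly to excluding $B=1$ and needs no inequality beyond $x^2<x$ on $(0,1)$. One small remark: your concern about the equality case of AM--GM is unnecessary, since even non-strict AM--GM gives a product at most $\tfrac{1}{27}<1$, which already contradicts the integrality of the norm; the distinctness of the conjugates plays no role.
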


\begin{proof}
Recall that $K_+$ denotes the totally real cubic subfield of $K$. Since $\mu^2\in \mathcal{O}\cap K_+$, we have $B\in\mathbb{Z}$. %
Since $K=\mathbb{Q}(\mu)$, the element $\mu^2$ is totally negative and hence $B>0$. Now suppose that $B=1$. Let $a\geq b \geq
c \geq 0$ be such that $-a, -b, -c$ are the images of $\mu^2$ inside $\RR$ under
the three embeddings of $K_+$ into $\RR$. Then $a + b + c = B = 1$, so each of
$a, b, c$ is in the interval $(0,1)$. In particular, we get $\Tr_{K+/\mathbb{Q}}(\mu^4)
= a^2+b^2+c^2 < a + b + c = 1$. As this trace is a non-negative integer, it is zero,
hence $a=b=c=0$, contradiction.
\end{proof}

\section{An embedding problem}
\label{sec:embprb}

Throughout Sections \ref{sec:embprb}, \ref{sec:coeffbound} and \ref{sec:primitive}, we fix a prime $\frp\mid p$ that is of good reduction for $J=\Jacof{C}$
and not of potential good reduction for~$C$.
In particular, possibly after extending the base field,
the reduction
satisfies $\oJ\cong E\times A$ as polarized
abelian varieties for a
principally polarized abelian surface $A$ and an
elliptic curve~$E$.
Let $\mathcal{R} = \End(E)$ and $\mathcal{B}=\mathcal{R}\otimes \QQ$,
which is either a quaternion algebra or an imaginary quadratic field.

 We write $K=\QQ(\mu)$ where $\mu^2\in\mathcal{O}$
 is totally negative and
 generates the totally real subfield $K_+$ of $K$.

\newcommand{\homEA}[1]{\fbox{\raisebox{0ex}[3ex][2ex]{$#1$}}}
\newcommand{\homAE}[1]{\framebox[3em][c]{$#1$}}
\newcommand{\homAA}[1]{\framebox[3em][c]{\raisebox{0ex}[3ex][2ex]{$#1$}}}
\newcommand{\EndEA}[4]{{\setlength\arraycolsep{0pt} \left(\begin{array}{cc} #1 & \homAE{#2} \\ \homEA{#3} & \homAA{#4}\end{array}\right)}}
\newcommand{\homEEA}[2]{{\setlength\arraycolsep{0pt} \left(\begin{array}{cc} \homEA{#1} &\homEA{#2}\end{array}\right)}}
\newcommand{\homEEEtoEA}[6]{{\setlength\arraycolsep{0pt} \left(\begin{array}{ccc}
#1 & #2 & #3 \\
\homEA{#4} &\homEA{#5} &\homEA{#6} \end{array}\right)}}
\newcommand{\xyzw}{\EndEA{x}{y}{z}{w}}

Let $\iota_0:\,\mathcal{O}\hookrightarrow \text{End}(E\times A)$ be the injective ring homomorphism coming from reduction of $J$ at $\frp$ and write
\begin{equation}\label{eq:xyzw}
  \iota_0(\mu)=:\xyzw,
\end{equation}
where we have $x\in \mathcal{R}$, $y\in\text{Hom}(A,E)$, $z\in\text{Hom}(E,A)$ and $w\in \text{End}(A)$;
and the sizes of the boxes reflect the dimensions of the domains and codomains of the
homomorphisms.
We define a homomorphism
$$
s = \homEEA{z}{\!\! wz \!\!} :E\times E \longrightarrow A, \;\;\;
(P,Q)\longmapsto z(P) + wz(Q).
$$
We first quickly eliminate the degenerate case where $s$ is not an isogeny.
\begin{lemma}\label{lem:s}
The homomorphism $s$ is an isogeny.
\end{lemma}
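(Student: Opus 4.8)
The plan is to show that if $s$ is not an isogeny, then $\mu$ does not generate $K$ as a $\QQ$-algebra, contradicting our standing hypothesis $K = \QQ(\mu)$. Since $E \times E$ and $A$ both have dimension $2$, the homomorphism $s \colon E^2 \to A$ is an isogeny if and only if it is surjective, equivalently if and only if its image is all of $A$. So suppose for contradiction that $B' := s(E^2) = z(E) + wz(E)$ is a proper abelian subvariety of $A$; since $A$ has dimension $2$, this forces $\dim B' \leq 1$. The key observation is that $B'$ is visibly stable under $w$: indeed $w(z(E)) \subseteq wz(E) \subseteq B'$ and $w(wz(E)) = w^2 z(E)$, which I will show also lies in $B'$ using the fact that $w$ satisfies a quadratic (or at worst low-degree) relation coming from the minimal polynomial of $\mu$ over $\QQ$ — more precisely, from the observation below that $w$ is a root of the minimal polynomial of $\mu^2$ composed appropriately. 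More robustly: set $A' = z(E) + wz(E) + w^2 z(E) + \cdots$, the smallest $w$-stable abelian subvariety of $A$ containing $z(E)$; this is a finite sum because $\End(A) \otimes \QQ$ is finite-dimensional, and it equals $B'$ precisely when $wz(E) \subseteq z(E) + (\text{nothing new})$, so let me instead argue directly with $B'$ and the structure of $\iota_0(\mu)$.

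Here is the cleaner route. Consider the abelian subvariety $N := (0 \times A') \subseteq E \times A$ where $A' \subseteq A$ is the smallest $w$-stable abelian subvariety containing the image of $z$; equivalently consider $N' := \{0\} \times \overline{z(E) + wz(E)}$. The point is to produce a proper nonzero $\iota_0(\mu)$-stable abelian subvariety of $E \times A$ that is defined over $\QQ$-linearly in terms of $\mu$, which then contradicts the simplicity of $\oJ$ as a CM abelian variety — wait, $\oJ$ need not be simple. So instead I contradict $K = \QQ(\mu)$ directly: if $s$ is not an isogeny, I claim $\QQ[\iota_0(\mu)] \subsetneq \iota_0(K)$ is impossible, but rather that $\iota_0(\mu)$ preserves the proper nonzero subvariety $V := \{0\} \times A'$ with $\dim A' \leq 1$, and since $\iota_0$ identifies $K$ with $\QQ[\iota_0(\mu)] = \End^0(\oJ) \cap \QQ[\iota_0(\mu)]$ acting on $H_1(\oJ, \QQ) \cong K$ as a faithful simple $K$-module, $K$ cannot preserve a proper nonzero subspace. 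Concretely: $H_1(\oJ,\QQ)$ is a $1$-dimensional $K$-vector space on which $\mu$ acts by multiplication, hence has \emph{no} proper nonzero $\QQ[\mu]$-stable subspace; but $V = \{0\}\times A'$ would give one, provided I check $V \neq 0$ (clear, as $z(E) \subseteq A'$, and if $z = 0$ then $s = 0$ certainly not an isogeny, but then also $\iota_0(\mu)$ has the block form $\left(\begin{smallmatrix} x & y \\ 0 & w\end{smallmatrix}\right)$ and $H_1(E,\QQ)$ is $\mu$-stable, again a contradiction) and $V \neq E \times A$ (clear since $\dim V = \dim A' \leq 1 < 4$).

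The one technical point to nail down is the $w$-stability of $z(E) + wz(E)$, i.e.\ that this already absorbs $w^2 z(E)$. For this I will use that $\mu$ satisfies its minimal polynomial $m(T) \in \QQ[T]$ of degree $6$ over $\QQ$; looking at the lower-left block of $m(\iota_0(\mu)) = 0$ and the lower-right block, one extracts a polynomial identity showing $w^k z$ for $k \geq 2$ is a $\QQ$-linear combination of $z$ and $wz$ (since $\mu^2$ generates the degree-$3$ field $K_+$, $w$ roughly satisfies a cubic, giving $w^2 z, w^3 z, \dots \in \QQ z + \QQ wz + \QQ w^2 z$ — one then has to also fold in the $y$ block, but an induction on powers of $\iota_0(\mu)$ applied to the vector $(0, z)^{t}$, or rather tracking the subvariety generated by $z(E)$ under $\iota_0(\mu)$-iteration, closes this up in at most $6$ steps regardless). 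I expect \textbf{this closure argument} — verifying that the subvariety generated by $z(E)$ under the dynamics of $\iota_0(\mu)$ has image in $A$ of dimension $\leq 1$ when $s$ fails to be an isogeny, rather than potentially filling up $A$ — to be the only real content; once it is in place, the contradiction with the $1$-dimensionality of $H_1(\oJ,\QQ)$ over $K = \QQ(\mu)$ is immediate. In fact the simplest packaging is: the $K$-module $H_1(\oJ,\QQ)$ decomposes compatibly with $\oJ \cong E \times A$, the subspace $\{0\}\times H_1(A,\QQ)$ is $w$-stable (being $\mathrm{pr}_A$-coherent) but generally not $\mu$-stable; the $\QQ[\mu]$-span of $\{0\}\times z(H_1(E,\QQ))$ inside $H_1(\oJ,\QQ)$ is a nonzero $\QQ[\mu] = K$-submodule, hence all of $H_1(\oJ,\QQ)$; projecting to $H_1(A,\QQ)$ shows $s$ is surjective on homology, hence $s$ is an isogeny. $\qed$
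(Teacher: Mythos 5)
Your overall strategy (a nonzero ``$\mu$-stable subobject'' of $\oJ$ must be everything because $[K:\QQ]=2\dim\oJ$) is sound and genuinely different from the paper's, but as written the proof has two real gaps. First, the object you lean on, $H_1(\oJ,\QQ)$, does not exist: $\oJ$ is the reduction modulo $\mathfrak{p}$ and lives in characteristic $p$, so the homomorphisms $z,w,s$ act on no singular homology. The natural replacement is the rational Tate module $V_\ell(\oJ)$ for $\ell\neq p$, but there the key claim fails as stated: $V_\ell(\oJ)$ is free of rank one over $K\otimes_\QQ\QQ_\ell$, which is in general a \emph{product} of fields, so it can have plenty of proper nonzero $\QQ_\ell[\iota_0(\mu)]$-stable submodules and your dimension count no longer rules out the subspaces you construct. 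The statement you actually need should be made at the level of abelian subvarieties: a nonzero abelian subvariety $X\subseteq\oJ$ stable under the $K$-action inherits an embedding $K\hookrightarrow\End(X)\otimes\QQ$, forcing $6\leq 2\dim X$, hence $X=\oJ$. That fix is available, but it is not what you wrote.

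Second, even granting such a ``rank one'' principle, your final step overreaches. The $\QQ[\mu]$-span of $\{0\}\times z(E)$ (or of its homology/Tate module) projects to $A$ inside $\sum_{j\geq 0}w^jz(E)$ — every word in the blocks $x,y,z,w$ that lands in the $A$-component has the form $w^jz\circ(\text{map }E\to E)$ — so knowing the span is everything only gives $\sum_j w^jz(E)=A$, not that $z(E)+wz(E)=A$, i.e.\ not that $s$ is an isogeny. The missing closure $w^2z(E)\subseteq z(E)+wz(E)$ is exactly what you flag as ``the only real content,'' and your sketch for it (that $w$ ``roughly satisfies a cubic'' extracted from $m(\iota_0(\mu))=0$) is not substantiated: the lower blocks of that identity mix $y$- and $z$-terms and yield no such relation for $w$ alone. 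The gap can in fact be closed cheaply: if $s$ is not an isogeny and $z\neq 0$, its image is a $1$-dimensional abelian subvariety containing the elliptic curve $z(E)$, hence equals $z(E)$, so $wz(E)\subseteq z(E)$ and the whole sum collapses to $z(E)\neq A$, giving your contradiction — but this observation is absent. For comparison, the paper avoids all of this: it shows $z\neq 0$, chooses a complementary elliptic curve $E'\subseteq A$ with $E\times E'\to A$ an isogeny, and observes that $wz(E)\subseteq z(E)$ would make $\iota'(\mu)$ block-triangular with a $1\times 1$ entry $\delta\in\End(E')\otimes\QQ$ that is a root of the irreducible degree-$6$ minimal polynomial of $\mu$, which is impossible; your route, once repaired as above, is a more module-theoretic variant of the same underlying idea.
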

\begin{proof}
We will prove that $z$ is not the zero map,
and that the image $wz(E)$ of $wz$ is not contained
in $z(E)$.
It then follows that the image of $s$ has dimension $2$
and hence $s$ is an isogeny.

Suppose that $z$ is the zero map.
Then \eqref{eq:xyzw} gives that $x\in \mathcal{B}$
    is a root of
    the minimal polynomial of $\mu$, which is irreducible of degree $6$, contradiction.
    Therefore, $z$ is non-zero and
    $z(E)\subset A$ is an elliptic curve.

    Now let $E'\subset A$ be an elliptic curve such that
  $
	    s':E\times E' \rightarrow A$,
	    given by
    	$(Q,R) \mapsto z(Q)+R
 $
    is an isogeny.
It follows that we have an
isogeny
\begin{align*}
	F' = 1 \times s' = \homEEEtoEA{1}{0}{0}{0}{z}{1}
    : E\times E\times E' &\longrightarrow E\times A  \text{\;\;given by\;\;} 
	(P,Q,R) \longmapsto (P, z(Q)+R)
\end{align*}
and hence a
further embedding
$
\iota_1' : \text{End}(E\times A) \rightarrow \text{End}(E\times E\times E')\otimes \mathbb{Q}, \;
f \mapsto (F')^{-1} f F'$.

Let
$\iota' = \iota_1'\circ\iota_0 : \mathcal{O}\rightarrow \text{End}(E\times E\times E')\otimes \mathbb{Q}$. Next, we compute the matrix $\iota'(\mu)$.
The first column is
\begin{equation}\label{eq:firstcolumn}
(F')^{-1}\xyzw F'\left(\begin{array}{c} 1 \\ 0 \\ 0
\end{array}\right) = \homEEEtoEA{1}{0}{0}{0}{z}{1}^{-1}
\left(\begin{array}{c} x \\ \homEA{z}\end{array}\right) = \left(\begin{array}{c} x \\ 1 \\ 0
\end{array}\right).
\end{equation}

Now suppose that $wz(E)$ is contained in $z(E)$.
Then we get an element
$z^{-1}wz\in \mathcal{B}$ and hence
$$
	\iota'(\mu)=\left(\begin{array}{ccc}x & * & * \\ 1 & z^{-1}wz & * \\ 0 & 0 & \delta\end{array}\right)\quad\mbox{for some $\delta\in \End(E')\otimes \QQ$}.
	$$
But then $\delta$ is a root of the minimal polynomial of $\mu$, which is a contradiction,
hence $wz(E)$ is not contained in $z(E)$
and the image of $s$ has dimension~$2$.
\end{proof}
It follows that we have an
isogeny
\begin{align*}
	F = 1 \times s = \homEEEtoEA{1}{0}{0}{0}{z}{\!\! wz \!\!}
   \ \  : \ \  E^3 &\longrightarrow E\times A \text{\;\;given by\;\;} 
	(P,Q,R) \longmapsto (P, s(Q,R))
\end{align*}
and hence a
further embedding
$
\iota_1 : \text{End}(E\times A) \rightarrow \text{End}(E^3)\otimes \mathbb{Q}\cong\Mat{3}(\mathcal{B})
$
given by
$
f \mapsto F^{-1} f F.
$
Let $\iota = \iota_1\circ\iota_0 : \mathcal{O}\hookrightarrow \Mat{3}(\mathcal{B})$. Let $n$ be a positive integer such that $[n]\cdot \ker(s)=0$
(from Lemma~\ref{lem:pol_matrix} onwards we will use a specific~$n$).
Then there exists an isogeny $\tilde{s}:\,A\rightarrow E\times E$ such that $s\cdot\tilde{s}=[n]$. 
\begin{lemma}\label{lem:matrix}
	We have
	$$
	\iota(\mu)=\left(\begin{array}{ccc}x & a & b \\ 1 & 0 & c/n \\ 0 & 1 & d/n\end{array}\right), \text{\;\;where\;\;} x,a,b,c,d\in \mathcal{R}.
	$$
	
\end{lemma}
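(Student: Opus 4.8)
The plan is to compute the matrix $\iota(\mu) = F^{-1}\iota_0(\mu) F$ column by column, using the explicit shape of $F = 1\times s$ and the fact that $s\cdot\tilde s = [n]$, so that $F^{-1}$ equals $\frac1n(1\times\tilde s)$ up to the obvious block structure. First I would record that $F$ has the block form $\begin{pmatrix} 1 & 0 \\ 0 & s\end{pmatrix}$ where the $(2,2)$-block $s = \begin{pmatrix} z & wz\end{pmatrix}: E\times E\to A$, and correspondingly $F^{-1} = \begin{pmatrix} 1 & 0 \\ 0 & \frac1n\tilde s\end{pmatrix}$ as a map $E\times A \to E^3\otimes\QQ$. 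Then $\iota(\mu) = F^{-1}\begin{pmatrix} x & y \\ z & w\end{pmatrix}F$, and I would multiply these three matrices out, keeping the middle $A\leftrightarrow E^2$ identifications honest.

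For the first column, the computation is exactly \eqref{eq:firstcolumn} (with $*$'s now filled in by the genuine $F$ rather than $F'$): applying $\iota_0(\mu)F$ to $(1,0,0)^t$ gives $(x, z)^t \in E\times A$, and applying $F^{-1}$ sends $z\in A$ back to $(1,0)^t\in E^2$ since $s(1,0) = z$ — more precisely $\frac1n\tilde s(z(P)) = \frac1n[n](P,0)$ wait, one must be slightly careful: $\tilde s\circ z$ need not be literally $[n]$ on the first factor, but $s\circ\tilde s = [n]_A$ forces $F^{-1}\circ F = \mathrm{id}$, so $F^{-1}$ applied to the image $s(P,Q)$ returns $(P,Q)$; hence $F^{-1}$ applied to $z(P) = s(P,0)$ returns $(P,0)$. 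This gives the first column $(x,1,0)^t$ with $x\in\mathcal R$. For the second and third columns, I would apply $\iota_0(\mu)F$ to $(0,1,0)^t$ and $(0,0,1)^t$: these land in $E\times A$ with top entry in $\mathcal R = \End(E)$ (the entries labelled $a,b$) and the $A$-components being $wz(Q)$ and $w\cdot wz(R) = w^2z(R)$ respectively. Pulling back through $F^{-1} = \frac1n(1\times\tilde s)$ turns $wz\in\Hom(E,A)$ into $\frac1n\tilde s wz\in\Hom(E,E^2)\otimes\QQ$; but note $\tilde s\circ s = [n]$ on $E^2$ and $s(0,1) = wz$, so $\tilde s(wz) = [n](0,1)^t$, giving the second column $(a, 0, 1)^t$ exactly. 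For the third column we get $\frac1n\tilde s(w^2 z)$; writing $w\cdot wz = w\cdot s(0,1)$, push it through: its $E^2$-coordinates under $\frac1n\tilde s$ are some $(c/n, d/n)$ with $c,d\in\mathcal R$, since $\tilde s w^2 z$ is an honest element of $\Hom(E,E^2) = \mathcal R^2$. Likewise $a$ is the top-right entry of $\iota_0(\mu)F$ at $(0,1,0)^t$, i.e. $y\cdot wz \in \End(E) = \mathcal R$, and $b = y\cdot w^2 z\in\mathcal R$.

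The one genuine subtlety — and the main thing to get right rather than the main \emph{obstacle} — is bookkeeping the identification $A \simeq_{\QQ} E^2$ induced by $s$: one must consistently use that $F^{-1}F = \mathrm{id}$ (equivalently $\tilde s s = [n]_{E^2}$ and $s\tilde s = [n]_A$) so that the middle and bottom rows come out with denominator exactly dividing $n$, and the entries that we claim are integral (namely $x,a,b$, living in the top row, and the $1$'s produced by $\tilde s s(e_i) = [n]e_i$) really are in $\mathcal R$ rather than $\mathcal R\otimes\QQ$. Concretely: the first two columns of $\iota(\mu)$ have no denominators because column $i$ ($i=1,2$) is obtained by feeding a \emph{basis vector of the source of} $s$ through $s$ and then $\tilde s$, which is multiplication by $[n]$; only the third column, where we apply $w$ first and thus leave the image of the chosen splitting, picks up the $1/n$. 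The remaining content — that $y\cdot wz$ and $y\cdot w^2z$ lie in $\mathcal R = \End(E)$, and that the components of $\tilde s\circ w^2 z$ lie in $\mathcal R$ — is immediate since all of $z, w, y, \tilde s$ are honest morphisms (not just quasi-isogenies) and composition of morphisms $E\to E$ lands in $\End(E)$. I would close by noting this is exactly the asserted form with $x,a,b,c,d\in\mathcal R$.
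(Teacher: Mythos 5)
Your proposal is correct and follows essentially the same route as the paper: compute $\iota(\mu)=F^{-1}\iota_0(\mu)F$ column by column, get the exact columns $(x,1,0)^t$ and $(\ast,0,1)^t$ because $z=s(1,0)$ and $wz=s(0,1)$ are pulled back by $F^{-1}=1\times\tfrac1n\tilde s$ to basis vectors, and observe that only the third column picks up the denominator $n$, with all numerators in $\mathcal{R}$ since they are compositions of honest morphisms. One harmless bookkeeping slip: the top entries are $a=yz$ and $b=ywz$ (the top row of $\iota_0(\mu)F$ is $(x,\,yz,\,ywz)$), not $y\cdot wz$ and $y\cdot w^2z$ as you wrote, but since the lemma only asserts $a,b\in\mathcal{R}$ this does not affect the argument.
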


\begin{proof}
The first column is already computed in \eqref{eq:firstcolumn}, which is also valid with $F$ instead of~$F'$.
For the second column, we compute
$$
F^{-1}\xyzw F\left(\begin{array}{c} 0 \\ 1 \\ 0
\end{array}\right) = \homEEEtoEA{1}{0}{0}{0}{z}{\!\! wz \!\!}^{-1}
\left(\begin{array}{c} * \\ \homEA{\!\! wz \!\!}\end{array}\right) = \left(\begin{array}{c} * \\ 0 \\ 1
\end{array}\right).$$
As $F^{-1} = 1\times \frac{1}{n}\tilde{s}$,
we get that the entries of the first row of $\iota(\mu)$
are in $\mathcal{R}$ and the others are in $\frac{1}{n}\mathcal{R}$.
\end{proof}

\section{Bounds on the coefficients}\label{sec:coeffbound}

Our goal in this section 
is to prove the following.
\begin{proposition}\label{prop:field}
If $p>\mainbound$, then 
the image $\iota(\mathcal{O})$ is
inside
the ring of $3\times 3$ matrices over a field $\Bone\subset \mathcal{B}$
of degree $\leq 2$ over $\QQ$.
\end{proposition}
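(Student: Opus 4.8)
The plan is to bound the "size" of the entries of the matrix $\iota(\mu)$ from Lemma~\ref{lem:matrix}, and deduce that when $p$ is large these entries are forced to commute and lie in a small subfield of $\mathcal{B}$. The natural size function is the reduced norm (or, equivalently, the degree) on $\mathcal{R} = \End(E)$: for $\alpha\in\mathcal{R}$ write $\deg(\alpha)$ for the degree of the isogeny (zero if $\alpha = 0$). First I would translate the defining relation: since $\mu^2$ is totally real with $-\frac12\Tr_{K/\QQ}(\mu^2) = B$, the element $\mu$ satisfies a relation over $\ZZ$ that pins down the trace and norm data of $\mu^2$; concretely the matrix $N := \iota(\mu)^2 = \iota(\mu^2)$ has the property that $\iota(\mu^2)$ is the image of a totally negative real element of small trace, so each "diagonal contribution" is controlled by $B$. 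Using Lemma~\ref{lem:matrix} one computes $\iota(\mu)^2$ explicitly as a $3\times 3$ matrix over $\mathcal{B}$ whose entries are quadratic expressions in $x,a,b,c,d,n$; comparing with the known shape of $\iota(\mu^2)$ (it must be a matrix that represents multiplication by a totally real element, hence is "symmetric" with respect to the Rosati involution coming from the product polarization on $E^3$ pulled back through $F$) gives a system of equations relating these entries.

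Next I would extract quantitative bounds. The key input is that $\mu^2$ is totally negative with $\Tr_{K_+/\QQ}(\mu^2) = -B$ and $\Tr_{K_+/\QQ}(\mu^4) = a^2+b^2+c^2 \le B^2$ (notation as in the proof of Lemma~\ref{lem:Bnot1}), so all symmetric functions of the conjugates of $\mu^2$ are polynomially bounded in $B$. Via the Rosati involution and the positivity of the Rosati trace form on $\End(E^3)\otimes\QQ$, the trace $\Tr(\iota(\mu^2)\cdot\overline{\iota(\mu^2)})$ is a positive quadratic form in the degrees of all the entries of $\iota(\mu)$, and it equals something like $2\Tr_{K/\QQ}(\mu^4) + (\text{cross terms})$, which is $O(B^2)$ up to the bookkeeping of $n$ and $\deg s$. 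Hence $\deg(x), \deg(a),\dots,\deg(d)$, and also $n$, are all bounded by an explicit polynomial in $B$ — and a careful accounting of the constants is exactly what produces the exponent $10$ and the constant $\frac18$ in $\mainbound$. This quantitative bookkeeping, keeping every constant explicit while juggling the factor $n$ from $\ker(s)$, is where I expect the real work to be.

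Finally, I would run the standard Goren--Lauter dichotomy. If $E$ is ordinary (or $\mathcal{B}$ is already a field), then $\mathcal{B}$ is a field of degree $\le 2$ and $\Bone = \mathcal{B}$ works immediately, so the content is the supersingular case where $\mathcal{B}$ is a quaternion algebra ramified at $p$. There, two elements $\alpha,\beta\in\mathcal{R}$ that fail to commute satisfy $\deg(\alpha\beta-\beta\alpha) \ge$ (a quantity divisible by $p$, coming from the ramification), so once all the relevant degrees are $< p$ — which holds precisely when $p > \mainbound$ — the entries $x,a,b,c,d$ must pairwise commute, hence generate a commutative subring of $\mathcal{R}$, whose fraction field $\Bone$ has degree $\le 2$ over $\QQ$. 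Since $\iota(\mu)$ then has all entries in $\Bone$ and $\iota(\mathcal{O}) = \ZZ[\iota(\mu)]$ (as $K=\QQ(\mu)$ and $\mathcal{O}$ is generated by $\mu$ over $\ZZ$ after possibly clearing a denominator — here one uses that $\mathcal{O}\subseteq\ZZ[\mu]\otimes\ZZ[1/N]$ for a controlled $N$, or simply that $\iota$ is a ring map and $\iota(\mu)$ generates), the whole image $\iota(\mathcal{O})$ lies in $\Mat{3}(\Bone)$, as claimed. The main obstacle, to repeat, is the explicit constant-chasing in the middle step; the commutator/ramification argument and the ordinary/supersingular split are routine adaptations of~\cite{GorenLauter}.
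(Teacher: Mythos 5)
Your overall strategy is the same as the paper's (ordinary/supersingular dichotomy, with the supersingular case handled by norm bounds on the entries of $\iota(\mu)$ plus the Goren--Lauter fact, Lemma~\ref{lem:quaternionfield}, that small-norm elements of an order in $B_{p,\infty}$ commute), but the step that actually carries the proposition --- the derivation of explicit norm bounds on $x,\beta,d$ and on the integer $n$ --- is asserted rather than proved, and the mechanism you sketch for it does not work as stated. You propose to bound all degrees by $O(B^2)$ from positivity of the Rosati trace form applied to $\iota(\mu^2)$. Two problems: first, the Rosati involution on $\Mat{3}(\mathcal{B})$ is $M\mapsto\lambda^{-1}M^\vee\lambda$, where $\lambda$ is the pulled-back polarization of Lemma~\ref{lem:pol_matrix}; its entries $\alpha,\beta,\gamma$ are themselves unknowns that must be controlled, so ``a positive quadratic form in the degrees of the entries'' is not what you get for free. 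Second, the true bounds are not $O(B^2)$: in the paper one only gets $\N(d/n)\leq 2B$ with $n\leq\frac14 B^3$, hence $\N(d)\leq\frac18 B^7$, and it is the pair $\N(\beta)\N(d)\leq\frac{1}{32}B^{10}<p/4$ that forces the exponent $10$; a claim that every entry has degree $O(B^2)$ is simply false for $c$ and $d$ (only $c/n$, $d/n$ are small), and Lemma~\ref{lem:quaternionfield} needs products of norms below $p/4$, not individual norms below $p$.

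What is missing, concretely, is the chain of identities that makes the non-negativity argument possible: applying Lemma~\ref{lem:dual} to $\mu$ itself gives \eqref{eq:identities} ($\Tr(x)=0$, $a=-\alpha\in\ZZ$, $b=-\beta$ with $\Tr(\beta)=0$, and $\gamma=-\alpha c/n-\beta d/n$), applying Lemma~\ref{lem:trace} to $\mu$ gives $\Tr(d)=0$, and applying it to $\mu^2$ and completing the square yields the identity \eqref{eq:trace},
\[
B=\N(x)+2\alpha+\frac{\gamma}{\alpha}+\frac{n}{\alpha^{2}}+\N\Bigl(\frac{\beta}{\alpha}+\frac{d^{\vee}}{n}\Bigr),
\]
in which every term is non-negative; only from this do the bounds $\N(x)\leq B$, $2\alpha\leq B$, $\N(\beta)\leq\frac14 B^3$, $n\leq\frac14 B^3$ (this is \eqref{eq:n}, using $n=\alpha\gamma-\N(\beta)$) and $\N(d)\leq\frac18 B^7$ follow. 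Your ``careful accounting of constants'' is exactly this computation, and without it the proposition with the stated bound $\mainbound$ is not established. (A small additional slip: $\iota(\mathcal{O})=\ZZ[\iota(\mu)]$ need not hold; the correct and easier statement is that $\mathcal{O}\subset K=\QQ(\mu)$, so $\iota(\mathcal{O})\subset\QQ[\iota(\mu)]\subset\Mat{3}(\Bone)$ once the entries of $\iota(\mu)$ lie in $\Bone$ --- no clearing of denominators is needed.)
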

If $\mathcal{B}$ is a field, then we can take $\mathcal{B}_1=\mathcal{B}$.
So in the proof of Proposition~\ref{prop:field}, we
assume that $E$ is supersingular and $\mathcal{B}$
is a quaternion algebra. Then $\mathcal{B}$ is $B_{p,\infty}$,
the quaternion algebra ramified exactly at $p$ and $\infty$.
Let $\Tr$ and $\N$ denote the \emph{reduced} trace and norm on $\mathcal{B}$,
and let $\cdot^\vee$ denote (quaternion) conjugation,
so for all $x\in \mathcal{B}$, we have $\N(x) = xx^\vee = x^\vee x$,
$\Tr(x) = x + x^\vee$, and $x^2 - \Tr(x) x + \N(x) = 0$.
Note that $\mathcal{B}=B_{p,\infty}$
is a quaternion algebra ramified at infinity,
hence a definite quaternion algebra,
so the norm $\N(x)$ is a non-negative number
and equal to zero if and only if $x=0$.

To prove Proposition~\ref{prop:field}, we use
the following result, which states that
small quaternions commute.
\begin{lemma}[Goren and Lauter]\label{lem:quaternionfield}
Let $\mathcal{R}$ be an order in the quaternion algebra $B_{p,\infty}$
and $x, y\in\mathcal{R}$. If
$\N(x)\N(y) < p/4$, then $x$ and $y$ commute.
\end{lemma}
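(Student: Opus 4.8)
The plan is to prove Lemma~\ref{lem:quaternionfield} by a direct norm-estimate argument inside the definite quaternion algebra $B_{p,\infty}$, exploiting the fact that the commutator $xy - yx$ is an element whose reduced norm is \emph{divisible by} $p$ whenever it is nonzero. First I would recall the standard fact about $B_{p,\infty}$: for any maximal (or indeed any) order, the reduced norm of a nonzero element is a positive integer (here I use that $\mathcal{R}\otimes\QQ = B_{p,\infty}$ is ramified at $\infty$, so $\N$ is positive definite; the same would work for a quaternion order in general after clearing denominators, but the excerpt has already reduced to the integral maximal-order situation). Then I would argue that if $x$ and $y$ do \emph{not} commute, the element $c := xy - yx \in \mathcal{R}$ is nonzero, and one shows $p \mid \N(c)$; since $\N(c)$ is then a positive integer divisible by $p$, we get $\N(c) \geq p$.

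The divisibility $p\mid\N(c)$ is the conceptual heart, and I expect it to be the main obstacle to state cleanly. One route: the commutator $c = xy-yx$ satisfies $\Tr(c) = 0$ (it is a ``pure'' quaternion, since $\Tr(xy) = \Tr(yx)$), so $\N(c) = -c^2$ as an element of the center $\QQ$. Reducing modulo $p$: the algebra $B_{p,\infty}\otimes_{\QQ}\QQ_p$ is a division algebra, but more usefully, reduction of a maximal order modulo $p$ gives an $\mathbb{F}_p$-algebra whose quotient by its radical is $\mathbb{F}_{p^2}$, which is commutative; hence the image of any commutator $xy-yx$ lies in the radical, and for a quaternion order the radical squared is zero, so $(xy-yx)^2 \equiv 0 \pmod p$, i.e. $\N(c) = -c^2 \equiv 0\pmod p$. (Alternatively, one can use that the only two-sided ideal of the maximal order of norm a power of $p$ is the unique prime above $p$, whose square is $p\mathcal{R}$.) Either way, $p\mid\N(c)$.

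Having established $\N(xy-yx)\geq p$ under the assumption that $x,y$ do not commute, I would finish by bounding $\N(xy-yx)$ from above in terms of $\N(x)\N(y)$. Using multiplicativity and the triangle-type inequality for the norm form, $\N(xy - yx) \leq \big(\sqrt{\N(xy)} + \sqrt{\N(yx)}\big)^2 = \big(2\sqrt{\N(x)\N(y)}\big)^2 = 4\,\N(x)\N(y)$. Combining, $4\,\N(x)\N(y) \geq \N(xy-yx)\geq p$, so $\N(x)\N(y)\geq p/4$, contradicting the hypothesis $\N(x)\N(y) < p/4$. Hence $x$ and $y$ commute, which is the assertion. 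The only slightly delicate point in the last step is justifying the inequality $\N(u-v)\leq(\sqrt{\N(u)}+\sqrt{\N(v)})^2$; this is exactly the statement that $\sqrt{\N(\cdot)}$ satisfies the triangle inequality, which holds because $\N$ is a positive-definite quadratic form on the four-dimensional $\RR$-vector space $B_{p,\infty}\otimes\RR$, so $\sqrt{\N(\cdot)}$ is a genuine Euclidean norm. I would state this as a one-line remark rather than expand it. Since this is a lemma attributed to Goren and Lauter, I would also simply cite \cite{GorenLauter} for the precise reference and keep the reproof brief.
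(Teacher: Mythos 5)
Your proof is correct, but it follows a genuinely different route from the one sketched in the paper (which is the original Goren--Lauter argument). The paper argues by volumes: if $x$ and $y$ do not commute, then $1,x,y,xy$ span a rank-$4$ lattice $L\subset\mathcal{R}$ whose covolume is at most $4\N(x)\N(y)$ by a Hadamard-type estimate, while $L$ sits inside a maximal order of covolume $p$, forcing $4\N(x)\N(y)\geq p$. You instead work with the commutator $c=xy-yx$ directly: $\Tr(c)=0$, so $\N(c)=-c^2$; the image of $c$ in the reduction of a maximal order modulo $p$ lies in the radical because the semisimple quotient $\mathbb{F}_{p^2}$ is commutative, and the radical squares to zero, giving $p\mid\N(c)$, hence $\N(c)\geq p$ when $c\neq 0$; the triangle inequality for the Euclidean norm $\sqrt{\N(\cdot)}$ on $B_{p,\infty}\otimes\RR$ then gives $\N(c)\leq 4\N(x)\N(y)$, and the two bounds contradict $\N(x)\N(y)<p/4$. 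Both arguments use ramification at $p$ in equivalent ways (covolume/discriminant of the maximal order versus commutativity of its residue field $\mathbb{F}_{p^2}$ together with $P^2=p\mathcal{O}_p$) and yield the same constant $p/4$; yours has the mild advantage of isolating the arithmetic input in the single divisibility statement $p\mid\N(xy-yx)$, while the covolume argument avoids any local analysis. One small point to make explicit: the lemma allows an arbitrary order $\mathcal{R}$, not a maximal one, so you should say that you embed $\mathcal{R}$ into a maximal order and run the mod-$p$ radical argument there (integrality of $\N(c)$ already follows from $c\in\mathcal{R}$); your parenthetical about ``the integral maximal-order situation'' currently blurs this, but it is a one-line fix, not a gap.
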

\begin{proof}
We give the main idea for completeness.
For details, see Lemma 2.1.1 and Corollary 2.1.2 of Goren and Lauter~\cite{GorenLauter}
and the proof of Lemma~9.5 of Streng~\cite{Streng}.

If $x$ and $y$ do not commute, then $1$, $x$, $y$, $xy$ span a $\ZZ$-lattice
$L\subset \mathcal{R}\subset B_{p,\infty}$ of
covolume $\leq 4\N(x)\N(y)$, while $\mathcal{R}$ is contained 
in a maximal order of covolume $p$. This is a contradiction if 
$\N(x)\N(y) < p/4$.
\end{proof}

Recall that $\oJ\cong E\times A$
as principally polarized abelian varieties,
where $A = (A, \lambda_A)$ is a principally polarized abelian
surface. In other words, the natural
polarization on $\oJ$ corresponds
to the product polarization $1\times \lambda_A$.

\begin{lemma} \label{lem:pol_matrix}
    The polarization induced by $1\times \lambda_A$ on $E^3$
    via the isogeny $F$ is 
	$$
	\lambda:=F^{\vee}(1\times\lambda_A)F=\left(\begin{array}{ccc} 1& 0&0\\0&\alpha & \beta \\
	0 & \beta^{\vee} & \gamma\end{array}\right)\quad \text{\;\;for some\;\;} \alpha,\gamma\in\mathbb{Z}_{>0}
	    \text{\;and\;} \beta\in\mathcal{R} \text{\;such that\;} \alpha\gamma-\beta\beta^{\vee}\in\ZZ_{>0}.
	$$
	 Here $F^\vee$ denotes the dual isogeny.  Let $n=\alpha\gamma-\beta\beta^{\vee}\in\ZZ_{>0}$.
    Then we have $GF = [n]$ for some isogeny~$G$, and therefore $[n]\ker(F)=0$.
\end{lemma}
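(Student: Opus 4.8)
The plan is to exploit the special shape of the isogeny $F = 1\times s$ with $s = \homEEA{z}{wz}$, together with basic functoriality of polarizations under pullback. First I would recall that for any isogeny $h\colon X\to Y$ and any polarization $\lambda_Y$ on $Y$, the pullback $h^\vee\lambda_Y h$ is a (symmetric, but possibly non-principal) polarization on $X$, and that a homomorphism of abelian varieties $E^3\to (E^3)^\vee$ is given, after identifying each $E$ with its own dual via the principal polarization of $E$, by a matrix in $\Mat{3}(\mathcal{R})$ that is \emph{Hermitian} with respect to quaternion conjugation $\cdot^\vee$ (this is the standard translation between polarizations on $E^n$ and positive-definite Hermitian matrices over $\mathcal{R}$; cf. the genus-two treatment in Goren--Lauter~\cite{GorenLauter}). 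Because $F$ has block-triangular form $\left(\begin{smallmatrix} 1 & 0\\ 0 & s\end{smallmatrix}\right)$ with the first block equal to the identity on the elliptic factor $E$ carrying its natural polarization, the $(1,1)$-entry of $\lambda = F^\vee(1\times\lambda_A)F$ is exactly $1$ and the $(1,j)$ and $(j,1)$ entries for $j=2,3$ vanish. This gives the claimed shape
$$
\lambda = \left(\begin{array}{ccc} 1 & 0 & 0\\ 0 & \alpha & \beta\\ 0 & \beta^\vee & \gamma\end{array}\right),
$$
where $\left(\begin{smallmatrix}\alpha & \beta\\ \beta^\vee & \gamma\end{smallmatrix}\right) = s^\vee\lambda_A s$ is the Hermitian matrix over $\mathcal{R}$ attached to the pullback of $\lambda_A$ along $s\colon E^2\to A$.

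Next I would check positivity and integrality. Since $\lambda_A$ is a polarization and $s$ is an isogeny, $s^\vee\lambda_A s$ is a polarization on $E^2$, hence the associated Hermitian form is positive definite; in particular its diagonal entries $\alpha,\gamma$ are totally positive elements of $\mathcal{R}\cap\QQ = \ZZ$, so $\alpha,\gamma\in\ZZ_{>0}$, and $\beta\in\mathcal{R}$. Positive definiteness of a $2\times 2$ Hermitian matrix over a definite quaternion algebra also forces the "determinant" $\alpha\gamma - \beta\beta^\vee$ (note $\beta\beta^\vee = \N(\beta)\in\ZZ_{\geq 0}$) to be positive; since it is a non-negative integer and cannot be $0$ (else the form would be degenerate, contradicting that $s$ is an isogeny), we get $n := \alpha\gamma - \beta\beta^\vee \in\ZZ_{>0}$.

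Finally, for the last sentence I would produce $G$ explicitly. Working in $\Mat{3}(\mathcal{B})$, the matrix $\left(\begin{smallmatrix}\alpha & \beta\\ \beta^\vee & \gamma\end{smallmatrix}\right)$ has inverse $\tfrac1n\left(\begin{smallmatrix}\gamma & -\beta\\ -\beta^\vee & \alpha\end{smallmatrix}\right)$; multiplying through by $n$ gives an adjugate matrix with entries in $\mathcal{R}$, which corresponds to an honest isogeny. Concretely, using that $\lambda$ is a polarization one has $F^\vee(1\times\lambda_A)F = \lambda$, and since $(1\times\lambda_A)$ is a principal polarization it is invertible; setting $G := \lambda^{-1}F^\vee(1\times\lambda_A)\cdot[n]$ (or equivalently, unwinding the adjugate computation above to see that $n\lambda^{-1}$ has entries in $\mathcal{R}$, so $G := n\lambda^{-1}F^\vee(1\times\lambda_A)$ is a genuine homomorphism $E\times A\to E^3$) gives $GF = n\lambda^{-1}(F^\vee(1\times\lambda_A)F) = n\lambda^{-1}\lambda = [n]$. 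In particular $\ker(F)\subseteq\ker([n])$, i.e. $[n]\ker(F) = 0$, as asserted.

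The main obstacle I anticipate is purely bookkeeping: making the dictionary between polarizations on $E^3$ and Hermitian matrices over $\mathcal{R}$ fully precise — in particular checking that the duality identification $E\xrightarrow{\sim}E^\vee$ is compatible across all three factors so that $F^\vee$ really is represented by the conjugate-transpose of the matrix of $F$, and confirming the sign/positivity conventions so that "polarization" translates to "positive-definite Hermitian." Once that dictionary is in place, the block-triangular shape of $F$ makes the vanishing of the first row and column and the value $1$ in the corner immediate, and the existence of $G$ with $GF=[n]$ is just the adjugate identity. None of this requires $E$ to be supersingular, so the lemma holds uniformly.
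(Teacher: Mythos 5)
Your argument is correct and is essentially the paper's own proof: the first row and column are computed directly from the block shape of $F$, the symmetry ($\alpha,\gamma\in\ZZ$, appearance of $\beta^\vee$) and positive-definiteness are quoted from the standard dictionary between polarizations of $E^n$ and positive-definite Hermitian matrices over $\End(E)$ (the paper cites Mumford for this where you cite the Goren--Lauter-style dictionary), and your $G := n\lambda^{-1}F^\vee(1\times\lambda_A)$ is literally the adjugate matrix $\left(\begin{smallmatrix} n& 0&0\\0&\gamma & -\beta \\ 0 & -\beta^{\vee} & \alpha\end{smallmatrix}\right)F^{\vee}(1\times\lambda_A)$ written in the paper. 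The only cosmetic slip is calling $F=\mathrm{diag}(1,s)$ ``block-triangular'' rather than block-diagonal, which does not affect anything.
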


\begin{proof}
The first column and row of $\lambda$ are easy to compute. 
The symmetry (i.e., $\alpha,\gamma\in\ZZ$ and the occurrence of $\beta^\vee$)
is Mumford \cite[(3) on page 190]{Mumford}
(equivalently the first part of Application III on page 208 of loc.~cit.).
The positive-definiteness (which implies $\alpha,\gamma,n > 0$)
is the last paragraph of Application III on page 210 of loc.~cit.).
It is now straightforward to compute
$GF=[n]$ for
	$$
	G = \left(\begin{array}{ccc} n& 0&0\\0&\gamma & -\beta \\
	0 & -\beta^{\vee} & \alpha\end{array}\right)F^{\vee}(1\times\lambda_A).
	$$
It follows that the kernel of $F$ is contained in the kernel of~$[n]$.
\end{proof}
From now on, take $n$ as in Lemma~\ref{lem:pol_matrix}.

\begin{lemma}[{Proposition $4.8$ in \cite{BCLLMNO}}]
\label{lem:dual}
For every $\eta\in K$, the complex conjugate $\overline{\eta}\in K$ satisfies
\[\iota(\overline{\eta}) = \lambda^{-1} \iota(\eta)^{\vee}\lambda,\]
where for a matrix $M$, we use $M^{\vee}$ to denote the transpose
of $M$ with conjugate entries.
\end{lemma}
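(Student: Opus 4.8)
The plan is to reduce the statement to the classical fact that on a CM abelian variety the Rosati involution attached to \emph{any} polarization restricts to complex conjugation on the CM field, and then to transport that fact along the isogeny $F$. To begin, consider $J$ itself: the canonical principal polarization $\lambda_J$ of the Jacobian gives a Rosati involution $f\mapsto f^{\dagger}:=\lambda_J^{-1}\widehat{f}\lambda_J$ on $\End(J_{\overline M})\otimes\QQ$, where $\widehat{f}$ is the dual isogeny. This involution is positive, so its restriction to $K\subseteq\End(J_{\overline M})\otimes\QQ$ is a positive involution of $K$ fixing $\QQ$; the only positive involution of a CM field is complex conjugation (since $K$ is not totally real, the involution is nontrivial, and it must fix the maximal totally real subfield $K_+$, forcing it to be complex conjugation). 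Hence $\eta^{\dagger}=\overline{\eta}$ for every $\eta\in K$, in particular for every $\eta\in\mathcal{O}$.

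Next I would pass to the reduction at $\frp$. We have arranged that $\frp$ is a prime of good reduction for $J$, that $\oJ\cong E\times A$ as principally polarized abelian varieties with the canonical polarization of $\oJ$ corresponding to $1\times\lambda_A$, and that $\iota_0$ is induced by reduction. Since reduction is a functor compatible with the formation of dual abelian varieties and of polarizations, the Rosati involution $\sigma$ on $\End(\oJ)\otimes\QQ$ attached to $1\times\lambda_A$ agrees on $\iota_0(K)$ with the reduction of $\dagger$; by the previous paragraph, $\sigma(\iota_0(\eta))=\iota_0(\overline{\eta})$ for all $\eta\in K$.

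Finally I would transport along $F$. Set $\Lambda:=F^{\vee}(1\times\lambda_A)F$, the pulled-back polarization on $E^3$; under $\End(E^3)\otimes\QQ\cong\Mat{3}(\mathcal{B})$ together with the product polarization $\lambda_0$ on $E^3$, the element $\Lambda$ is represented by the Hermitian matrix $\lambda$ of Lemma~\ref{lem:pol_matrix}, and the Rosati involution for $\Lambda$ takes the matrix form $M\mapsto\lambda^{-1}M^{\vee}\lambda$, where $M^{\vee}$ is the conjugate transpose (here one uses that $E$ carries its canonical polarization and that the associated Rosati involution of $\mathcal{B}$ is $\cdot^{\vee}$). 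On the other hand, using $\widehat{F^{-1}fF}=F^{\vee}\widehat{f}(F^{\vee})^{-1}$ together with $\Lambda=F^{\vee}(1\times\lambda_A)F$, a direct composition-of-isogenies computation shows that for $f\in\End(\oJ)\otimes\QQ$ the Rosati involution for $\Lambda$ sends $\iota_1(f)=F^{-1}fF$ to $\iota_1(\sigma(f))$. Combining the two descriptions and applying them to $f=\iota_0(\eta)$ yields
\[
\lambda^{-1}\iota(\eta)^{\vee}\lambda \;=\; \iota_1\bigl(\sigma(\iota_0(\eta))\bigr) \;=\; \iota_1\bigl(\iota_0(\overline{\eta})\bigr) \;=\; \iota(\overline{\eta}),
\]
which is the assertion (and it passes from $K$ to all of $K$ by linearity).

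I expect the only genuine subtlety to be the bookkeeping in the second step: one must make sure that the reduction map on endomorphisms is compatible with duals and with polarizations, so that the Rosati involutions really correspond, and that the isomorphism $\oJ\cong E\times A$ quoted from \cite{BCLLMNO} is indeed as principally polarized varieties, so that ``$1\times\lambda_A$'' is meaningful and equals the reduced canonical polarization. Granting these — both of which have been arranged or cited above — the remaining content is the purely formal matrix identity of the last step together with the standard translation between dual isogenies of endomorphisms of $E^{3}$ and conjugate transposes of matrices over $\mathcal{B}$.
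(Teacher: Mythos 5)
Your proposal is correct and follows essentially the same route as the paper: the paper's proof likewise takes as its one substantive input that complex conjugation is the Rosati involution of $1\times\lambda_A$ on $\iota_0(K)$ (the fact quoted from Proposition 4.8 of \cite{BCLLMNO}) and then carries out exactly your final conjugation-by-$F$ computation, inserting $F^{-\vee}F^{\vee}$ to rewrite $F^{-1}(1\times \lambda_A)^{-1}\iota_0(\eta)^{\vee}(1\times\lambda_A)F$ as $\lambda^{-1}\iota(\eta)^{\vee}\lambda$. Your first two paragraphs simply supply a proof (positivity of the Rosati involution on the simple CM abelian variety $J$, plus compatibility of reduction with duals and polarizations) of the ingredient the paper cites rather than reproves, so there is no substantive difference in approach.
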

\begin{proof}
Complex conjugation is the Rosati involution, so
$\iota_0(\overline{\eta}) = (1\times \lambda_A)^{-1}\iota_0(\eta)^{\vee}(1\times \lambda_A)$. Conjugation with $F^{-1}$ now yields exactly the equality in the lemma:
\begin{align*}
\iota(\overline{\eta})
&= F^{-1}(1\times \lambda_A)^{-1}  \iota_0(\eta)^\vee (1\times \lambda_A)F\\
&= (F^{-1}(1\times \lambda_A)^{-1} F^{-\vee}) (F^{\vee} \iota_0(\eta)^\vee F^{-\vee})
(F^{\vee}(1\times \lambda_A)F) \\
&= (F^{\vee}(1\times \lambda_A)F)^{-1} (F^{-1} \iota_0(\eta) F)^{\vee} 
(F^{\vee}(1\times \lambda_A)F) = \lambda^{-1} \iota(\eta)^{\vee}\lambda.\qedhere
\end{align*}
\end{proof}
\noindent For $\eta=\mu$, Lemma~\ref{lem:dual} reads
$-\lambda \iota(\mu)=\iota(\mu)^{\vee}\lambda$, that is,
$$
\left(\begin{array}{ccc}- x & -a & -b\\ -\alpha & -\beta & -\alpha c/n-\beta d/n\\ -\beta^{\vee}& -\gamma & -\beta^{\vee}c/n-\gamma d/n\end{array}\right)=
\left(\begin{array}{ccc} x^{\vee} & \alpha & \beta \\ a^{\vee} &\beta^{\vee}&\gamma\\b^{\vee} & (c^{\vee}/n)\alpha+(d^{\vee}/n)\beta^{\vee}&(c^{\vee}/n)\beta+(d^{\vee}/n)\gamma\end{array}\right).
$$
We conclude
\begin{align}
x^{\vee}&=-x\qquad\mbox{(equivalently $\text{Tr}(x)=0$),}\nonumber\\ 
a&=-\alpha\qquad\mbox{(and we already knew $\alpha\in\ZZ_{>0}$),}\nonumber\\
b&=-\beta=\beta^{\vee}\qquad\mbox{(hence $\Tr(\beta)=0$),}\label{eq:identities}\\
\gamma&=-\alpha c/n-\beta d/n\qquad\mbox{(and we already knew $\gamma\in\ZZ_{>0}$),}\nonumber\\
\Tr(\beta^{\vee}c)+\text{Tr}(\gamma d)&=0.\nonumber
\end{align}

\begin{lemma}[{Lemma~6.12 in \cite{BCLLMNO}}]\label{lem:trace}
For every $\eta\in K$, the trace $\mathrm{Tr}_{K/\QQ}(\eta)$
is equal to the sum of the reduced traces of the three diagonal entries
of $\iota(\eta)\in \Mat{3}(\mathcal{B})$.
\end{lemma}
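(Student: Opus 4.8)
The plan is to interpret both sides as the trace of one and the same endomorphism acting on a rational Tate module. Fix a prime $\ell\neq p$. For an abelian variety $X$ over a field of characteristic different from $\ell$ and $f\in\End(X)\otimes\QQ$, write $\mathrm{tr}(f)$ for the trace of $f$ on $V_\ell(X)=T_\ell(X)\otimes_{\ZZ_\ell}\QQ_\ell$; this is a rational number, independent of $\ell$, additive in $f$, and invariant under conjugation by isogenies, since an isogeny $g\colon Y\to X$ induces an isomorphism $V_\ell(Y)\xrightarrow{\sim}V_\ell(X)$ carrying the action of $g^{-1}fg$ to that of $f$. I would first record these standard facts and then run the following two computations.

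Step 1 (abelian variety side). Since $J$ has good reduction at $\frp$, the reduction map identifies $V_\ell(J)$ with $V_\ell(\oJ)$ compatibly with endomorphisms, so $\mathrm{tr}\bigl(\iota_0(\eta)\mid V_\ell(\oJ)\bigr)=\mathrm{tr}\bigl(\phi(\eta)\mid V_\ell(J)\bigr)$, where $\phi\colon K\hookrightarrow\End(J)\otimes\QQ$ is the CM embedding. Now $K$ acts faithfully on $V_\ell(J)$ and $\dim_{\QQ_\ell}V_\ell(J)=2\cdot 3=[K:\QQ]=\dim_{\QQ_\ell}(K\otimes\QQ_\ell)$, so $V_\ell(J)$ is free of rank $1$ over $K\otimes\QQ_\ell$. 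Hence $\mathrm{tr}\bigl(\phi(\eta)\mid V_\ell(J)\bigr)=\Tr_{K\otimes\QQ_\ell/\QQ_\ell}(\eta)=\Tr_{K/\QQ}(\eta)$.

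Step 2 (matrix side). Conjugating by the isogeny $F\colon E^3\to E\times A=\oJ$ gives $\mathrm{tr}\bigl(\iota(\eta)\mid V_\ell(E^3)\bigr)=\mathrm{tr}\bigl(\iota_0(\eta)\mid V_\ell(\oJ)\bigr)$. Under the canonical identifications $V_\ell(E^3)=V_\ell(E)^{\oplus 3}$ and $\End(E^3)\otimes\QQ\cong\Mat{3}(\mathcal{B})$, an element $M=(m_{ij})\in\Mat{3}(\mathcal{B})$ acts on $V_\ell(E)^{\oplus 3}$ with trace $\sum_{i=1}^{3}\mathrm{tr}\bigl(m_{ii}\mid V_\ell(E)\bigr)$, the off-diagonal entries contributing nothing. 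Finally, for $m\in\mathcal{B}$ the space $V_\ell(E)$ is a faithful $(\mathcal{B}\otimes\QQ_\ell)$-module of $\QQ_\ell$-dimension $2$, and since $\mathcal{B}$ has reduced degree $2$ over $\QQ$ (whether $\mathcal{B}$ is imaginary quadratic or the quaternion algebra $B_{p,\infty}$), $V_\ell(E)$ is up to isomorphism the standard module, so $\mathrm{tr}(m\mid V_\ell(E))=\Tr(m)$ is the reduced trace. Therefore $\mathrm{tr}\bigl(\iota(\eta)\mid V_\ell(E^3)\bigr)$ is the sum of the reduced traces of the three diagonal entries of $\iota(\eta)$, and chaining with Step 1 yields the claim.

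The point that requires care is the bookkeeping in Step 2: that the ring isomorphism $\End(E^3)\otimes\QQ\cong\Mat{3}(\mathcal{B})$ is compatible with the splitting $V_\ell(E^3)=V_\ell(E)^{\oplus3}$, and that it is the reduced trace of $\mathcal{B}$ (and not some other normalization) that shows up in the supersingular case. Both are resolved by unwinding the identifications; alternatively, in the supersingular case one may invoke the standard fact that the reduced trace of a central simple $\QQ$-algebra restricts to the field trace on any subfield of full reduced degree, applied to $\iota(K)\subset\Mat{3}(B_{p,\infty})$ together with the additivity of reduced trace over diagonal blocks.
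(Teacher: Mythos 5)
Your proof is correct and follows essentially the same route as the paper: both identify $\Tr_{K/\QQ}(\eta)$ with the trace of the action on the $\ell$-adic (rational) Tate module of $J$, transport this through good reduction and conjugation by the isogeny $F$, and then observe that the trace of a matrix in $\Mat{3}(\mathcal{B})$ on $V_\ell(E)^{\oplus 3}$ is the sum of the reduced traces of its diagonal entries. The only (harmless) difference is that you work with $V_\ell$ and any $\ell\neq p$, whereas the paper picks $l\nmid np$ so that $F$ is an isomorphism on integral Tate modules.
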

\begin{proof}
Choose a prime $l\nmid np$. Then $\mathrm{Tr}_{K/\QQ}(\eta)$ equals
the trace of $\eta$ when acting on $T_l(J)\otimes \QQ$,
where $T_l(J)$ is the $l$-adic Tate module of~$J$.
This action is preserved by reduction modulo~$\frp$.
Moreover, the isogeny $F$ induces an isomorphism of $l$-adic Tate modules,
hence $\mathrm{Tr}_{K/\QQ}(\eta)$ equals
the trace of $\iota(\eta)$ when acting on $T_l(E\times E\times E)\otimes \QQ$.
The latter trace is exactly the sum of the traces of the actions
of the diagonal entries of $\iota(\eta)$ on $T_l(E)\otimes \QQ$,
which are the reduced traces.
\end{proof}

\begin{remark}
Lemma~6.12 in \cite{BCLLMNO} follows from a special case of Lemma~\ref{lem:trace} in which $\eta$ is an element of the totally real cubic subfield $K_+$ of $K$ and the diagonal entries of $\iota(\eta)$ are integers.  
\end{remark}

Since both $\text{Tr}_{K/\QQ}(\mu)$ and $\Tr(x)$ are~$0$,
Lemma~\ref{lem:trace} applied to $\mu$ gives 
\begin{equation}\label{eq:traced}
\text{Tr}(d)=0.
\end{equation}

Let $B = -\frac{1}{2}\text{Tr}_{K/\mathbb{Q}}(\mu^2)\in\ZZ_{>0}$.
Then Lemmas \ref{lem:trace} and~\ref{lem:matrix}
give
\begin{align}
B = & -\frac{1}{2}\left(\text{Tr}(x^2)+2\text{Tr}(a)+2\text{Tr}\Bigl(\frac{c}{n}\Bigr)+\text{Tr}\Bigl(\frac{d^2}{n^2}\Bigr)\right). \\
\intertext{On the other hand, the equality \eqref{eq:traced} implies $d^\vee = -d$ hence we have $\text{Tr}(d^2/n^2) = -2\text{N}(d/n)$ as $n\in\ZZ_{>0}$. Similary, by \eqref{eq:identities} we have $\text{Tr}(x^2) = -2\text{N}(x)$. Moreover, the equality $\gamma=-\alpha c/n-\beta d/n$ in \eqref{eq:identities} and the fact that $\gamma$ and $\alpha$ are integers give $\text{Tr}(c/n) = - \text{Tr}(\gamma/\alpha + \beta d/(n\alpha)) = -2\gamma/\alpha-\text{Tr}(\beta d/(n\alpha))$. Therefore, by $a = -\alpha\in\ZZ$ in \eqref{eq:identities}, we get}
B = & \text{N}(x)+2\alpha+2\frac{\gamma}{\alpha}+\text{Tr}\Bigl(\frac{\beta d}{n\alpha}\Bigr)+\text{N}\Bigl(\frac{d}{n}\Bigr).
\end{align}
If we manage to rewrite this as a sum of terms that are all non-negative, then
this bounds the individual terms from above by~$B$.

Note that we recognize the final two terms as terms in the expansion
$$
\text{N}\Bigl(\frac{\beta}{\alpha}+\frac{d^{\vee}}{n}\Bigr)=\frac{\text{N}(\beta)}
{\alpha^2}+\text{Tr}\Bigl(\frac{\beta d}{\alpha n}\Bigr)+\text{N}\Bigl(\frac{d}{n}\Bigr), \text{\; so we get\; } B=\text{N}(x)+2\alpha+2\frac{\gamma}{\alpha}-\frac{\text{N}(\beta)}
{\alpha^2}+\text{N}\Bigl(\frac{\beta}{\alpha}+\frac{d^{\vee}}{n}\Bigr).
$$
Next, by the definition of $n$ in Lemma~\ref{lem:pol_matrix}, we have $n = \alpha\gamma-\text{N}(\beta)$, so $n/\alpha^2 = \gamma/\alpha - \text{N}(\beta)/\alpha^2$,
which again allows us to replace two terms, and get
\begin{equation}\label{eq:trace}
B =
\text{N}(x)+2\alpha+\frac{\gamma}{\alpha}+\frac{n}
{\alpha^2}+\text{N}\Bigl(\frac{\beta}{\alpha}+\frac{d^{\vee}}{n}\Bigr),
\end{equation}
in which finally all terms are non-negative,
as the norm of an element of $\mathcal{B}_{p,\infty}$
is non-negative. We immediately
get that each of the individual terms is at most $B$.
So e.g.,
$\N(x)\leq B,\; 2\alpha \leq B,\; \gamma/\alpha \leq B.$
Hence we obtain
\begin{equation}
\label{eq:boundbeta}
\text{N}(\beta)/\alpha^2 = \frac{\alpha\gamma-n}{\alpha^2} \leq \gamma/\alpha \leq B.
\end{equation}

In order to bound $\text{N}(d)$, we use
the following well-known (in)equalities.
\begin{lemma}[Parallelogram law]
For all $e,f\in \mathcal{B}$, we have $\text{N}(e+f)+\text{N}(e-f) = 2(\text{N}(e)+\text{N}(f)).$
\end{lemma}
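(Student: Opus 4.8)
The plan is to deduce the identity purely formally from the two defining facts about the reduced norm on the definite quaternion algebra $\mathcal{B} = B_{p,\infty}$: that $\N(x) = xx^\vee$ for every $x \in \mathcal{B}$, and that quaternion conjugation $\cdot^\vee$ is additive (it is an additive anti-automorphism of $\mathcal{B}$, so $(e+f)^\vee = e^\vee + f^\vee$ and $(-f)^\vee = -f^\vee$).

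First I would expand, using additivity of $\cdot^\vee$,
\[
\N(e+f) = (e+f)(e^\vee + f^\vee) = ee^\vee + ff^\vee + \bigl(ef^\vee + fe^\vee\bigr) = \N(e) + \N(f) + \bigl(ef^\vee + fe^\vee\bigr).
\]
Carrying out the same computation with $f$ replaced by $-f$, and using $(-f)^\vee = -f^\vee$, yields
\[
\N(e-f) = \N(e) + \N(f) - \bigl(ef^\vee + fe^\vee\bigr).
\]
Adding the two displays, the cross terms $ef^\vee + fe^\vee$ — which incidentally equal $\Tr(ef^\vee)\in\QQ$, though we shall not need this — cancel, and we are left with $\N(e+f) + \N(e-f) = 2(\N(e) + \N(f))$.

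The same argument applies verbatim in the case (not the one relevant to this section, where $\mathcal{B} = B_{p,\infty}$) in which $\mathcal{B}$ is instead a field of degree $\leq 2$ over $\QQ$: there $\N$ is again the quadratic form $x \mapsto xx^\vee$, with $\cdot^\vee$ the nontrivial automorphism of $\mathcal{B}$ (or the identity when $\mathcal{B} = \QQ$), and the computation is unchanged.

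There is no substantial obstacle here: the statement is just the standard fact that a quadratic form determines its associated symmetric bilinear form $\langle e, f \rangle := \N(e+f) - \N(e) - \N(f)$, here equal to $\Tr(ef^\vee)$. The only point that needs (minimal) attention is that the cross terms appearing in the expansions of $\N(e+f)$ and $\N(e-f)$ are exact negatives of each other, which is immediate from the additivity of conjugation.
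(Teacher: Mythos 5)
Your proof is correct and is essentially the paper's own argument: the paper simply says the identity follows from writing out the norms and noting that the cross terms cancel, which is exactly your expansion $\N(e\pm f) = \N(e) + \N(f) \pm (ef^\vee + fe^\vee)$ followed by addition.
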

\begin{proof}
By writing it out, the cross terms cancel on the left-hand side and do
not appear on the right.
\end{proof}
\begin{corollary}\label{cor:normdiff}
For all $f, g\in \mathcal{B}$, we have $\text{N}(g) \leq 2(\text{N}(g+f)+\text{N}(f)).$
\end{corollary}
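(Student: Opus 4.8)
The plan is to derive this immediately from the polarization identity stated just above, together with the positive‑definiteness of the reduced norm on $\mathcal{B} = B_{p,\infty}$. First I would rewrite $g$ as a difference: since $g = (g+f)-f$, applying the polarization identity with the two arguments $e = g+f$ and $f$ gives
\[
\text{N}\bigl((g+f)+f\bigr) + \text{N}\bigl((g+f)-f\bigr) = 2\bigl(\text{N}(g+f)+\text{N}(f)\bigr),
\]
that is, $\text{N}(g+2f) + \text{N}(g) = 2(\text{N}(g+f)+\text{N}(f))$.

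Second, I would simply discard the nonnegative term $\text{N}(g+2f)$: because $\mathcal{B} = B_{p,\infty}$ is ramified at $\infty$, hence a definite quaternion algebra, the reduced norm of every element is $\geq 0$. Dropping $\text{N}(g+2f)\geq 0$ from the left-hand side yields exactly $\text{N}(g) \leq 2(\text{N}(g+f)+\text{N}(f))$, as claimed.

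There is essentially no obstacle here; the only point requiring care is that the inequality genuinely relies on definiteness of $\mathcal{B}$ (it would fail for an indefinite quadratic form), which is available to us since in the relevant case $E$ is supersingular and $\mathcal{B} = B_{p,\infty}$. An equivalent route, if one prefers: first record the general bound $\text{N}(a+b) \leq 2(\text{N}(a)+\text{N}(b))$, itself the polarization identity with the nonnegative term $\text{N}(a-b)$ dropped, and then apply it with $a = g+f$ and $b = -f$, using $\text{N}(-f)=\text{N}(f)$.
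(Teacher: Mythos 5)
Your proof is correct and is essentially the paper's own argument: both apply the polarization identity with $e=g+f$ and discard a nonnegative norm term, using that the reduced norm on the definite quaternion algebra $B_{p,\infty}$ is nonnegative (the paper just drops $\text{N}(e+f)$ before substituting, while you substitute first and drop $\text{N}(g+2f)$). No gaps; your closing remark about definiteness matches the paper's earlier observation that $\text{N}\geq 0$ on $\mathcal{B}$.
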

\begin{proof}
From the lemma, we have $\text{N}(e-f) \leq 2(\text{N}(e)+\text{N}(f))$, which we apply
to $e = g+f$.
\end{proof}
Corollary~\ref{cor:normdiff}, with \eqref{eq:trace} and~\eqref{eq:boundbeta},
now gives
$$
 \N(d^\vee/n)\ \  \leq\ \  2\N(\beta)/\alpha^2 + 2\N(\beta/\alpha + d^\vee/n)
 \ \ \leq\ \  2 (\gamma/\alpha + \N(\beta/\alpha+d^\vee/n))\ \ \leq\ \  2B.
$$
As we also have 
\begin{equation}
\label{eq:n}
n\leq \alpha\gamma\leq \alpha^2 B \leq \frac{1}{4}B^3,
\end{equation}
this gives
$\text{N}(d^\vee) = n^2 \text{N}(d^\vee/n) \leq \frac{1}{8} B^7.$

\newtheorem*{propfield}{Proposition~\ref{prop:field}}

Recall that our goal is to prove the following:
\begin{propfield} 
If $p> \mainbound$, then 
the image $\iota(\mathcal{O})$ is
inside
the ring of $3\times 3$ matrices over a field $\Bone\subset \mathcal{B}$
of degree $\leq 2$ over $\QQ$.
\end{propfield}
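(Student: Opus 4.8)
The plan is to show, under the hypothesis $p>\mainbound$, that the three ``quaternion'' entries $x$, $\beta$ and $d$ of the matrix $\iota(\mu)$ from Lemma~\ref{lem:matrix} pairwise commute in $\mathcal{B}=B_{p,\infty}$ (using Lemma~\ref{lem:quaternionfield}), then to deduce from the Rosati identities~\eqref{eq:identities} that $c$ lies in the $\QQ$-span of $1$ and $\beta d$ and hence also commutes with $x$, $\beta$ and $d$, and finally to conclude that the commutative $\QQ$-subalgebra $\Bone:=\QQ[x,\beta,c,d]\subseteq\mathcal{B}$ is a field of degree $\leq 2$ over $\QQ$ containing all entries of $\iota(\mu)$, so that $\iota(\mathcal{O})\subseteq\iota(K)=\QQ[\iota(\mu)]\subseteq\Mat{3}(\Bone)$. (Recall that when $\mathcal{B}$ is already a field we simply take $\Bone=\mathcal{B}$.)

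First I would collect the reduced-norm bounds already established. From the expansion~\eqref{eq:trace} we have $\N(x)\leq B$, $\alpha\leq B/2$ and $\gamma/\alpha\leq B$; then~\eqref{eq:boundbeta} gives $\N(\beta)\leq\alpha^2B\leq B^3/4$; and the displayed computation just before the restatement gives $\N(d)=\N(d^\vee)\leq\frac18 B^7$. Multiplying these out, $\N(x)\N(\beta)\leq B^4/4$, $\N(x)\N(d)\leq B^8/8$ and $\N(\beta)\N(d)\leq B^{10}/32$, and since $B\geq 2$ (Lemma~\ref{lem:Bnot1}) each of the first two products is also $\leq B^{10}/32$. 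As $B^{10}/32<p/4$ is precisely the hypothesis $p>\mainbound$, Lemma~\ref{lem:quaternionfield} applies to each of the pairs $(x,\beta)$, $(x,d)$ and $(\beta,d)$, so $x$, $\beta$ and $d$ pairwise commute in $\mathcal{R}\subset\mathcal{B}$.

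Next, the fourth identity in~\eqref{eq:identities}, $\gamma=-\alpha c/n-\beta d/n$, rearranges (using $\alpha\in\ZZ_{>0}$ and $n\gamma\in\ZZ$) to $\alpha c=-n\gamma-\beta d$, so $c\in\QQ\cdot 1+\QQ\cdot\beta d$. Since $x$, $\beta$ and $d$ pairwise commute, each of them commutes with the product $\beta d$, hence with $c$; as $a=-\alpha$ is rational, it follows that $\Bone:=\QQ[x,\beta,c,d]$ is commutative. A finite-dimensional commutative $\QQ$-subalgebra of the quaternion division algebra $\mathcal{B}$ has no zero divisors and is therefore a field $L$, and from $\dim_\QQ\mathcal{B}=(\dim_L\mathcal{B})(\dim_\QQ L)$ with $\dim_L\mathcal{B}\geq 2$ (as $\mathcal{B}$ is noncommutative) we get $[L:\QQ]\leq 2$. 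Finally every entry of $\iota(\mu)$ — namely $x$, $a=-\alpha$, $b=-\beta$, $1$, $0$, $c/n$ and $d/n$, recalling $n\in\ZZ$ — lies in $\Bone$, so $\iota(\mu)\in\Mat{3}(\Bone)$; since $K=\QQ(\mu)$ we get $\iota(\mathcal{O})\subseteq\iota(K)=\QQ[\iota(\mu)]\subseteq\Mat{3}(\Bone)$, completing the argument.

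I expect the main obstacle to be the element $c$. Unlike $x$, $\beta$ and $d$, it is not \emph{a priori} of small reduced norm — the Rosati relations only control the combination $\alpha c+\beta d=-n\gamma$ — so Lemma~\ref{lem:quaternionfield} cannot be applied directly to any pair involving $c$. The key is to use that same relation not as a norm estimate but as a \emph{linear} expression of $c$ over $\QQ$ in $1$ and $\beta d$, which makes commutativity of $c$ with the others automatic. A secondary point requiring care is the bookkeeping matching the norm bounds to the exponent~$10$: the binding inequality is $\N(\beta)\N(d)<p/4$, and it is exactly this, via $\N(\beta)\leq B^3/4$ and $\N(d)\leq\frac18 B^7$, that pins down the threshold $\mainbound$ in the statement.
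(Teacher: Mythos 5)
Your proposal is correct and follows essentially the same route as the paper: it invokes the norm bounds $\N(x)\leq B$, $\N(\beta)\leq \tfrac14 B^3$, $\N(d)\leq \tfrac18 B^7$ together with Lemma~\ref{lem:quaternionfield} to make $x,\beta,d$ commute, and uses the relation $\alpha c=-n\gamma-\beta d$ from \eqref{eq:identities} to handle $c$, exactly as in the paper's proof. The only difference is that you spell out the product-norm bookkeeping and the standard fact that a commutative subalgebra of $B_{p,\infty}$ is a field of degree at most $2$, which the paper leaves implicit.
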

\begin{proof}
Suppose $p> \mainbound$.
As $\mu$ generates~$K$, it suffices to show that the entries
$\{x, a, b, c/n, d/n\}$ of $\iota(\mu)$
are in a field~$\mathcal{B}_1$.
Recall that \eqref{eq:identities} gives
$-a=\alpha, \gamma, n\in\ZZ_{>0}$, $b=-\beta$ and
$c = -\frac{n\gamma}{\alpha} - \frac{\beta d}{\alpha}$.
In particular, it suffices to prove that the elements of
$\{x, \beta, d\}$ lie in a field $\mathcal{B}_1$,
for which it suffices to prove that they commute.
We have $\text{N}(x)\leq B$, $\text{N}(\beta)\leq \frac{1}{4} B^3$, $\text{N}(d)\leq \frac{1}{8}B^7$ and $B\geq 2$ (Lemma~\ref{lem:Bnot1}),
hence the product of any pair of distinct elements
of $\{x,\beta,d\}$
has norm less than $p/4$. Therefore, by Lemma \ref{lem:quaternionfield}, every pair
of elements commutes. 
\end{proof}

If $p > \mainbound$, then $\iota(\mu)$ is a matrix
over~$\Bone$.
Let $f$ be the minimal polynomial of $\mu$ over~$\QQ$,
which has degree~$6$.
Then $f(\iota(\mu))=0$, hence $f$ is divisible
by the (at most cubic) minimal polynomial of $\iota(\mu)$ over
the (at most quadratic) field~$\Bone$.
Therefore,
the field $K = \QQ(\mu)$ contains a subfield
isomorphic to $\Bone$ and $\Bone$ is quadratic.
We now identify $\Bone$ with this subfield through a choice of embedding.

This finishes the proof of Theorem~\ref{thm:main}
in the case where $K$ has no imaginary
quadratic subfield.

\section{If the CM field contains an imaginary quadratic subfield}
\label{sec:primitive}

In this section, we finish the proof of
Theorem~\ref{thm:main}.
By the argument at the end of the previous section,
we are left with the case where $\iota(\mu)$ has entries
in an imaginary quadratic subfield $\Bone$ of~$\mathcal{B}$.
We have identified $\Bone$ with the subfield $K_1\subset K$ through a choice of embedding.

Let $p > \mainbound$ be a prime where $B$ is as in Section~\ref{sec:coeffbound}.
Recall that we have a curve $C$ over a number field~$M$ and
a prime $\mathfrak{p}\mid p$ of~$M$
such that $J = \Jacof{C}$ has good reduction at~$\mathfrak{p}$,
but $C$ does not have potential good reduction
at~$\mathfrak{p}$.
By extending $M$ if necessary, assume without loss of generality that $M$ contains the images of all
embeddings
$K\hookrightarrow \overline{M}$.

Recall that the CM type is primitive,
hence is not induced by a CM type of~$\Bone=K_1\subset K$. This means that the CM type induces two distinct embeddings of $\Bone$ into $M$. This primitivity will play a crucial role in our proof of Theorem~\ref{thm:main}. We will need to be able to distinguish between the two embeddings in characteristic~$p$,
for which we will use an element $\sqrt{-\delta}\in\mathcal{O}$
with $\delta\in\ZZ_{>0}$ and $p\nmid 2\delta$.
Such an element automatically exists if $p\nmid 2\Delta(\mathcal{O})$,
which is a relatively weak condition to have in a result
like Theorem~\ref{thm:main}.
However, we do not even need to add such a condition
to the theorem because of the following lemma.

\begin{lemma}
\label{lem:distinctmodp}
Let $B = -\frac{1}{2}\Tr_{K/\mathbb{Q}}(\mu^2)$ and suppose that $p>\frac{1}{4} B^{7.5}$. Then there exists a $\delta\in\mathbb{Z}_{>0}$ coprime to $p$ such that $\sqrt{-\delta}\in\mathcal{O}\cap \Bone$.
\end{lemma}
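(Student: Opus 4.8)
The goal is to produce a rational integer $\delta > 0$, coprime to $p$, with $\sqrt{-\delta}\in\mathcal{O}\cap\Bone$. Since $\Bone = K_1$ is an imaginary quadratic field sitting inside $K$ (via the chosen embedding), $\mathcal{O}\cap\Bone$ is an order in $K_1$; call it $\mathcal{O}_1$. Write $\mathcal{O}_1 = \ZZ + \ZZ\omega$ for a suitable generator $\omega$, or more concretely observe that $\mathcal{O}_1$ contains elements of the form $\sqrt{-\delta}$ for various positive integers $\delta$ — for instance, if $d_1 = |\Delta(K_1)|$ is the absolute value of the discriminant of the maximal order, then $\sqrt{-d_1}$ (up to a factor of $2$) lies in $\mathcal{O}_K\cap K_1$, and some $\ZZ$-multiple of it, or of $\omega - \tfrac12\Tr(\omega)$, lies in $\mathcal{O}\cap K_1$. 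The point is: the set
\[
S = \{\delta\in\ZZ_{>0} : \sqrt{-\delta}\in\mathcal{O}\cap\Bone\}
\]
is nonempty, and moreover if $\delta_0\in S$ then $k^2\delta_0\in S$ for every $k\in\ZZ_{>0}$. So we have an arithmetic progression of squares' worth of candidates and we just need one coprime to $p$.

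\textbf{Key steps.} First I would fix one element $\sqrt{-\delta_0}\in\mathcal{O}\cap\Bone$ with $\delta_0>0$, and bound $\delta_0$ in terms of $B$. This is where Proposition~\ref{prop:geomnumbers}(2) (or rather the geometry of numbers estimate underlying it, applied to $\mathcal{O}_1$) is the natural tool: one can choose $\mu\in\mathcal{O}$ and hence control $\Delta(\mathcal{O}_1)$, or directly produce a small totally imaginary element of $\mathcal{O}\cap\Bone$ whose square is $-\delta_0$ with $\delta_0$ polynomially bounded in $B$ — concretely one expects a bound like $\delta_0 \le C\cdot B^{?}$ coming from the same Minkowski-type argument that gives Lemma~\ref{lem:Bnot1} and Proposition~\ref{prop:geomnumbers}. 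Second, if $p\nmid \delta_0$ we are done with $\delta = \delta_0$. Otherwise $p\mid\delta_0$; then I replace $\delta_0$ by $\delta_0' = \delta_0/p^{2\lceil v_p(\delta_0)/2\rceil}$ if the valuation is even, but in general $p\mid\delta_0$ forces us instead to multiply: consider $\delta = 4\delta_0$ — no, that does not help with $p$ odd. The honest move: since $p > \tfrac14 B^{7.5}$ is large compared to the bound on $\delta_0$, and $\delta_0\ge 1$, having $p\mid\delta_0$ would force $\delta_0\ge p > \tfrac14 B^{7.5}$, contradicting the polynomial bound on $\delta_0$ provided the exponent in that bound is strictly less than $7.5$. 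So the real content is: the geometry-of-numbers bound on the smallest admissible $\delta_0$ has exponent $< 7.5$ in $B$, and then the hypothesis $p > \tfrac14 B^{7.5}$ simply rules out $p\mid\delta_0$ outright, giving $\delta = \delta_0$.

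\textbf{Main obstacle.} The crux is getting a clean bound on $\min S$ of the shape $\min S \le (\text{something})\cdot B^{e}$ with $e < 7.5$, using only data already available (the element $\mu$, the field $K_1$, the order $\mathcal{O}$). I would derive this from Proposition~\ref{prop:geomnumbers}(2): in the imaginary-quadratic-subfield case it gives $B \le |\Delta(\mathcal{O}_1)|\bigl(1 + 2\sqrt{|\Delta(\mathcal{O}_+)|}\bigr)$, which in particular bounds $|\Delta(\mathcal{O}_1)|$ — and hence the smallest $\delta$ with $\sqrt{-\delta}\in\mathcal{O}_1$ — from above by something like $B$ or $B^{3/2}$ (not $B^{7.5}$). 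Then $p \le \delta_0 \le \text{(poly in }B\text{ of degree} < 7.5)$ contradicts $p > \tfrac14 B^{7.5}$ whenever $B\ge 2$, so in fact $p\nmid\delta_0$ and we take $\delta=\delta_0$. The one thing to be careful about is matching the order $\mathcal{O}_1$ used in Proposition~\ref{prop:geomnumbers} with $\mathcal{O}\cap\Bone$ under the identification fixed at the end of Section~\ref{sec:coeffbound}; I would note they coincide (or that one contains the other with index bounded by a power of the primes dividing $\Delta(\mathcal{O})$, which only helps). So the proof is short: exhibit $\sqrt{-\delta_0}\in\mathcal{O}\cap\Bone$ with $\delta_0 = O(B^{3/2})$ say, observe $\delta_0 < p$ by hypothesis, conclude $p\nmid\delta_0$, set $\delta=\delta_0$.
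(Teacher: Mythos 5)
There is a genuine gap, and it sits exactly where you located the ``main obstacle'': you never obtain, and cannot obtain from the cited tools, a bound $\delta_0\le C\,B^{e}$ with $e<7.5$ for the smallest admissible $\delta_0$. Your proposed source, Proposition~\ref{prop:geomnumbers}(2), goes in the wrong direction: it asserts the \emph{existence} of some $\mu$ with $B\le |\Delta(\mathcal{O}_1)|\bigl(1+2\sqrt{|\Delta(\mathcal{O}_+)|}\bigr)$, i.e.\ an upper bound on $B$ in terms of the discriminants (and only for that specially chosen $\mu$, whereas in Lemma~\ref{lem:distinctmodp} the element $\mu$ is the fixed one of Sections~3--5). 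At best this gives a \emph{lower} bound on $|\Delta(\mathcal{O}_1)|$, never an upper bound. Worse, the bound you want is false in the generality needed: the purely imaginary elements of the order $\mathcal{O}\cap\Bone$ are the $\ZZ$-multiples of $\sqrt{\Delta(\mathcal{O}\cap\Bone)}$ (or of half of it), so the minimal admissible $\delta_0$ is essentially $|\Delta(\mathcal{O}\cap\Bone)|=[\mathcal{O}_{\Bone}:\mathcal{O}\cap\Bone]^2\,|\Delta(\mathcal{O}_{\Bone})|$. Since $\mathcal{O}$ is an arbitrary order, the only control available on the index comes from $[\mathcal{O}_K:\mathcal{O}]$ via $|\Delta(\ZZ[\mu])|=O(B^{15})$, so $\delta_0$ can a priori be as large as roughly $B^{20}$, far beyond $p>\tfrac14 B^{7.5}$ (or even $\tfrac18 B^{10}$). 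Hence the step ``$\delta_0<p$, so $p\nmid\delta_0$'' is unavailable; and your fallback of dividing out a power of $p$ also fails, because $\sqrt{-\delta_0}/p$ lies in the maximal order $\mathcal{O}_{\Bone}$ but need not lie in the possibly non-maximal order $\mathcal{O}\cap\Bone$.

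The paper's proof avoids this entirely: it takes $\delta=-\Delta(\mathcal{O}\cap\Bone)$ and proves coprimality to $p$ \emph{without} proving $\delta<p$, by factoring $\delta=[\mathcal{O}_{\Bone}:\mathcal{O}\cap\Bone]^2\,|\Delta(\mathcal{O}_{\Bone})|$ and showing that each factor is a positive integer smaller than $p$, hence not divisible by $p$. Concretely, writing the images of $\mu$ as $\pm ai,\pm bi,\pm ci$ with $B=a^2+b^2+c^2$, one computes $|\Delta(\ZZ[\mu])|=2^6a^2b^2c^2(a^2-b^2)^4(a^2-c^2)^4(b^2-c^2)^4<0.019\,B^{15}$ by AM--GM; then the injection $\mathcal{O}_{\Bone}/(\mathcal{O}\cap\Bone)\hookrightarrow\mathcal{O}_K/\mathcal{O}$ gives $[\mathcal{O}_{\Bone}:\mathcal{O}\cap\Bone]<0.14\,B^{7.5}<p$, and the tower formula $|\Delta(\mathcal{O}_{\Bone})|^3\le|\Delta(\mathcal{O}_K)|$ gives $|\Delta(\mathcal{O}_{\Bone})|<0.27\,B^{5}<p$. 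If you want to salvage your write-up, replace the ``small $\delta_0$'' argument by this factor-by-factor coprimality argument; the rest of your framing (that $\mathcal{O}\cap\Bone$ is an imaginary quadratic order containing some $\sqrt{-\delta}$) is fine.
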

\begin{proof}
We will prove the lemma with $\delta=-\Delta(\mathcal{O}\cap \Bone)$. Then $\sqrt{-\delta}\in \mathcal{O}\cap \Bone$ and $\delta\in \mathbb{Z}_{>0}$ since $\Bone$ is imaginary quadratic. We must show that $\delta$ is coprime to $p$. Note that 
$\Delta(\mathcal{O}\cap \Bone)=[\mathcal{O}_{\Bone}:\mathcal{O}\cap \Bone]^2\Delta(\mathcal{O}_{\Bone})$.
So it will suffice to prove that both $[\mathcal{O}_{\Bone}:\mathcal{O}\cap \Bone]$ and $\Delta(\mathcal{O}_{\mathcal{B}_1})$ are coprime to~$p$,
which we do by showing that they are smaller than $\frac{1}{4} B^{7.5}$ in absolute value.

Let $a\geq b\geq c\geq 0$ be such
that the images of $\mu$ for the embeddings
$K\rightarrow \CC$ are $\{\pm ai, \pm bi,\pm ci\}$,
so $B = a^2+b^2+c^2$. We have 
\begin{align*}
[\mathcal{O}:\mathbb{Z}[\mu]]^2[\mathcal{O}_K:\mathcal{O}]^2|\Delta(\mathcal{O}_{K})|
 &=|\Delta(\mathbb{Z}[\mu])|=(2a)^2(2b)^2(2c)^2(a-b)^4(a+b)^4(a-c)^4(a+c)^4(b-c)^4(b+c)^4  \\
&= 2^6 a^2b^2c^2(a^2-b^2)^4(a^2-c^2)^4(b^2-c^2)^4,
\end{align*}
which, by the inequality of arithmetic and geometric
means, is less than or equal to
\begin{equation}
 2^6 \left(\frac{a^2+b^2+c^2}{3}\right)^3 
\left(\frac{a^2-b^2+a^2-c^2+b^2-c^2}{3}\right)^{12} \leq 2^{6+12} 3^{-(3+12)} B^{15} < 0.019 B^{15}.\label{eq:disc2}
\end{equation}
Since $\frac{\mathcal{O}_{\Bone}}{\mathcal{O}\cap \Bone} \hookrightarrow \frac{\mathcal{O}_K}{\mathcal{O}}$, by \eqref{eq:disc2} we get 
$[\mathcal{O}_{\Bone}:\mathcal{O}\cap \Bone]^2\leq [\mathcal{O}_K:\mathcal{O}]^2<0.019B^{15}$
which gives $[\mathcal{O}_{\Bone}:\mathcal{O}\cap \Bone]<0.14B^{7.5}$, as desired.
Now for $\Delta(\mathcal{O}_{\mathcal{B}_1})$, we use the tower law for discriminants and \eqref{eq:disc2} to get
$$|\Delta(\mathcal{O}_{\mathcal{B}_1})|^3\leq |\Delta(\mathcal{O}_{\mathcal{B}_1})^3 N_{\Bone/\mathbb{Q}}(\Delta_{K/\Bone})|=|\Delta(\mathcal{O}_K)|< 0.019B^{15}. $$
Hence $|\Delta(\mathcal{O}_{\mathcal{B}_1})|<0.27B^5<\frac{1}{4}B^{7.5}$ (by Lemma~\ref{lem:Bnot1}) and our proof is complete.
\end{proof}

\subsection{Some facts about tangent spaces} In order to detect the CM type (and its all-important primitivity), we use the tangent space to $J=\Jacof{C}$ at the identity. For our discussion, we collect some necessary notions about tangent spaces.
We use the definition of tangent space as given
by Demazure in Expos\'e~II of SGA~3~\cite{SGA3ExposeII} in the special case of a scheme over an affine base scheme. This requires the use of the ring of dual numbers. 
\begin{definition}
 For any commutative ring $R$, let $R[\epsilon]$ denote the $R$-algebra of dual numbers over $R$. It is free with basis $1, \epsilon$ as an $R$-module and the $R$-algebra structure comes from setting $\epsilon^2=0.$  
\end{definition}
The natural inclusion $R\hookrightarrow R[\epsilon]$ induces the structure morphism $\rho:\Spec(R[\epsilon])\rightarrow \Spec(R)$. The natural map $R[\epsilon]\rightarrow R$ which sends $\epsilon \mapsto 0$ induces a section $\sigma:\Spec(R)\rightarrow \Spec(R[\epsilon])$, called the zero section.

Let $X\rightarrow S=\Spec(R)$ be a morphism of schemes and let $u\in X(S)=\Hom_S(S,X)$.  
In \cite{SGA3ExposeII}, Demazure defines a commutative $S$-group scheme called the tangent space of $X/S$ at $u$. We will denote the tangent space of $X/S$ at $u$ by $T^u_{X/S}$. For
a commutative $R$-algebra $R'$,
let $t:\Spec(R')\rightarrow\Spec(R)$
denote the structure morphism.
The set $T^u_{X/S}(R')$ is
defined to be
the collection of $S$-morphisms $\theta:\Spec(R'[\epsilon])\rightarrow X$
making the following diagram commute:
$$
\xymatrix{\Spec(R'[\epsilon])\ar[r]^{\theta}& X\\
\Spec(R')\ar[r]_t \ar[u]_{\sigma}& \Spec(R)\ar[u]^u}
$$

We now gather some general facts about tangent spaces that we will need in our discussion.

\begin{proposition}
\label{prop:module}
The set $T^u_{X/S}(R')$ has a canonical $R'$-module structure. The zero element is the map $u\circ t\circ \rho$ where $\rho: \Spec(R'[\epsilon])\to \Spec(R')$ denotes the structure morphism.
\end{proposition}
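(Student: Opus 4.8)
The statement to prove is Proposition~\ref{prop:module}: the set $T^u_{X/S}(R')$ carries a canonical $R'$-module structure, with zero element $u\circ t\circ\rho$.

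\medskip

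The plan is to reduce everything to the standard construction of the tangent space in terms of dual numbers and then verify the module axioms by exhibiting the structure maps explicitly on the level of $R'[\epsilon]$-points. First I would recall that an element $\theta\in T^u_{X/S}(R')$ is, by definition, an $S$-morphism $\theta\colon\Spec(R'[\epsilon])\to X$ whose restriction along $\sigma$ is the fixed point $u\circ t\colon\Spec(R')\to X$; since $X$ is a scheme (not just affine), one works Zariski-locally, but the assignment $R'\mapsto R'[\epsilon]$ is functorial and compatible with localization, so this causes no trouble. The first substantive step is to construct the addition: given $\theta_1,\theta_2\in T^u_{X/S}(R')$, one uses the $R'$-algebra homomorphism
\[
R'[\epsilon]\longrightarrow R'[\epsilon_1,\epsilon_2]/(\epsilon_1^2,\epsilon_2^2,\epsilon_1\epsilon_2),\qquad \epsilon\mapsto \epsilon_1+\epsilon_2,
\]
together with the two projections $\epsilon_i\mapsto\epsilon$; more cleanly, one observes that $\Spec$ of the middle ring is the fibered product $\Spec(R'[\epsilon])\times_{\Spec(R')}\Spec(R'[\epsilon])$ over the zero section, and since $\theta_1,\theta_2$ agree along $\sigma$ they glue to a morphism out of this fibered product, which one then precomposes with the diagonal-type map induced by $\epsilon\mapsto\epsilon_1+\epsilon_2$. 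This is the one place where the hypothesis that the restrictions of the $\theta_i$ along $\sigma$ coincide (namely with $u\circ t$) is essential, and it is the step I expect to require the most care to phrase correctly scheme-theoretically.

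\medskip

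Next I would define the scalar action: for $a\in R'$, multiplication by $a$ on $T^u_{X/S}(R')$ is precomposition of $\theta$ with the $R'$-algebra endomorphism $m_a\colon R'[\epsilon]\to R'[\epsilon]$ sending $\epsilon\mapsto a\epsilon$; note $m_a$ fixes $R'$ and satisfies $m_a\circ\sigma^\sharp=\sigma^\sharp$, so $a\cdot\theta$ again restricts to $u\circ t$ along $\sigma$ and is a well-defined element of $T^u_{X/S}(R')$. The zero element is the constant tangent vector, i.e.\ the morphism $\Spec(R'[\epsilon])\to\Spec(R')\xrightarrow{t}\Spec(R)\xrightarrow{u}X$, which is exactly $u\circ t\circ\rho$; this corresponds on rings to the map $\epsilon\mapsto 0$, equivalently $m_0$, so it is both the additive identity (from the first construction, taking $\epsilon\mapsto 0+0$) and the zero scalar multiple. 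One should also record that $-\theta = (-1)\cdot\theta = m_{-1}\circ\theta$ gives additive inverses.

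\medskip

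Finally I would verify the module axioms. Each axiom — associativity and commutativity of addition, the unit law $1\cdot\theta=\theta$, distributivity $(a+b)\cdot\theta = a\cdot\theta + b\cdot\theta$ and $a\cdot(\theta_1+\theta_2)=a\cdot\theta_1+a\cdot\theta_2$, and $(ab)\cdot\theta = a\cdot(b\cdot\theta)$ — becomes, after unwinding the definitions, an identity between $R'$-algebra homomorphisms out of $R'[\epsilon]$ landing in an appropriate $\epsilon$-power-truncated polynomial ring, and each such identity is checked by evaluating on the single generator $\epsilon$ and using $\epsilon_i\epsilon_j=0$. For instance distributivity in the scalar reduces to the equality of the two maps $\epsilon\mapsto(a+b)\epsilon$ and (the composite realizing $a\cdot\theta+b\cdot\theta$) $\epsilon\mapsto a\epsilon_1+b\epsilon_2\mapsto$ well-defined precisely because $\theta$ has constant restriction along $\sigma$. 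All of these are routine once the addition and scalar maps are set up; the functoriality in $R'$ (needed for "canonical") follows because every map used is natural in $R'$. I would remark that this is precisely the content of Demazure's construction in \cite[Exp.~II]{SGA3ExposeII}, so the proof amounts to transcribing that construction in the dual-numbers language fixed above, and the genuine obstacle is purely bookkeeping: making the fibered-product description of addition precise for a general scheme $X$ rather than an affine one.
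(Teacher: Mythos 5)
Your construction is the same as the paper's (which simply transcribes Stacks Project tag 0B2B/Demazure): glue $\theta_1,\theta_2$ over the ring $R'[\epsilon_1,\epsilon_2]$ with $\epsilon_1^2=\epsilon_1\epsilon_2=\epsilon_2^2=0$, compose with a comparison map to get the sum, define scalars by $\epsilon\mapsto\lambda\epsilon$, and check the axioms by diagram/generator chasing; the zero element falls out of the addition law exactly as you say.

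Two points in your ``more cleanly'' reformulation are mis-stated, though, and as written the key composite does not parse. First, $\Spec\bigl(R'[\epsilon_1,\epsilon_2]/(\epsilon_1^2,\epsilon_1\epsilon_2,\epsilon_2^2)\bigr)$ is \emph{not} the fibered product $\Spec(R'[\epsilon])\times_{\Spec(R')}\Spec(R'[\epsilon])$: that fibered product (taken along the structure morphisms) is $\Spec(R'[\epsilon]\otimes_{R'}R'[\epsilon])$, in which $\epsilon_1\epsilon_2\neq 0$. The ring you wrote is the fiber product of \emph{rings} $R'[\epsilon]\times_{R'}R'[\epsilon]$, so its $\Spec$ is the \emph{pushout} $\Spec(R'[\epsilon])\amalg_{\Spec(R')}\Spec(R'[\epsilon])$ along the zero sections, and it is precisely the pushout property (valid in the category of all schemes, which is the only non-affine subtlety) that lets you glue $\theta_1,\theta_2$ to a morphism $\Theta\colon\Spec(R'[\epsilon_1,\epsilon_2])\to X$; a fibered product would give you maps \emph{into} it, not out of it. Second, the comparison map you need is the morphism $\Spec(R'[\epsilon])\to\Spec(R'[\epsilon_1,\epsilon_2])$ corresponding to the ring map $R'[\epsilon_1,\epsilon_2]\to R'[\epsilon]$, $\epsilon_1\mapsto\epsilon$, $\epsilon_2\mapsto\epsilon$ (as in the paper); the ring map $\epsilon\mapsto\epsilon_1+\epsilon_2$ that you invoke induces a scheme morphism in the opposite direction, $\Spec(R'[\epsilon_1,\epsilon_2])\to\Spec(R'[\epsilon])$, which cannot be precomposed with $\Theta$. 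With these two corrections (pushout in place of fibered product, and $\epsilon_i\mapsto\epsilon$ in place of $\epsilon\mapsto\epsilon_1+\epsilon_2$) your argument coincides with the paper's proof, and the remaining verifications are the routine ones you describe.
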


\begin{proof}
This is a slight generalization of  
\href{http://stacks.math.columbia.edu/tag/0B2B}{the lemma with tag 0B2B} in
the Stacks Project
\cite{stacks-project}. The proof is the same; we recall the main ingredients here for the reader's convenience. We have a pushout in the
category of schemes
\[
\Spec(R'[\epsilon]) \amalg_{\Spec(R')} \Spec(R'[\epsilon])
= \Spec(R'[\epsilon_1, \epsilon_2])
\]
where $R'[\epsilon_1, \epsilon_2]$ is the $R'$-algebra with
basis $1, \epsilon_1, \epsilon_2$ and
$\epsilon_1^2 = \epsilon_1\epsilon_2 = \epsilon_2^2 = 0$.
Given two $S$-morphisms
$\theta_1, \theta_2 : \Spec(R'[\epsilon]) \to X$,
we construct an $S$-morphism
\begin{equation} \label{eq: addition}
\theta_1 + \theta_2 \ \ :\ \ 
\Spec(R'[\epsilon]) \xrightarrow{\phantom{\theta_1, \theta_2}}
\Spec(R'[\epsilon_1, \epsilon_2])
\xrightarrow{\theta_1, \theta_2} X
\end{equation}
where the first arrow is given by $\epsilon_i \mapsto \epsilon$.
Now for scalar multiplication, given $\lambda \in R'$ there is a selfmap
of $\Spec(R'[\epsilon])$ corresponding to the $R'$-algebra
endomorphism of $R'[\epsilon]$ which sends $\epsilon$ to $\lambda \epsilon$.
Precomposing $\theta : \Spec(R'[\epsilon]) \to X$
with this selfmap gives $\lambda \cdot \theta$. The axioms of a vector space are verified by exhibiting suitable commutative diagrams of schemes. The statement about the zero element follows immediately from the description of the addition law (\ref{eq: addition}).
\end{proof}

\begin{proposition}
\label{prop:link}
Let $v$ be the composition $v: \Spec(R')\xrightarrow{t} S\xrightarrow{u} X$.
Then there is an isomorphism of $R'$-modules 
$T^u_{X/S}(R')\cong \Hom_{R'}(v^*(\Omega^1_{X/S}), R'),$
where $\Omega^1_{X/S}$ denotes the sheaf of relative differentials of $X/S$.
\end{proposition}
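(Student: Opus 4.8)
The plan is to reduce to the case where $X$ is affine and then identify tangent vectors with derivations. Since any $S$-morphism $\theta\colon\Spec(R'[\epsilon])\to X$ representing an element of $T^u_{X/S}(R')$ induces on underlying topological spaces the same map as $v$ (because $\Spec(R'[\epsilon])$ and $\Spec(R')$ have the same underlying space and $\theta\circ\sigma=v$), the formation of both sides of the claimed isomorphism is compatible with restricting $X$ to the members of an affine open cover, $\Spec(R')$ to the induced open cover, and with Zariski localization on $\Spec(R')$; hence it suffices to treat $X=\Spec(A)$ for an $R$-algebra $A$. Write $\bar v\colon A\to R'$ for the ring homomorphism corresponding to $v$, so $R'$ is an $A$-module via $\bar v$ and $v^*(\Omega^1_{X/S})=\Omega^1_{A/R}\otimes_A R'$ as an $R'$-module.

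In this affine case, an element $\theta\in T^u_{X/S}(R')$ is the same thing as an $R$-algebra homomorphism $\theta^\#\colon A\to R'[\epsilon]$ whose composition with the map $R'[\epsilon]\to R'$, $\epsilon\mapsto 0$, equals $\bar v$ (here $A$ is an $R$-algebra via $u$, and $R'[\epsilon]$ via $R\to R'\hookrightarrow R'[\epsilon]$). Writing $\theta^\#(a)=\bar v(a)+D(a)\epsilon$ defines an additive map $D\colon A\to R'$, and one checks directly that $\theta^\#$ is a homomorphism of $R$-algebras if and only if $D$ is an $R$-derivation of $A$ into $R'$: the Leibniz rule comes from multiplicativity of $\theta^\#$ and the $R$-linearity of $D$ from that of $\theta^\#$. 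This yields a bijection $T^u_{X/S}(R')\cong\mathrm{Der}_R(A,R')$, and the universal property of the module of relative differentials identifies the target with $\Hom_A(\Omega^1_{A/R},R')=\Hom_{R'}(\Omega^1_{A/R}\otimes_A R',R')=\Hom_{R'}(v^*(\Omega^1_{X/S}),R')$.

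Finally one checks that this bijection is $R'$-linear for the module structure of Proposition~\ref{prop:module}. Unwinding the definitions there: scalar multiplication by $\lambda\in R'$ postcomposes $\theta^\#$ with the $R'$-algebra endomorphism $\epsilon\mapsto\lambda\epsilon$ of $R'[\epsilon]$, hence sends $D$ to $\lambda D$; and the addition law, defined through the pushout $\Spec(R'[\epsilon_1,\epsilon_2])$, corresponds under $R'[\epsilon_1,\epsilon_2]\cong R'[\epsilon]\times_{R'}R'[\epsilon]$ and the map $\epsilon_i\mapsto\epsilon$ to $D_1+D_2$. Both operations match the $R'$-module structure of $\Hom_{R'}(v^*(\Omega^1_{X/S}),R')$, and since all identifications used are canonical, the local isomorphisms glue to the asserted global one. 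The mathematical content (tangent vector $=$ derivation $=$ homomorphism out of $\Omega^1$) is standard; the only point requiring care is the bookkeeping in the reduction to the affine case and the verification that the two module structures correspond. Alternatively, one can bypass the explicit computation by noting that, since $\Spec(R')\hookrightarrow\Spec(R'[\epsilon])$ is a square-zero thickening with ideal $R'\epsilon\cong R'$, the set $T^u_{X/S}(R')$ of lifts of $v$ over $S$ is a torsor under $\Hom_{R'}(v^*(\Omega^1_{X/S}),R')$, nonempty because the zero section provides the base point $u\circ t\circ\rho$, which makes the torsor canonically the group itself.
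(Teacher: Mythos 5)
Your proof is correct. Note, though, that the paper does not actually argue this proposition: its ``proof'' is a citation to SGA~3, Expos\'e~II (Remark~3.6.1 and footnote~(25)), so what you have written is a self-contained expansion of exactly the standard content behind that reference. Your route --- reduce to $X=\Spec(A)$ using that $\Spec(R'[\epsilon])$ and $\Spec(R')$ share the same underlying space, identify a lift $\theta^{\#}(a)=\bar v(a)+D(a)\epsilon$ with an $R$-derivation $D\in\mathrm{Der}_R(A,R')$, invoke the universal property $\mathrm{Der}_R(A,R')\cong\Hom_A(\Omega^1_{A/R},R')\cong\Hom_{R'}(\Omega^1_{A/R}\otimes_A R',R')$, and check that scalar multiplication ($\epsilon\mapsto\lambda\epsilon$) and the pushout addition law from Proposition~\ref{prop:module} go over to $\lambda D$ and $D_1+D_2$ --- is the classical dictionary between tangent vectors, derivations and homomorphisms out of $\Omega^1$, and your verification of the module structures is the part that actually ties the statement to the paper's definition of $T^u_{X/S}$. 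The one step deserving a word of care is the globalization: it works because both sides are global sections of Zariski sheaves on $\Spec(R')$ (lifts of morphisms glue, and $\Hom_{R'}(v^*\Omega^1_{X/S},R')$ is $\Gamma$ of the quasi-coherent Hom sheaf, so no finite-presentation hypothesis is needed), and your canonical local identifications are compatible with restriction; you gesture at this adequately. Your closing alternative via the torsor of lifts along the square-zero thickening, trivialized by the base point $u\circ t\circ\rho$, is also valid and arguably slicker, though to fully replace the computation one should still observe that the torsor structure matches the addition law of Proposition~\ref{prop:module}.
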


\begin{proof}
See Remark 3.6.1 and footnote (25) in \cite{SGA3ExposeII}.
\end{proof}

\begin{proposition}
\label{prop:linear}
Let $X$ and $Y$ be schemes over $S$ and let $f:X\to Y$ be an $S$-morphism. Then $f$ induces an $S$-morphism $T(f):T^u_{X/S}\to T^{f\circ u}_{Y/S}$, called the derived morphism, with the following properties:
\begin{enumerate}
\item $T(f\circ g)=T(f)\circ T(g)$;
\item $T(f)$ induces an $R'$-module homomorphism $T^u_{X/S}(R')\to T^{f\circ u}_{Y/S}(R').$
\end{enumerate}
Furthermore, suppose that $G$ is a group scheme over $S$ with identity section $e$ and $n_G : G \to G$ is the $S$-morphism $g\to g^n$ for $n\in\mathbb{Z}$.
Then the derived morphism $T(n_G) : T^e_{G/S}\to T^e_{G/S}$ is multiplication by $n$, meaning it sends $x\in  T^e_{G/S}(R')$ to $nx$.
\end{proposition}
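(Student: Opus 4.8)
The plan is to \emph{define} the derived morphism by post-composition and then obtain everything formally from functoriality together with the explicit description of the module operations in the proof of Proposition~\ref{prop:module}. Concretely, for an $S$-morphism $f\colon X\to Y$ and a commutative $R$-algebra $R'$, set $T(f)(\theta)=f\circ\theta$ for $\theta\in T^u_{X/S}(R')$ viewed as an $S$-morphism $\Spec(R'[\epsilon])\to X$. If $\theta\circ\sigma=u\circ t$ then $(f\circ\theta)\circ\sigma=(f\circ u)\circ t$, so $T(f)(\theta)\in T^{f\circ u}_{Y/S}(R')$, and the construction is visibly natural in $R'$, giving the $S$-morphism $T(f)\colon T^u_{X/S}\to T^{f\circ u}_{Y/S}$. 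Property~(1) is then immediate from associativity of composition. For property~(2), recall that $\theta_1+\theta_2$ is the composite of the map $\Spec(R'[\epsilon])\to\Spec(R'[\epsilon_1,\epsilon_2])$ given by $\epsilon_i\mapsto\epsilon$ with the morphism $\Spec(R'[\epsilon_1,\epsilon_2])\to X$ obtained from the pushout description of $\Spec(R'[\epsilon_1,\epsilon_2])$ and the pair $(\theta_1,\theta_2)$, while $\lambda\cdot\theta$ is $\theta$ precomposed with the self-map of $\Spec(R'[\epsilon])$ induced by $\epsilon\mapsto\lambda\epsilon$. Because the universal property of the pushout is functorial, post-composition with $f$ turns the morphism out of $\Spec(R'[\epsilon_1,\epsilon_2])$ attached to $(\theta_1,\theta_2)$ into the one attached to $(f\circ\theta_1,f\circ\theta_2)$; hence $T(f)(\theta_1+\theta_2)=T(f)(\theta_1)+T(f)(\theta_2)$, and likewise $T(f)(\lambda\cdot\theta)=\lambda\cdot T(f)(\theta)$ since post-composition commutes with precomposition.

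For the group-scheme statement I would first record two formal facts. \textbf{(A)} The tangent functor sends fibre products to products: for basepoints $u_1,u_2$ the map $(T(\pi_1),T(\pi_2))\colon T^{(u_1,u_2)}_{X\times_SY/S}\to T^{u_1}_{X/S}\times T^{u_2}_{Y/S}$ is an isomorphism of group schemes, with inverse sending a pair $(\theta_1,\theta_2)$ to the $S$-morphism $\Spec(R'[\epsilon])\to X\times_SY$ they determine; this is immediate from the defining diagram and the universal property of the fibre product, and by~(2) the isomorphism is $R'$-linear. \textbf{(B)} Writing $m\colon G\times_SG\to G$ for the group law, $e$ for the identity section and $q_G\colon G\to S$ for the structure morphism, the derived morphism $T(m)\colon T^e_{G/S}\times T^e_{G/S}\to T^e_{G/S}$ is the addition map. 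For~(B) I would use the sections $i_1,i_2\colon G\to G\times_SG$ with $i_1=(\mathrm{id}_G,e\circ q_G)$ and $i_2=(e\circ q_G,\mathrm{id}_G)$; then $\pi_1\circ i_1=\mathrm{id}_G$, $\pi_2\circ i_1=e\circ q_G$ (and symmetrically for $i_2$), while $m\circ i_1=m\circ i_2=\mathrm{id}_G$. Since $T^{\mathrm{id}_S}_{S/S}$ is the trivial group scheme, functoriality and~(A) give $T(i_1)(x)=(x,0)$ and $T(i_2)(y)=(0,y)$, so any element of $T^{(e,e)}_{G\times_SG/S}(R')$ equals $T(i_1)(x)+T(i_2)(y)$ for its coordinates $(x,y)$; applying the $R'$-linearity of $T(m)$ from~(2),
\[ T(m)(x,y)=T(m)\bigl(T(i_1)(x)\bigr)+T(m)\bigl(T(i_2)(y)\bigr)=T(m\circ i_1)(x)+T(m\circ i_2)(y)=x+y. \]

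It remains to compute $T(n_G)$. Let $\delta_G\colon G\to G\times_SG$ be the diagonal, so $T(\delta_G)(x)=(x,x)$ by~(A) since $\pi_j\circ\delta_G=\mathrm{id}_G$; and by~(A) and functoriality $T$ of $\mathrm{id}_G\times(n-1)_G$ is $(x,y)\mapsto(x,(n-1)y)$. For $n\geq 1$ one has the identity of $S$-morphisms $n_G=m\circ(\mathrm{id}_G\times(n-1)_G)\circ\delta_G$ (with $0_G=e\circ q_G$), so induction on $n$ together with~(B) gives $T(n_G)(x)=T(m)(x,(n-1)x)=nx$, the base case $T(0_G)=0$ following from the triviality of $T^{\mathrm{id}_S}_{S/S}$. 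For $n<0$ I would first check $T(\mathrm{inv})=-\mathrm{id}$: the identity $m\circ(\mathrm{id}_G,\mathrm{inv})=e\circ q_G$ with~(A), (B) and linearity yields $0=T(m)(x,T(\mathrm{inv})(x))=x+T(\mathrm{inv})(x)$; then $n_G=(-n)_G\circ\mathrm{inv}$ gives $T(n_G)(x)=(-n)\cdot(-x)=nx$. This completes the proof.

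The whole argument is formal, so I do not expect a genuine obstacle. The one place that demands care is bookkeeping: verifying that the $R'$-module structure of Proposition~\ref{prop:module}, built from the pushout $\Spec(R'[\epsilon_1,\epsilon_2])$ and from the self-maps of $\Spec(R'[\epsilon])$, is compatible both with post-composition (needed for~(2)) and with the product decomposition of~(A). This amounts to carefully unwinding the relevant universal properties, and once it is in place, (1), (2) and the identity $T(n_G)=[n]$ all fall out of functoriality.
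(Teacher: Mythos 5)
Your proposal is correct, but it is substantially more self-contained than what the paper does. The paper's proof of this proposition is essentially a citation: it refers to Proposition~3.7~bis and Corollaire~3.9.4 of SGA~3 \cite{SGA3ExposeII} for the existence of $T(f)$ and for the statement that $T(n_G)$ is multiplication by $n$, and only records the explicit description $T(f)(\theta)=f\circ\theta$ together with the remark that this visibly preserves the module structure of Proposition~\ref{prop:module}. Your first half coincides with that explicit description (you just spell out the compatibility with the pushout construction and the scalar self-maps, which the paper leaves as ``clearly''). Where you genuinely diverge is the multiplication-by-$n$ statement: instead of citing SGA~3 you reprove it, by showing that the tangent construction takes $G\times_S G$ to the product (which is in effect the paper's Proposition~\ref{prop:products}, i.e.\ SGA~3, Proposition~3.8), deducing that $T(m)$ is addition via the sections $i_1,i_2$ and the triviality of $T^{\mathrm{id}_S}_{S/S}$, and then running the induction $n_G=m\circ(\mathrm{id}_G\times (n-1)_G)\circ\delta_G$, with $T(\mathrm{inv})=-\mathrm{id}$ handling negative $n$. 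Your bookkeeping (base points $e$, $(e,e)$ preserved by $m$, $\delta_G$, $(n-1)_G$, and the linearity of the identification in your fact (A)) is sound, so the argument goes through. What the paper's route buys is brevity and reliance on a standard reference; what yours buys is a complete, elementary verification within the formalism already set up in Propositions~\ref{prop:module} and~\ref{prop:products}, at the cost of essentially reproducing SGA~3's Corollaire~3.9.4.
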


\begin{proof}
See \cite{SGA3ExposeII}, Proposition 3.7.bis and Corollaire 3.9.4. If $\theta : \Spec(R'[\epsilon])\to X$ is an element of $T^u_{X/S}(R')$, then $T(f)$ sends $\theta$ to $f\circ \theta$. This clearly preserves the $R'$-module structure described in Proposition \ref{prop:module}.
\end{proof}

\begin{proposition}[Proposition 3.8 in \cite{SGA3ExposeII}]
\label{prop:products}
Let $X$ and $Y$ be schemes over $S$. Then 
$T^u_{X/S}\times_S T^w_{Y/S}\cong T^{(u,w)}_{(X\times_S Y)/S}.$
\end{proposition}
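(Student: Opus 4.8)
The plan is to reduce the claim about tangent spaces of a product to the characterization of the tangent space as a dual module of relative differentials, which was established in Proposition~\ref{prop:link}. Concretely, for any commutative $R$-algebra $R'$ with structure morphism $t:\Spec(R')\to S$, I would compute both sides as functors on the category of such $R'$ and exhibit a natural isomorphism.

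First I would unwind the definitions. Let $Z = X\times_S Y$ and let $(u,w)\in Z(S)$ be the section determined by $u\in X(S)$ and $w\in Y(S)$ via the universal property of the fibre product. An element of $T^{(u,w)}_{Z/S}(R')$ is an $S$-morphism $\theta:\Spec(R'[\epsilon])\to Z$ whose restriction along the zero section $\sigma$ equals $(u,w)\circ t$. By the universal property of $\times_S$, giving such a $\theta$ is the same as giving a pair of $S$-morphisms $\theta_X:\Spec(R'[\epsilon])\to X$ and $\theta_Y:\Spec(R'[\epsilon])\to Y$; and the compatibility with $\sigma$ for $\theta$ is equivalent to the two compatibilities $\theta_X\circ\sigma = u\circ t$ and $\theta_Y\circ\sigma = w\circ t$. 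That is exactly the data of an element of $T^u_{X/S}(R')\times T^w_{Y/S}(R')$, which is $\bigl(T^u_{X/S}\times_S T^w_{Y/S}\bigr)(R')$. So we obtain a bijection, functorially in $R'$, and hence an isomorphism of functors; since both sides are representable $S$-group schemes, this is an isomorphism of $S$-schemes (indeed of $S$-group schemes, using Proposition~\ref{prop:module}).

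Alternatively, and perhaps more cleanly, I would invoke Proposition~\ref{prop:link} directly: with $v:\Spec(R')\xrightarrow{t} S\xrightarrow{(u,w)} Z$, we have $v = $ the composite determined by the analogous maps into $X$ and $Y$, and $\Omega^1_{Z/S}\cong \mathrm{pr}_X^*\Omega^1_{X/S}\oplus \mathrm{pr}_Y^*\Omega^1_{Y/S}$ by the standard behaviour of K\"ahler differentials under base change and products. Pulling back along $v$ and applying $\Hom_{R'}(-,R')$ turns the direct sum into a direct sum, giving $T^{(u,w)}_{Z/S}(R')\cong T^u_{X/S}(R')\oplus T^w_{Y/S}(R')$ naturally in $R'$. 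One should check the two descriptions agree, but that is routine.

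I do not expect any serious obstacle here; the statement is essentially formal. The only point requiring a little care is compatibility of all the identifications with the $R'$-module (equivalently, $S$-group-scheme) structures from Proposition~\ref{prop:module} and with the derived morphisms of Proposition~\ref{prop:linear} applied to the two projections $Z\to X$ and $Z\to Y$; in fact the cleanest formulation is that the pair $\bigl(T(\mathrm{pr}_X),T(\mathrm{pr}_Y)\bigr)$ is the claimed isomorphism, and checking this is a diagram chase using that $\theta\mapsto(\mathrm{pr}_X\circ\theta,\mathrm{pr}_Y\circ\theta)$ is a bijection on $R'$-points. Since everything in sight is a natural transformation of representable functors, Yoneda then upgrades the $R'$-point bijections to the desired isomorphism of $S$-schemes.
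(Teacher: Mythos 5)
Your argument is correct. Note, though, that the paper does not prove this proposition at all: its ``proof'' is a citation to SGA~3, Expos\'e~II, Proposition~3.8. What you have written is essentially the standard argument behind that reference: an $S$-morphism $\Spec(R'[\epsilon])\to X\times_S Y$ is, by the universal property of the fibre product, the same as a pair of $S$-morphisms to $X$ and to $Y$, and the compatibility with the zero section splits into the two corresponding compatibilities, giving a bijection $T^{(u,w)}_{(X\times_S Y)/S}(R')\cong T^u_{X/S}(R')\times T^w_{Y/S}(R')$ natural in $R'$; Yoneda (applied at the level of functors, which is all that is needed even before worrying about representability) then yields the isomorphism of $S$-schemes, and the description of the addition and scalar multiplication in Proposition~\ref{prop:module} shows the identification respects the $R'$-module structures, since everything is defined by composition and commutes with the projections. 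Your alternative route through Proposition~\ref{prop:link} and the decomposition $\Omega^1_{(X\times_S Y)/S}\cong \mathrm{pr}_X^*\Omega^1_{X/S}\oplus \mathrm{pr}_Y^*\Omega^1_{Y/S}$ is also fine, though it proves the statement only on $R'$-points unless one again argues functorially, so the first, purely formal argument is the cleaner one and is the one to keep. In short: the content is right, and compared with the paper you have supplied the self-contained verification that the paper delegates to SGA~3.
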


\subsection{The proof of Theorem~\ref{thm:main}} \label{sec:proofmain}
Now we will apply these general facts about tangent spaces to our specific case. We want to relate the tangent space of $J$ at the identity to the tangent space of its reduction modulo $\mathfrak{p}$ at the identity. For this we will use the tangent space at the identity section of a N\'{e}ron model of $J/M$.

Let $\mathcal{O}_{\frp}$ be the valuation ring of $\frp$ and let $\mathfrak{K}=\mathcal{O}_M/\mathfrak{p}$ be the residue field.
Let $\mathcal{J}/\mathcal{O}_{\mathfrak{p}}$ be a N\'{e}ron model for $J/M$ and let $\overline{J}/\mathfrak{K}$ be the
special fibre of $\mathcal{J}$. Let $\tilde{e}:\Spec(\mathcal{O}_{\frp})\rightarrow \mathcal{J}$, $e:\Spec (M)\rightarrow J$ and $e_0: \Spec(\resfield)\rightarrow \overline{J}$ be the identity sections of $\mathcal{J}$, $J$ and $\overline{J}$ respectively. 

\begin{lemma}
\label{lem:tensors}
The $\mathcal{O}_{\mathfrak{p}}$-module $T^{\tilde{e}}_{\mathcal{J}/\mathcal{O}_{\mathfrak{p}}}(\mathcal{O}_{\mathfrak{p}})$ is free of rank~$3$. Furthermore, there are natural isomorphisms
\begin{equation}\label{eq:tensork} 
T^e_{J/M}(M)
\cong T^{\tilde{e}}_{\mathcal{J}/\mathcal{O}_{\mathfrak{p}}}(\mathcal{O}_{\mathfrak{p}})\otimes_{\mathcal{O}_{\mathfrak{p}}}M \qquad \text{and}  \qquad
T^{e_0}_{\overline{J}/\mathfrak{K}}(\mathfrak{K})
\cong T^{\tilde{e}}_{\mathcal{J}/\mathcal{O}_{\mathfrak{p}}}(\mathcal{O}_{\mathfrak{p}})\otimes_{\mathcal{O}_{\mathfrak{p}}}\mathfrak{K}
\end{equation} 
as vector spaces over $M$ and $\mathfrak{K}$ respectively.
Moreover, the isomorphisms \eqref{eq:tensork} respect the
action of $T(f)$ for $f\in\End_{\mathcal{O}_\mathfrak{p}}(\mathcal{J}) = \End_M(J)$.
\end{lemma}

\begin{proof}
By \cite[Proposition 6.2.5]{Liu}, $\Omega^1_{\mathcal{J}/\mathcal{O}_{\mathfrak{p}}}$ is free of rank~$3$ in a neighborhood of the image of $e_0$. Note that any such neighborhood contains the image of $\tilde{e}$. Therefore, $\tilde{e}^*(\Omega^1_{\mathcal{J}/\mathcal{O}_{\mathfrak{p}}})$ is a free $\mathcal{O}_{\mathfrak{p}}$-module of rank~$3$. Now Proposition \ref{prop:link} implies that the same is true of $T^{\tilde{e}}_{\mathcal{J}/\mathcal{O}_{\mathfrak{p}}}(\mathcal{O}_{\mathfrak{p}})$. Likewise, 
$T^e_{J/M}(M)$ and $T^{e_0}_{\overline{J}/\mathfrak{K}}(\mathfrak{K})$ are vector spaces of dimension $3$ over $M$ and $\mathfrak{K}$, respectively.

We have canonical identifications $T^e_{J/M}(M)=T^{\tilde{e}}_{\mathcal{J}/\mathcal{O}_{\mathfrak{p}}}(M)$ and $T^{e_0}_{\overline{J}/\mathfrak{K}}(\mathfrak{K})=T^{\tilde{e}}_{\mathcal{J}/\mathcal{O}_{\mathfrak{p}}}(\mathfrak{K})$. Let $F\in\{M,\mathfrak{K}\}$ and let $t:\Spec(F[\epsilon])\rightarrow \Spec(\mathcal{O}_\mathfrak{p}[\epsilon])$ be the natural map. Then precomposing an element $\theta\in T^{\tilde{e}}_{\mathcal{J}/\mathcal{O}_{\mathfrak{p}}}(\mathcal{O}_\mathfrak{p})$ with $t$ gives an element of $T^{\tilde{e}}_{\mathcal{J}/\mathcal{O}_{\mathfrak{p}}}(F)$. The $\mathcal{O}_\mathfrak{p}$-bilinear map $T^{\tilde{e}}_{\mathcal{J}/\mathcal{O}_{\mathfrak{p}}}(\mathcal{O}_\mathfrak{p})\times F\to T^{\tilde{e}}_{\mathcal{J}/\mathcal{O}_{\mathfrak{p}}}(F)$ given by $(\theta,\lambda)\mapsto \lambda\cdot (\theta\circ t)$ induces a homomorphism of $F$-vector spaces $T^{\tilde{e}}_{\mathcal{J}/\mathcal{O}_{\mathfrak{p}}}(\mathcal{O}_\mathfrak{p})\otimes_{\mathcal{O}_\mathfrak{p}} F\to T^{\tilde{e}}_{\mathcal{J}/\mathcal{O}_{\mathfrak{p}}}(F).$
One can check that this map is injective using the description of the zero element given in Proposition~\ref{prop:module}. Surjectivity follows by comparing dimensions. Finally, the action of $T(f)$, as described in Proposition~\ref{prop:linear}, is clearly preserved.
\end{proof}

The main ingredient for the proof of Theorem~\ref{thm:main} is the following proposition.

\begin{proposition}
\label{prop:2evals}
Let $B$ and $\delta$ be as in Lemma \ref{lem:distinctmodp}, and suppose that $p > \frac{1}{8}B^{10}$. 
Then there is an invertible matrix $P\in \Mat{3}(\Bone)$ such that
\[P\iota(\sqrt{-\delta}) P^{-1}=\pm\begin{pmatrix} \sqrt{-\delta} & 0 & 0\\
0 & \sqrt{-\delta} & 0\\
0 & 0 & -\sqrt{-\delta}\end{pmatrix}.\]
\end{proposition}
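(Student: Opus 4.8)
The plan is to show that $M:=\iota(\sqrt{-\delta})\in\Mat 3(\Bone)$ is diagonalizable with eigenvalues among $\{\sqrt{-\delta},-\sqrt{-\delta}\}$, and that the multiplicity of each is determined by the CM type, which by primitivity is split $2{+}1$ (up to sign). First I would record that $\sqrt{-\delta}$ satisfies $T^2+\delta=0$, so $M^2=-\delta\cdot I$. Hence the minimal polynomial of $M$ over $\Bone$ divides $T^2+\delta=(T-\sqrt{-\delta})(T+\sqrt{-\delta})$, a product of distinct linear factors over the imaginary quadratic field $\Bone$ (distinct because $p\nmid 2\delta$, in particular $\delta\neq 0$ in characteristic $0$ — we are in the number field $M$ here, not in the residue field, so this is immediate). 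Therefore $M$ is diagonalizable over $\Bone$ with every eigenvalue equal to $\pm\sqrt{-\delta}$: there is an invertible $P\in\Mat 3(\Bone)$ with $PMP^{-1}$ diagonal, entries in $\{\sqrt{-\delta},-\sqrt{-\delta}\}$.

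It remains to pin down the multiplicities: I must show exactly one eigenvalue differs in sign from the other two. For this I would use the tangent space. By the Serre–Tate good reduction of $J$ and Lemma~\ref{lem:tensors}, the action of $\End_M(J)$ on the $M$-vector space $T^e_{J/M}(M)$ and on the $\resfield$-vector space $T^{e_0}_{\oJ/\resfield}(\resfield)$ both come from the action on the free $\OMp$-module $T^{\tilde e}_{\mathcal J/\OMp}(\OMp)$ of rank $3$, compatibly with $T(f)$. On the generic fibre, the action of $\sqrt{-\delta}\in\mathcal O\subset\End_M(J)$ on $T^e_{J/M}(M)$ has, by the definition of the CM type (Definition~\ref{def:CMtypeof}) and our enlargement of $M$ so that it contains all conjugates of $K$, the eigenvalue $\varphi(\sqrt{-\delta})$ with multiplicity one for each $\varphi$ in the CM type $\Phi$. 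Since $\Phi$ restricted to $\Bone=K_1$ is \emph{not} a CM type of $K_1$ (primitivity), the two embeddings $K_1\hookrightarrow M$ are not split $0{+}3$ across $\Phi$; so among the three values $\varphi(\sqrt{-\delta})$, $\varphi\in\Phi$, we see both of the two square roots of $-\delta$ in $M$, one appearing once and the other twice (three choices from a two-element set, not all equal). Reducing modulo $\frp$ via the free $\OMp$-module: the characteristic polynomial of $T(\sqrt{-\delta})$ on $T^{\tilde e}_{\mathcal J/\OMp}(\OMp)$ is a polynomial over $\OMp$ whose reduction mod $\frp$ is the char. poly. of the $\iota(\sqrt{-\delta})$-action on $T^{e_0}_{\oJ/\resfield}(\resfield)$; since $p\nmid 2\delta$, the two roots $\pm\sqrt{-\delta}$ stay distinct mod $\frp$, so the multiplicities $2$ and $1$ are preserved. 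Finally, the action of $\iota(\sqrt{-\delta})$ on $T^{e_0}_{\oJ/\resfield}(\resfield)$ is, via $F$ and Proposition~\ref{prop:products}, conjugate to the matrix action of $\iota(\sqrt{-\delta})\in\Mat 3(\Bone)$ reduced mod $\frp$; so that matrix has eigenvalues $\sqrt{-\delta}$ (twice) and $-\sqrt{-\delta}$ (once), up to overall labelling — equivalently $\pm\begin{pmatrix}\sqrt{-\delta}&&\\&\sqrt{-\delta}&\\&&-\sqrt{-\delta}\end{pmatrix}$. Combining with the diagonalizability over $\Bone$ from the previous paragraph gives the claimed $P$.

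The step I expect to be the main obstacle is the bookkeeping that links the \emph{characteristic-zero} CM-type data to the \emph{characteristic-$p$} matrix $\iota(\sqrt{-\delta})$: one must carefully track that the identifications of Lemma~\ref{lem:tensors} are $T(f)$-equivariant, that $\iota$ (built from $\iota_0$ and the isogeny $F$) is compatible with the induced action on tangent spaces after reduction, and that the diagonalization over $\Bone$ (valid because $M^2=-\delta I$ with $p\nmid 2\delta$, so $p\nmid 4\delta$ and the hypothesis $p>\frac18 B^{10}>\frac14 B^{7.5}$ of Lemma~\ref{lem:distinctmodp} guarantees $\delta$ is a unit at $\frp$) is the \emph{same} decomposition as the one coming from the tangent space — i.e.\ that the $2{+}1$ split is not lost under reduction. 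This last point is exactly where the bound $p>\frac18 B^{10}$, through Lemma~\ref{lem:distinctmodp}, is used, and it is what makes primitivity transfer to characteristic $p$.
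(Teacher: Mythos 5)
Your proposal follows essentially the same route as the paper's proof: diagonalizability of $\iota(\sqrt{-\delta})$ over $\Bone$ from $\iota(\sqrt{-\delta})^2=-\delta I_3$, primitivity of the CM type forcing two distinct eigenvalues on the tangent space of $J$ in characteristic zero, Lemma~\ref{lem:distinctmodp} keeping $\pm\sqrt{-\delta}$ distinct in $\resfield$, and transfer through the N\'eron model (Lemma~\ref{lem:tensors}) and the isogeny $F$. The only real difference is presentational: you track the $2{+}1$ multiplicities explicitly, whereas the paper argues by contradiction (if $\iota(\sqrt{-\delta})=\pm\sqrt{-\delta}I_3$ then the tangent-space action would have a single eigenvalue), which lets it avoid matching eigenvalues of the $\Bone$-matrix with eigenvalues in $\resfield$ at all.

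The one step you assert without justification --- and yourself flag as the expected obstacle --- is that the action of $\overline{\varphi}$ on $T^{e_0}_{\oJ/\resfield}(\resfield)$ is conjugate, via $F$, to the action of the matrix $\iota(\sqrt{-\delta})$ on $T^{0}_{E^3/\resfield}(\resfield)$. This is not automatic: in characteristic $p$ an isogeny need not induce an isomorphism on tangent spaces (Frobenius kills them), and moreover the entries of $\iota(\sqrt{-\delta})$ lie a priori only in $\frac{1}{n}\mathcal{R}$, so even defining its tangent-space action requires $p\nmid n$. The paper supplies exactly this: by Lemma~\ref{lem:pol_matrix} there is an isogeny $G$ with $GF=[n]$, and by \eqref{eq:n} we have $n\leq\frac{1}{4}B^3<p$, so $n$ is invertible in $\resfield$, $T(F)$ is invertible, and $T(G\overline{\varphi}F)=n\,T(F)^{-1}\circ T(\varphi)\circ T(F)$. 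In particular this is a second place where the hypothesis $p>\frac{1}{8}B^{10}$ is used, so your closing claim that beyond Proposition~\ref{prop:field} the bound enters only through Lemma~\ref{lem:distinctmodp} is not accurate. With this point filled in (plus the routine remark that eigenvalue multiplicities of the matrix over $\Bone$ are read off from the reduced characteristic polynomial, since $\pm\sqrt{-\delta}$ stay distinct mod $\frp$), your argument is correct and matches the paper's.
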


\begin{proof}
By Proposition \ref{prop:field}, reduction at a prime above $p>\mainbound$ induces a $\mathbb{Q}$-algebra homomorphism $\iota: K\hookrightarrow \End(E^3)\otimes \mathbb{Q}=\text{Mat}_3(\mathcal{B})$ with image contained in $\text{Mat}_3(\Bone)$, the ring of $3\times 3$ matrices over $\Bone$. 
Since $\iota(\sqrt{-\delta})^2=-\delta I_3$ and $\sqrt{-\delta}\in \Bone$,
we can take a change of basis over $\Bone$ to diagonalize the matrix
$\iota(\sqrt{-\delta})$. Moreover, the eigenvalues of $\iota(\sqrt{-\delta})$ are in $\{\pm\sqrt{-\delta}\}$. It suffices to show that $\iota(\sqrt{-\delta})$ has two distinct eigenvalues, i.e.   $\iota(\sqrt{-\delta})\neq \pm \sqrt{-\delta}I_3$. For this we will use the primitivity of the CM type. In order to detect the CM type, we will use the tangent space to $J=\Jacof{C}$ at the identity. 

By the N\'{e}ron mapping property, $\sqrt{-\delta}$ has a unique extension to an $\mathcal{O}_\mathfrak{p}$-endomorphism of the N\'{e}ron model $\mathcal{J}$, which we will denote by $\varphi$. 
Applying Proposition \ref{prop:linear}, we get an endomorphism $T(\varphi)$ of $T^{\tilde{e}}_{\mathcal{J}/\mathcal{O}_{\mathfrak{p}}}$
which induces an $\mathcal{O}_{\mathfrak{p}}$-linear endomorphism of  $T^{\tilde{e}}_{\mathcal{J}/\mathcal{O}_{\mathfrak{p}}}(\mathcal{O}_{\mathfrak{p}})$.
By the definition of primitivity of the CM type
(Definitions \ref{def:primitivetype} and~\ref{def:CMtypeof}),
the action of $T(\varphi)$ on $T^e_{J/M}(M)$ has two distinct eigenvalues, $\sqrt{-\delta}$ and $-\sqrt{-\delta}\in\OMp$.
By Lemmas~\ref{lem:Bnot1} and \ref{lem:distinctmodp}, the two eigenvalues $\pm\sqrt{-\delta}$ remain distinct in the residue field $\mathfrak{K}$, which has characteristic $p>\mainbound\geq \frac{1}{4}B^{7.5}>2$. Therefore, applying Lemma \ref{lem:tensors} again, we see that the action of  $T(\varphi)$ on $T^{e_0}_{\overline{J}/\mathfrak{K}}(\mathfrak{K})$ has two distinct eigenvalues. 

Now, let the isogeny $F: E^3\rightarrow \overline{J}$ be as in Sections \ref{sec:embprb} and~\ref{sec:coeffbound}.
By Lemma~\ref{lem:pol_matrix},
there is an integer $n>0$ and an isogeny $G: \overline{J}\rightarrow E^3$
such that $GF$ is multiplication by $n$
on~$E^3$. Then $\iota(\sqrt{-\delta})=n^{-1}G\overline{\varphi}F$, where $\overline{\varphi}$ denotes the $\mathfrak{K}$-endomorphism of $\overline{J}$ induced by $\varphi$. Recall from \eqref{eq:n} in Section \ref{sec:coeffbound} that $0<n\leq \frac{B^3}{4}<\mainbound<p$. Therefore, $n$ is invertible in $\mathfrak{K}$ and Proposition \ref{prop:linear} gives
\[T(G\overline{\varphi} F) = T(G)\circ T(\varphi)\circ T(F) = n  n^{-1}T(G)\circ T(\varphi)\circ T(F) = n  T(F)^{-1} \circ T(\varphi)\circ T(F).\]
The right-hand side is $n$ times a conjugate of $T(\varphi)$, whereby its eigenvalues
in~$\overline{\resfield}$ are $n$ times those of $T(\varphi)$. Therefore, $T(G\overline{\varphi} F)$ has two distinct eigenvalues in~$\resfield$ for its action on the tangent space $T^0_{E^3/\mathfrak{K}}(\mathfrak{K})$ of $E^3$ at the identity.
By Proposition \ref{prop:products}, we have
$T_{E^3/\mathfrak{K}}^0(\mathfrak{K})\cong T_{E/\mathfrak{K}}^0(\mathfrak{K})\oplus T_{E/\mathfrak{K}}^0(\mathfrak{K})\oplus T_{E/\mathfrak{K}}^0(\mathfrak{K})$
where $T_{E/\mathfrak{K}}^0(\mathfrak{K})$ denotes the tangent space of $E$ at the identity.
Now suppose that $n\iota(\sqrt{-\delta})=\pm n\sqrt{-\delta}I_3$.
Then $T(G\overline{\varphi} F) = T(n\iota(\sqrt{-\delta})) =
\pm n \sqrt{-\delta}I_{3}$ has only one eigenvalue.
Contradiction.
\end{proof}

\begin{proof}[Proof of Theorem~\ref{thm:main}.]
Suppose that $p > \mainbound$. Recall from the end of Section~\ref{sec:coeffbound}
that $\iota(\mu)$ has coefficients in a quadratic field
$\Bone\ni \sqrt{-\delta}$.
Applying Proposition \ref{prop:2evals}, we see that since $\mu$ commutes with $\sqrt{-\delta}$, the matrix
$P\iota(\mu)P^{-1}$ is of the form
\[\begin{pmatrix} * & * & 0\\
*& * & 0\\
0 & 0 & *\end{pmatrix}.\]
But this means that the bottom right entry of $P\iota(\mu)P^{-1}$ is a root of the (irreducible degree six) minimal polynomial of $\mu$ over $\mathbb{Q}$. This gives a contradiction because the entries of the matrix $P\iota(\mu)P^{-1}$ lie in the quadratic field $\Bone$. This completes the proof of Theorem~\ref{thm:main}.
\end{proof}

\section{Geometry of numbers}\label{sec:geom}

The following is a reformulation and proof of Proposition~\ref{prop:geomnumbers}.

\begin{proposition}
Let $\mathcal{O}$ be an order in a sextic CM field $K$ with totally real cubic subfield $K_+$.
\begin{enumerate}
 \item \label{case1} If $K$ contains no imaginary quadratic subfield, then there exists \mbox{$\mu\in\OO$} such that $K = \QQ(\mu)$ and $\mu^2$ is a totally negative element in $K_+$ with $0<-\mathrm{Tr}_{K_+/\QQ}(\mu^2)\leq (\frac{6}{\pi})^{2/3}|\Delta(\OO)|^{1/3}$. 
\item \label{case2} If $K$ contains an imaginary quadratic subfield~$K_1$,
we write $\mathcal{O}_i = K_i\cap \mathcal{O}$ for $i\in\{1,+\}$.
Then $\exists$ \mbox{$\mu\in \OO$}
such that $K = \QQ(\mu)$ and $\mu^2$ is a totally negative element in 
$K_+$ satisfying \[0 < -\Tr_{K_+/\mathbb{Q}}(\mu^2) \leq |\Delta(\OO_1)|(1+2\sqrt{|\Delta(\OO_+)|}).\]
\end{enumerate}
\end{proposition}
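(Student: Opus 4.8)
The plan is to prove both parts by geometry of numbers, applied in each case to (a rescaling of) the trace form $\nu \mapsto \Tr_{K_+/\QQ}(\nu^2)$ on a well-chosen lattice inside $\OO$; here and below $\OO_+ = K_+ \cap \OO$ is a lattice of full rank in $K_+$ with $\Delta(\OO_+) > 0$, and in case~(2) likewise $\OO_1 = K_1 \cap \OO$ with $\Delta(\OO_1) < 0$. For case~(1), fix a totally negative $r \in K_+$ with $K = K_+(\sqrt r)$ and let $\Lambda = \{\mu \in \OO : \overline{\mu} = -\mu\} = \OO \cap K_+\sqrt r$ be the rank-$3$ lattice of purely imaginary elements of $\OO$; on it $Q(\mu) := -\Tr_{K_+/\QQ}(\mu^2)$ is a positive definite quadratic form. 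The first point is that \emph{every} nonzero $\mu \in \Lambda$ already generates $K$: every embedding of $\QQ(\mu)$ into $\CC$ sends $\mu$ to a nonzero purely imaginary number, so $[\QQ(\mu):\QQ]$ is even, hence $2$ or $6$, and the value $2$ would exhibit an imaginary quadratic subfield of $K$, which is excluded by hypothesis. Hence it suffices to produce one nonzero $\mu \in \Lambda$ with $Q(\mu) \le (6/\pi)^{2/3}|\Delta(\OO)|^{1/3}$.

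To do so, set $m = [\OO : \OO_+ \oplus \Lambda]$, the sum being direct because $K_+$ and $K_+\sqrt r$ are complementary $\QQ$-subspaces of $K$. I would show that the discriminant $D_Q$ of $(\Lambda, Q)$ equals $m^2 |\Delta(\OO)| / (64\, \Delta(\OO_+))$: a purely imaginary element has trace $0$, so the trace form $\Tr_{K/\QQ}(xy)$ of $K$ makes $K_+$ and $K_+\sqrt r$ orthogonal; its Gram matrix on $\OO_+ \oplus \Lambda$ is therefore block diagonal, equal on one block to twice the trace form of $\OO_+$ (since $\Tr_{K/K_+}$ is multiplication by $2$ on $K_+$) and on the other block to $-2Q$, and comparing determinants with $m^2 \Delta(\OO)$ yields the formula. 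Now complex conjugation is a $\ZZ$-linear involution of $\OO$ (here one uses that $\OO$ is stable under complex conjugation, which holds in the setting of Theorem~\ref{thm:main}) whose $(+1)$- and $(-1)$-eigenlattices are $\OO_+$ and $\Lambda$, both of rank $3$; by the classification of $\ZZ_2[\ZZ/2]$-lattices, $m$ is a power of $2$ with $m \le 2^{\min(3,3)} = 8$. Consequently $D_Q \le |\Delta(\OO)|$ (using $\Delta(\OO_+) \ge 1$), and Minkowski's convex body theorem, with the Euclidean structure given by $Q$ (under which $\Lambda$ has covolume $\sqrt{D_Q}$), produces a nonzero $\mu \in \Lambda$ with $Q(\mu) \le (6/\pi)^{2/3} D_Q^{1/3} \le (6/\pi)^{2/3}|\Delta(\OO)|^{1/3}$, finishing case~(1).

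For case~(2), $K = K_1 K_+$ with $K_1 \cap K_+ = \QQ$. The order $\OO_1$ contains $\theta := \sqrt{\Delta(\OO_1)}$, and since $\OO$ is a ring, $\mu := \theta\nu \in \OO$ for every $\nu \in \OO_+$; then $\mu^2 = \Delta(\OO_1)\nu^2 \in K_+$ is totally negative when $\nu \neq 0$, and $-\Tr_{K_+/\QQ}(\mu^2) = |\Delta(\OO_1)|\, \Tr_{K_+/\QQ}(\nu^2)$. Moreover $\QQ(\mu) = K$ as soon as $\nu \notin \QQ$: then $\QQ(\nu) = K_+$ (a cubic field has no subfield other than $\QQ$ and itself), so $\QQ(\mu^2) = \QQ(\nu^2) = K_+ \subseteq \QQ(\mu)$, and since $\mu$ is totally imaginary this forces $\QQ(\mu) = K$. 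It remains to find $\nu \in \OO_+ \setminus \QQ$ with $\Tr_{K_+/\QQ}(\nu^2) \le 1 + 2\sqrt{|\Delta(\OO_+)|}$. Let $\mu_1 \le \mu_2 \le \mu_3$ be the successive minima of $\nu \mapsto \Tr_{K_+/\QQ}(\nu^2)$ on $\OO_+$. Since $\Tr_{K_+/\QQ}(1) = 3$, we have $\mu_1 \le 3$; if some vector attaining $\mu_1$ is irrational, take it. Otherwise $\mu_1 = 3$ is attained only by $\pm 1$, so any vector attaining $\mu_2$ is $\QQ$-linearly independent from $1$, hence irrational, and Minkowski's second theorem bounds $\mu_2$ by $\frac{6}{\pi\sqrt 3}\sqrt{|\Delta(\OO_+)|}$. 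In both cases $\Tr_{K_+/\QQ}(\nu^2) \le 1 + 2\sqrt{|\Delta(\OO_+)|}$, because $3 \le 1 + 2\sqrt{|\Delta(\OO_+)|}$ and $\frac{6}{\pi\sqrt 3} < 2$; taking $\mu = \theta\nu$ finishes case~(2). (Alternatively one works directly in the corank-$1$ sublattice $\ker(\Tr_{K_+/\QQ}\colon \OO_+ \to \ZZ)$, whose nonzero elements are automatically irrational.)

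The main obstacle lies in case~(1): the determinant bookkeeping that gives $D_Q \le |\Delta(\OO)|$ depends crucially on the index bound $[\OO : (K_+ \cap \OO) \oplus \Lambda] \le 8$, which in turn comes from the $2$-adic structure of complex conjugation as an involution of $\OO$ — and hence genuinely uses that $\OO$ is conjugation-stable (automatic when $\OO \cong \End$, as in Theorem~\ref{thm:main}). In case~(2) the only real subtlety is the constraint $\nu \notin \QQ$, which prevents one from simply invoking Hermite's bound on the full lattice $\OO_+$ and forces either the short case analysis above or the descent to a codimension-one sublattice together with control of its discriminant.
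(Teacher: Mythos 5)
Your argument is correct, and while it follows the paper's overall strategy (geometry of numbers, plus the construction $\mu=\sqrt{\Delta(\OO_1)}\,\nu$ in case~(2)), the mechanism differs in both cases. In case~(1) the paper applies Minkowski's first theorem to the full rank-$6$ lattice $\OO\subset\CC^3$ with the box $\{|\Real(x_i)|<1,\ \sum_i\im(x_i)^2<R^2\}$ and then antisymmetrizes, taking $\mu=\gamma-\overline{\gamma}$; you instead work directly in the rank-$3$ sublattice $\Lambda$ of purely imaginary elements, compute its discriminant from the orthogonal splitting of the trace form on $\OO_+\oplus\Lambda$ together with the index bound $[\OO:\OO_+\oplus\Lambda]\leq 2^3$ (your bookkeeping $m^2|\Delta(\OO)|=64\,\Delta(\OO_+)D_Q$ and the index bound are both correct; the index claim follows directly from the injection $\OO/(\OO_+\oplus\Lambda)\hookrightarrow\Lambda/2\Lambda$ induced by $x\mapsto x-\overline{x}$), and then apply Minkowski to a ball. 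Remarkably this reproduces exactly the constant $(6/\pi)^{2/3}$, and in fact gives the marginally stronger bound $(6/\pi)^{2/3}\bigl(m^2|\Delta(\OO)|/(64\Delta(\OO_+))\bigr)^{1/3}$, at the cost of the extra $2$-adic index argument; the paper's box avoids that bookkeeping entirely. In case~(2) the paper again uses Minkowski's first theorem with the box $\{|x_1|<1,\ |x_2|,|x_3|<R\}$, whose first constraint automatically rules out rational $\gamma$, whereas you handle the constraint $\nu\notin\QQ$ via successive minima and Minkowski's second theorem; your computation $\mu_2\leq\frac{6}{\pi\sqrt3}\sqrt{|\Delta(\OO_+)|}<2\sqrt{|\Delta(\OO_+)|}$ is correct and the case split is sound (the minimum is attained since the form is integer-valued), so both routes land on $1+2\sqrt{|\Delta(\OO_+)|}$. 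One caveat you rightly flag: your case~(1) uses that $\OO$ is stable under complex conjugation (to make $\Lambda$ interact with $\OO_+$ as claimed); note that the paper's own proof has the same implicit requirement, since it needs $\gamma-\overline{\gamma}\in\OO$, and this is harmless in the intended application where $\OO$ comes from an endomorphism ring preserved by the Rosati involution.
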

\begin{proof}
Let $\Phi=\{\phi_1,\phi_2,\phi_3\}$ be the set of embeddings of $K$ into $\CC$ up to complex conjugation. 
We identify $K\otimes_{\QQ}\RR$ with $\CC^3$ via the $\RR$-algebra isomorphism $K\otimes_\QQ\RR\rightarrow\CC^3: x\otimes a \mapsto (a\phi_1(x), a\phi_2(x),a\phi_3(x))$.

\noindent \textit{(1)}
The order $\OO\subset K$ is a lattice of co-volume $2^{-3}|\Delta(\OO)|^{1/2}$ in $\CC^3$.
We define the symmetric  convex body $$
\calC_R = \{x=(x_1,x_2,x_3)\in \CC^3: |\Real(x_i)|<1\ \text{for all $i$, }\ \sum_{i}\im(x_i)^2<R^2\}  \subset \CC^3.
$$

Next, we claim that if $R=(\frac{3}{4\pi})^{1/3}|\Delta(\OO)|^{1/6}+\epsilon$ for some $\epsilon>0$, then there is a non-zero $\gamma\in\OO\cap \calC_R$ such that $K = \QQ(\gamma)$.
Indeed, suppose that $R=(\frac{3}{4\pi})^{1/3}|\Delta(\OO)|^{1/6}+\epsilon$.
Then we have 
$$\vol(\calC_R) = 2^3(\frac{4}{3}\pi R^3)>2^3|\Delta(\OO)|^{1/2} = 2^6\covol(\OO).$$
By Minkowski's first convex body theorem (see Siegel~\cite[Theorem 10]{Siegel}),
there is a non-zero element $\gamma$ in $\OO\cap\calC_R$.
If $\gamma\in K_+$, then we have
$|\N_{K_+/\QQ}(\gamma)| = \prod_{\phi_i\in\Phi}|\Real(\phi_i(\gamma))|<1$,
so we get $\gamma=0$, a contradiction. Hence $\gamma\in\OO\cap \calC_R$ and $\gamma\notin K_+$.
To prove the claim, it only remains to prove
that $\gamma$ generates~$K$. As $K$ has degree~$6$,
the field generated by $\gamma$ has degree $1$, $2$, $3$ or $6$.
Since any subfield of a CM field is either totally real or a CM field, we find that either $\gamma$
is totally real (hence in $K^+$, contradiction),
or generates a CM subfield of $K$.
As CM fields have even degree and
we are in case \eqref{case1}, where $K$ has no imaginary
quadratic subfield, we find that $\mathbb{Q}(\gamma)=K$.
This proves the claim.

Let $\mu = \gamma-\overline{\gamma}$. Then $\mu^2$ is a totally negative element in $K_+$, hence $\mathbb{Q}(\mu)=K$. We get 
$$
-\Tr_{K_+/\QQ}(\mu^2)=-\sum_{i}\phi_i(\mu^2) = -\sum_{\phi_i\in\Phi}\phi_i((\gamma-\overline{\gamma})^2)=4\sum_{i}\im(\phi_i(\gamma))^2< 4R^2.
$$
Since $\gamma$ is an algebraic integer in~$K$, we have $\Tr_{K_+/\QQ}(\mu^2)\in\ZZ$. So when
we let $\epsilon$ tend to $0$, we get $-\Tr_{K_+/\QQ}(\mu^2)\leq 4(\frac{3}{4\pi})^{2/3}|\Delta(\OO)|^{1/3}= (\frac{6}{\pi})^{2/3}|\Delta(\OO)|^{1/3}$, which proves \eqref{case1}.

\noindent \textit{(2)}
The order $\OO_+\subset K_+$ is a lattice of co-volume $|\Delta(\OO_+)|^{1/2}$
in $\RR^3$. We define the symmetric convex body
$$
\calC_R = \{x=(x_1,x_2,x_3)\in \RR^3: |x_1| < 1, |x_2| < R, |x_3| < R\}\subset \RR^3.
$$

Next, we claim that if $R=|\Delta(\OO_+)|^{1/4}+\epsilon$ for some $\epsilon>0$, then there is a non-zero $\gamma\in\OO_+\cap \calC_R$ such that $\gamma\in K_+\setminus \QQ$.
Indeed, we then have $\vol(\calC_R) = 2^3R^2>2^3|\Delta(\OO_+)|^{1/2} = 2^3\covol(\OO_+)$.
By Minkowski's first convex body theorem (see Siegel~\cite[Theorem 10]{Siegel}),
there is a non-zero element $\gamma$ in $\OO_+\cap\calC_R$.
If $\gamma\in \QQ$, then $\gamma\in\ZZ$, but $|\gamma|< 1$,
so we get $\gamma=0$, a contradiction. Hence $\gamma\in\OO_+\cap \calC_R$ and $\gamma\notin \QQ$.
This proves the claim.

Let $\mu = \sqrt{\Delta(\OO_1)}\gamma$. Then $\mu^2$ is a totally negative element in $K_+$. We get 
$
-\Tr_{K_+/\QQ}(\mu^2)=|\Delta(\OO_1)| \sum_{i}\phi_i(\gamma^2) \leq 
|\Delta(\OO_1)| (1+2R^2).
$
Since $\gamma$ is an algebraic integer in~$K_+$, we have $\Tr_{K_+/\QQ}(\mu^2)\in\ZZ$. So when
we let $\epsilon$ tend to~$0$, we get $-\Tr_{K_+/\QQ}(\mu^2)\leq |\Delta(\OO_1)|(1+2|\Delta(\OO_+)|^{1/2})$, as desired.
 \qedhere
\end{proof}

\section{Invariants}\label{sec:invariants}

In this section, we prove Theorem \ref{thm:invariantdenom}. 
Let $j=\frac{u}{\Delta^l}$ be as in that theorem: a quotient of invariants 
of hyperelliptic (respectively Picard) curves $y^2=F(x,1)$ (respectively $y^3=f(x)$) of genus~$3$, where~$\Delta$ is the 
discriminant of~$F$ (respectively $f$) and $u$ is an invariant of weight $56l$ (respectively $12l$).
Let $C$ be such a curve, not necessarily with CM, over a number field~$M$.

\begin{theorem}\label{thm:stablebad}
In the situation above, if $j(C)$ has negative valuation at a prime $\mathfrak{p}$ with $\mathfrak{p}\nmid 6$,
then $C$ does not have potential good reduction at~$\mathfrak{p}$.
\end{theorem}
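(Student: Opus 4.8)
The plan is to relate the negative valuation of $j(C) = u/\Delta^l$ to the geometry of the stable reduction of $C$, and to use that $u$ and $\Delta$ are invariants built from a binary octic $F$. Recall that a genus-$3$ hyperelliptic curve $C : y^2 = F(x,1)$ is determined by the binary octic form $F$, and that $\Delta$ is the discriminant of $F$, which vanishes exactly when $F$ acquires a repeated root, i.e. when the branch locus of the degree-$2$ map $C\to\mathbb{P}^1$ degenerates. The first step is to normalize: after a finite extension of $M$ (harmless for the conclusion, since potential good reduction is insensitive to base change), we may arrange a stable model of $C$ over $\mathcal{O}_{\mathfrak{p}}$, and we may rescale $F$ by an element of $M^\times$ so that $F$ has coefficients in $\mathcal{O}_{\mathfrak{p}}$ with at least one unit coefficient. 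Since $u$ has weight $56l$ and $\Delta^l$ has weight $56l$, the quotient $j = u/\Delta^l$ is invariant under the scaling $F \mapsto cF$, so $j(C)$ is unchanged; thus $\mathrm{ord}_{\mathfrak{p}}(j(C)) = l\cdot\mathrm{ord}_{\mathfrak{p}}(u(F)) - l\cdot\mathrm{ord}_{\mathfrak{p}}(\Delta(F))$, and this being negative forces $\mathrm{ord}_{\mathfrak{p}}(\Delta(F)) > 0$, because $u(F)$ is a polynomial with integer coefficients in the now-integral coefficients of $F$, hence $\mathrm{ord}_{\mathfrak{p}}(u(F)) \geq 0$.

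The heart of the argument is then: $\mathrm{ord}_{\mathfrak{p}}(\Delta(F)) > 0$ together with $F$ having a unit coefficient means the reduction $\bar F$ of $F$ modulo $\mathfrak{p}$ is a nonzero binary octic with a repeated root. One must now argue that this forces bad stable reduction. The naive model $y^2 = \bar F(x,1)$ is singular (it has a node or worse at the repeated root of $\bar F$, provided $\mathfrak{p}\nmid 2$, which is part of the hypothesis $\mathfrak{p}\nmid 6$), so the special fibre of this model is not a smooth genus-$3$ curve. The subtlety — and the main obstacle — is that singularity of this particular model does not immediately imply that the \emph{stable} model has singular special fibre: one could in principle blow up and resolve, or the curve could have potential good reduction realized by a different, non-hyperelliptic-looking model. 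To close this gap I would invoke the theory of admissible/stable reduction of hyperelliptic curves (cluster pictures à la Dokchitser–Dokchitser–Maistret–Morgan, or the older analysis via the discriminant): over a large enough extension, the stable reduction of $y^2 = F$ is governed by the combinatorics of how the eight roots of $F$ collide $\mathfrak{p}$-adically, and the special fibre is smooth of genus $3$ if and only if after the optimal $\mathrm{PGL}_2(\mathcal{O}_{\mathfrak{p}})$-transformation the roots remain distinct modulo $\mathfrak{p}$, equivalently $\mathrm{ord}_{\mathfrak{p}}\Delta = 0$ for the optimal model. But $\mathrm{ord}_{\mathfrak{p}}\Delta$ of the \emph{normalized} $F$ (unit content) is already an $\mathrm{SL}_2(\mathcal{O}_{\mathfrak{p}})$-invariant lower bound, and one shows no $\mathrm{GL}_2$-transformation over the base can reduce it to $0$ once it is positive while keeping integrality; hence no model achieves good reduction, i.e. $C$ has geometric bad reduction at $\mathfrak{p}$.

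An alternative, perhaps cleaner route avoiding cluster machinery: argue contrapositively. Suppose $C$ has potential good reduction at $\mathfrak{p}$; then after a finite extension $C$ has a smooth proper model $\mathcal{C}/\mathcal{O}_{\mathfrak{p}'}$ with special fibre $\bar C$ a smooth genus-$3$ hyperelliptic curve. A smooth hyperelliptic curve of genus $3$ in characteristic $\neq 2$ has a canonical equation $y^2 = G(x,1)$ with $G$ a binary octic of nonzero discriminant; the relation between this integral model and the original $F$ is a transformation in $\mathrm{GL}_2(M')\times M'^\times$ (change of coordinate on $\mathbb{P}^1$ and rescaling $y$), and one checks, using that both $F$ and $G$ have good (unit-content, separable-reduction) form, that $\mathrm{ord}_{\mathfrak{p}'}(\Delta(F))$ and $\mathrm{ord}_{\mathfrak{p}'}(\Delta(G))$ differ by $56\cdot\mathrm{ord}_{\mathfrak{p}'}\det$ of the transformation matrix plus the $y$-scaling contribution — the same weight-$56$ factor by which $u$ transforms — so these exactly cancel in $j$, giving $\mathrm{ord}_{\mathfrak{p}'}(j(C)) = l\,\mathrm{ord}_{\mathfrak{p}'}(u(G)) - l\,\mathrm{ord}_{\mathfrak{p}'}(\Delta(G)) \geq 0$ since $\Delta(G)$ is a unit and $u(G)$ is integral. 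As valuations don't decrease under the base extension $M\subset M'$ up to the ramification index, this contradicts $\mathrm{ord}_{\mathfrak{p}}(j(C)) < 0$. I expect the main technical obstacle in either route to be the bookkeeping that the weight-$56l$ homogeneity of $u/\Delta^l$ makes it a genuine function on the moduli stack (insensitive to both the $\mathrm{GL}_2$-action and the $y$-rescaling), so that its valuation really is an invariant of $C/M_{\mathfrak{p}}$ and not of a chosen equation; once that is pinned down, the reduction-theory input is the standard statement that good reduction of a hyperelliptic curve in residue characteristic $\neq 2$ is detected by the discriminant of an optimally-chosen octic model.
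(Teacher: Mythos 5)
Your second (contrapositive) route is essentially the paper's own proof: assuming potential good reduction, one takes an integral model $y^2=P(x)$ with $P\in\OMp[x]$ whose reduction is separable of degree $7$ or $8$ (Proposition~\ref{prop:hyper}, i.e.\ Liu's good-reduction criterion in residue characteristic $\neq 2$), notes $u(P)\in\OMp$ and $\Delta(P)\in\OMp^{\ast}$, and uses the equal-weight homogeneity of $j=u/\Delta^l$ under changes of model to conclude $j(C)\in\OMp$, contradicting the negative valuation. Your first, cluster-picture sketch is the less complete of your two routes and is not needed, but the alternative argument you give is correct and matches the paper's.
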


\begin{proposition}[Example 10.1.26 in \cite{Liu}]\label{prop:hyper} Let $S=\Spec (\OMp)$, where $\OMp$ is a discrete valuation ring with field of fractions $M$ and residue field $\resfield$
with $\text{char}(\resfield)\neq 2$. Let $C$ be a hyperelliptic curve of genus $g\geq1$ over~$M$ defined by an affine equation
$
y^2=P(x),
$
with $P(x)\in M[x]$ separable. Then $C$ has good reduction if and only if 
$C$ is isomorphic to a curve given
by an equation as above with $P(x)\in\OMp[x]$ such that the image
of $P(x)$ in $\resfield[x]$ is separable of degree $2g+1$ or $2g+2$. Furthermore, any such isomorphism is given by a change of variables as in \cite[Corollary 7.4.33]{Liu}.\qed
\end{proposition}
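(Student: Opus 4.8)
Since the statement is Example~10.1.26 of Liu, the plan is to reconstruct the argument behind that citation; write $S=\Spec(\OMp)$ and recall throughout that $\mathrm{char}(\resfield)\neq 2$. The ``if'' direction is the easy half. Given $P(x)\in\OMp[x]$ whose reduction $\overline P\in\resfield[x]$ is separable of degree $2g+1$ or $2g+2$, I would build the candidate model $\mathcal C/S$ by gluing the two affine charts $\{y^2=P(x)\}$ and $\{w^2=v^{2g+2}P(1/v)\}$ along $v=1/x$, $w=y/x^{g+1}$. Then $\mathcal C$ is finite of degree~$2$ over $\mathbb P^1_S$, hence proper and flat over $S$, and the Jacobian criterion applied chart by chart shows that $\mathcal C\to S$ is smooth exactly because, in each fibre, the defining polynomial ($P$ over $M$ and $\overline P$ over $\resfield$) is separable of degree $2g+1$ or $2g+2$; here separability of $\overline P$ already forces separability of $P$ over $M$, since a repeated root of $P$ would reduce to a repeated root of $\overline P$. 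The fibres are then geometrically connected hyperelliptic curves of genus $g$, so $\mathcal C$ witnesses good reduction of $\mathcal C_M\cong C$.

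For the ``only if'' direction, suppose $C$ has good reduction and let $\mathcal C/S$ be a smooth proper model; since $g\geq 1$, this is the minimal regular model of $C$. The coordinate $x$ gives a degree-$2$ map $\pi\colon C\to\mathbb P^1_M$ with deck involution $\iota$, and automorphisms of the generic fibre of a minimal regular model extend to it, so $\iota$ extends to $\overline\iota\in\mathrm{Aut}_S(\mathcal C)$. I would then form $\mathcal D:=\mathcal C/\langle\overline\iota\rangle$. Because $2$ is invertible in $\resfield$ the quotient is tame, so $\mathcal D$ has regular total space and smooth special fibre, hence $\mathcal D\to S$ is smooth and proper; its generic fibre is $\mathbb P^1_M$, a smooth proper $S$-curve has locally constant fibre genus, and an $M$-point of $\mathbb P^1_M$ extends to an $S$-point by the valuative criterion of properness, so $\mathcal D\cong\mathbb P^1_S$.

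Now $\mathcal C\to\mathbb P^1_S$ is a double cover in residue characteristic $\neq 2$, hence is described by a line bundle $\mathcal L$ on $\mathbb P^1_S$ together with a section of $\mathcal L^{\otimes 2}$ cutting out the branch divisor. Since $\mathrm{Pic}(\mathbb P^1_S)=\ZZ$ has no $2$-torsion and the branch divisor has relative degree $2g+2$, one gets $\mathcal L\cong\mathcal O(g+1)$, and the section is, after dehomogenising, a polynomial $P(x)\in\OMp[x]$ of degree $2g+1$ or $2g+2$ with $\{y^2=P(x)\}$ a model of $C$. Running the Jacobian criterion in reverse, smoothness of $\mathcal C/S$ then forces $\overline P$ to be separable of degree exactly $2g+1$ or $2g+2$, which is the desired conclusion.

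The ``Furthermore'' clause I would deduce from the classification of isomorphisms of hyperelliptic curves over a field: any isomorphism $C\xrightarrow{\sim}\{y^2=P(x)\}$ commutes with the hyperelliptic involutions (unique for $g\geq 2$, and part of the data for $g=1$), hence descends to an automorphism of $\mathbb P^1$, i.e.\ a M\"obius transformation $x\mapsto(ax+b)/(cx+d)$, with the induced action on $y$ then determined up to a unit scalar --- precisely a change of variables of the shape in \cite[Corollary~7.4.33]{Liu}. I expect the main obstacle to be the opening move of the ``only if'' direction: extending the degree-$2$ structure to the smooth model $\mathcal C$ and checking that the $\mathbb Z/2\mathbb Z$-quotient $\mathcal D$ remains smooth and proper over $S$. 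This rests on uniqueness of the minimal regular model for $g\geq 1$ together with tameness of the quotient in residue characteristic $\neq 2$, and it is exactly where the hypothesis $\mathrm{char}(\resfield)\neq 2$ is indispensable; for $g=1$ one must also be a little careful, since the involution is attached to the chosen degree-$2$ map rather than being intrinsic.
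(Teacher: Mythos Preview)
The paper does not actually prove this proposition: it is quoted verbatim as Example~10.1.26 of Liu and closed with a \qed, with no argument supplied. So there is nothing in the paper to compare your proof against beyond the bare citation.

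That said, your reconstruction is essentially the argument Liu gives. The ``if'' direction via the explicit glued model and the Jacobian criterion is standard and correct. For the ``only if'' direction, your strategy---extend the hyperelliptic involution to the smooth proper (equivalently, minimal regular) model, take the quotient, identify it with $\mathbb P^1_S$, and read off the double-cover data---is exactly the route taken in Liu's text, and your identification of the key inputs (extension of automorphisms to the minimal model, tameness of the $\ZZ/2\ZZ$-quotient when $2$ is a unit, constancy of fibre genus and the valuative criterion to get $\mathbb P^1_S$) is on target. The ``Furthermore'' clause is likewise handled as in Liu: any isomorphism of hyperelliptic curves over a field intertwines the hyperelliptic involutions and hence descends to a fractional linear change of variables on~$x$, with the resulting transformation of~$y$ determined up to a scalar.

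One small point worth tightening: when you assert $\mathrm{Pic}(\mathbb P^1_S)\cong\ZZ$, you are implicitly using that $\OMp$ is a discrete valuation ring (hence has trivial Picard group), so that $\mathrm{Pic}(\mathbb P^1_S)\cong\mathrm{Pic}(S)\times\ZZ$ collapses to $\ZZ$. This is of course true here, but it is the step that would fail over a more general base, so it is worth flagging explicitly.
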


\begin{proof}[Proof of Theorem~\ref{thm:stablebad}]
For the Picard case: let $C$ be a Picard curve of potentially good reduction at~$\mathfrak{p}$,
given by an affine equation as in~(\ref{eq:picard}).
Corollary~3.20 in Lercier, Liu, Lorenzo Garc\'ia and Ritzenthaler~\cite{LLLR}
says that
$v(a_2)\geq \frac{2}{12} v(\Delta)$ and
$v(a_4)\geq \frac{4}{3} v(\Delta)$,
where $v$ is the $\mathfrak{p}$-adic valuation.
In order to prove $v(j(C))\geq 0$, it now suffices
to prove $v(a_3)\geq \frac{3}{12} v(\Delta)$.
So suppose $v(a_3) = \frac{3}{12} v(\Delta)-e$ with $e>0$.
Writing out the discriminant gives
$
a_{3}^{4} + 3^{-3}(4 a_{2}^{3} -144 a_2a_4) a_{3}^{2}  + 3^{-3}(-16 a_{2}^{4} a_{4} + 128 a_{2}^{2} a_{4}^{2} - 256 a_{4}^{3}+\Delta)=0,
$
but the term $a_3^4$ has strictly lower valuation than all other terms,
which is impossible, hence $v(j(C))\geq 0$.

For the hyperelliptic case:
assume that $C$ has potential good reduction at $\mathfrak{p}$ with $\mathfrak{p}\nmid 6$. Extend the base field so that $C$ has good reduction, and then take a model $y^2=P(x)\in\OMp[x]$ such that the image
of $P(x)$ in $\resfield[x]$ is separable of degree $2g+1$ or $2g+2$, as in
Proposition~\ref{prop:hyper}. This changes the coefficients, but not the
normalized invariant $j = \frac{u}{\Delta^l}$ by the definition of hyperelliptic curve invariants. 
Since $P(x)$ has coefficients in $\OMp$, it follows that $u(P(x))\in\OMp$. Also, we have $\Delta(P(x))\in\OMp^*$ by 
Proposition~\ref{prop:hyper}, hence $j(C)\in\OMp$. This contradicts the assumption and, therefore, the theorem follows.
\end{proof}

\begin{remark}
	Results of Bouw, Koutsianas, Sijsling and Wewers \cite{BKSW} give an alternative proof
	of Theorem~\ref{thm:stablebad} for Picard curves as follows.
	Numbered results referenced below are from~\cite{BKSW};
	some assume $\mathfrak{p}\nmid 3$.
	
	First of all, \cite{BKSW} distinguishes between ``special'' Picard curves
	($\overline{M}$-isomorphic to $S:y^3 = x^4 - 1$, Lemma~1.17)
	and ``non-special'' Picard curves (the rest).
	We compute $\Delta(S)=-2^8$, so $v(\Delta(S))=0$, hence
	all special
	Picard curves $C$ satisfy $v(j(C))\geq 0$.
	
	Now let $C$ be a non-special Picard curve with potential good reduction.
    Extend the base field to have good reduction.
	Then choose a model of the form $y^3 = cf(x)$ with
	$v(c)\in\{0,1,2\}$ and $f$ monic quartic and ``reduced''
	as in Definitions~3.1.1/4 and Corollary~3.1.18.
	Proposition 3.2.1 gives
	$v(c)=0$
	and $v(\Delta(f))=0$.
	Extend $M$ with $\sqrt[3]{c}$ to get the model $y^2 = f(x)$.
	Then complete the $4$th power to get rid of the $x^3$ coefficient in~$f$.
	Now we have $v(a_2)$, $v(a_3)$, $v(a_4)\geq 0$ and $v(\Delta)=0$, hence $v(j(C))\geq 0$.
\end{remark}

\noindent\textit{Proof of Theorem~\ref{thm:invariantdenom}.}
By Lemma~\ref{lem:Bnot1}, the bound $B$ appearing in Theorem~\ref{thm:main} satisfies $B\geq 2$. Therefore, for $p\leq 3$ we have $p<\mainbound$.
Hence
we assume that $p>3$. 
 In the situation of Theorem~\ref{thm:invariantdenom}, we showed
 in Theorem~\ref{thm:stablebad} that the curve
 does not have potential good reduction,
 hence Theorem~\ref{thm:main} applies.
\qed

\bibliographystyle{plain}
\bibliography{mybib}

\begin{thebibliography}{10}

\bibitem{stacks-project}
The Stacks~Project Authors.
\newblock Stacks project.
\newblock \burl{http://stacks.math.columbia.edu}, 2016.

\bibitem{BILV}
Jennifer~S. Balakrishnan, Sorina Ionica, Kristin Lauter, and Christelle
  Vincent.
\newblock Constructing genus-3 hyperelliptic {J}acobians with {CM}.
\newblock {\em LMS J. Comput. Math.}, 19(suppl. A):283--300, 2016.

\bibitem{BCLLMNO}
Irene Bouw, Jenny Cooley, Kristin~E. Lauter, Elisa Lorenzo~Garc\'ia, Michelle
  Manes, Rachel Newton, and Ekin Ozman.
\newblock Bad reduction of genus $3$ curves with complex multiplication.
\newblock Women in Numbers Europe, Research Directions in Number Theory,
  Association for Women in Mathematics Series Volume 2, Springer, 2015.

\bibitem{BKSW}
Irene Bouw, Angelo Koutsianas, Jeroen Sijsling, and Stefan Wewers.
\newblock Conductor and discriminant of {P}icard curves.
\newblock Preprint, \href{https://arxiv.org/abs/1902.09624}{arXiv:1902.09624},
  2019.

\bibitem{BouyerStreng}
Florian Bouyer and Marco Streng.
\newblock Examples of {CM} curves of genus two defined over the reflex field.
\newblock {\em LMS J. Comput. Math.}, 18(1):507--538, 2015.

\bibitem{Broker-Lauter-Streng}
Reinier Br\"oker, Kristin Lauter, and Marco Streng.
\newblock Abelian surfaces admitting an {$(l,l)$}-endomorphism.
\newblock {\em J. Algebra}, 394:374--396, 2013.
\newblock \url{https://arxiv.org/abs/1106.1884}.

\bibitem{SGA3ExposeII}
Michel Demazure.
\newblock Fibr\'es tangents, alg\`ebres de {L}ie.
\newblock In {\em Sch\'emas en {G}roupes ({S}\'em. {G}\'eom\'etrie
  {A}lg\'ebrique, {I}nst. {H}autes \'{E}tudes {S}ci., 1963), {F}asc. 1,
  {E}xpos\'e 2}, page~40. Inst. Hautes \'Etudes Sci., Paris, 1963.

\bibitem{GHKRW}
Pierrick Gaudry, Thomas Houtmann, David Kohel, Christophe Ritzenthaler, and
  Annegret Weng.
\newblock The 2-adic {CM} method for genus 2 curves with application to
  cryptography.
\newblock In {\em Advances in cryptology---{ASIACRYPT} 2006}, volume 4284 of
  {\em Lecture Notes in Comput. Sci.}, pages 114--129. Springer, Berlin, 2006.

\bibitem{GorenLauter}
Eyal~Z. Goren and Kristin~E. Lauter.
\newblock Class invariants for quartic {CM} fields.
\newblock {\em Ann. Inst. Fourier (Grenoble)}, 57(2):457--480, 2007.

\bibitem{GorenLauter2}
Eyal~Z. Goren and Kristin~E. Lauter.
\newblock Genus 2 curves with complex multiplication.
\newblock {\em Int. Math. Res. Not. IMRN}, (5):1068--1142, 2012.

\bibitem{HabeggerPazuki}
Philipp Habegger and Fabien Pazuki.
\newblock Bad reduction of genus 2 curves with {CM} jacobian varieties.
\newblock {\em Compos. Math.}, 153(12):2534--2576, 2017.

\bibitem{KLLRSS}
P{\i}nar K{\i}l{\i}\c{c}er, Hugo Labrande, Reynald Lercier, Christophe
  Ritzenthaler, Jeroen Sijsling, and Marco Streng.
\newblock Plane quartics over {$\Bbb{Q}$} with complex multiplication.
\newblock {\em Acta Arith.}, 185(2):127--156, 2018.

\bibitem{KLS}
P{\i}nar K{\i}l{\i}\c{c}er, Elisa Lorenzo~Garc\'ia, and Marco Streng.
\newblock Primes dividing invariants of {CM} {P}icard curves.
\newblock {\em Canadian Journal of Mathematics}, (Published online), 2018.
\newblock \url{https://doi.org/10.4153/S0008414X18000111}.

\bibitem{KoikeWeng}
Kenji Koike and Annegret Weng.
\newblock Construction of {CM} {P}icard curves.
\newblock {\em Math. Comp.}, 74(249):499--518 (electronic), 2005.

\bibitem{LarioSomoza}
Joan-Carles Lario and Anna Somoza.
\newblock A note on {P}icard curves of {CM}-type.
\newblock \href{https://arxiv.org/abs/1611.02582}{arXiv:1611.02582}, 2016.

\bibitem{LauterViray}
Kristin Lauter and Bianca Viray.
\newblock An arithmetic intersection formula for denominators of {I}gusa class
  polynomials.
\newblock {\em Amer. J. Math.}, 137(2):497--533, 2015.

\bibitem{LLLR}
Reynald Lercier, Qing Liu, Elisa Lorenzo~Garc\'ia, and Christophe Ritzenthaler.
\newblock Reduction type of smooth quartics.
\newblock Preprint, \href{https://arxiv.org/abs/1803.05816}{arXiv:1803.05816},
  2018.

\bibitem{Liu}
Qing Liu.
\newblock {\em Algebraic geometry and arithmetic curves}, volume~6 of {\em
  Oxford Graduate Texts in Mathematics}.
\newblock Oxford University Press, Oxford, 2002.
\newblock Translated from the French by Reinie Ern{\'e}, Oxford Science
  Publications.

\bibitem{Mumford}
David Mumford.
\newblock {\em Abelian varieties}, volume~5 of {\em Tata Institute of
  Fundamental Research Studies in Mathematics}.
\newblock Published for the Tata Institute of Fundamental Research, Bombay; by
  Hindustan Book Agency, New Delhi, 2008.
\newblock With appendices by C. P. Ramanujam and Yuri Manin, Corrected reprint
  of the second (1974) edition.

\bibitem{SerreTate}
Jean-Pierre Serre and John Tate.
\newblock Good reduction of abelian varieties.
\newblock {\em Ann. of Math. (2)}, 88:492--517, 1968.

\bibitem{Shioda}
Tetsuji Shioda.
\newblock On the graded ring of invariants of binary octavics.
\newblock {\em Amer. J. Math.}, 89:1022--1046, 1967.

\bibitem{Siegel}
Carl~Ludwig Siegel.
\newblock {\em Lectures on the geometry of numbers}.
\newblock Springer-Verlag, Berlin, 1989.
\newblock Notes by B. Friedman, Rewritten by Komaravolu Chandrasekharan with
  the assistance of Rudolf Suter, With a preface by Chandrasekharan.

\bibitem{StrengImplementation}
Marco Streng.
\newblock {RECIP} -- {RE}pository of {C}omplex mult{IP}lication {S}age{M}ath
  code.
\newblock \burl{http://pub.math.leidenuniv.nl/~strengtc/recip/}.

\bibitem{Streng}
Marco Streng.
\newblock Computing {I}gusa class polynomials.
\newblock {\em Math. Comp.}, 83(285):275--309, 2014.

\bibitem{Weng}
Annegret Weng.
\newblock A class of hyperelliptic {CM}-curves of genus three.
\newblock {\em J. Ramanujan Math. Soc.}, 16(4):339--372, 2001.

\end{thebibliography}
\end{document}